\newcommand{\cal}{\mathcal}
\def\epsilon{\varepsilon}
\def\phi{\varphi}
\def\hat{\widehat}
\newcommand{\Out}{\mbox{\rm Out}}
\newcommand{\Aut}{\mbox{\rm Aut}}
\newcommand{\FN}{F_N}   
\newcommand{\CVN}{\mbox{\rm CV}_N}
\newcommand{\CVNbar}{\overline{\mbox{\rm CV}}_N}
\newcommand{\Z}{\mathbb Z}
\def\strutdepth{\dp\strutbox}
\def \ss{\strut\vadjust{\kern-\strutdepth \sss}}
\def \sss{\vtop to \strutdepth{
\baselineskip\strutdepth\vss\llap{$\diamondsuit\;\;$}\null}}
\def\strutdepth{\dp\strutbox}
\def \sst{\strut\vadjust{\kern-\strutdepth \ssss}}
\def \ssss{\vtop to \strutdepth{
\baselineskip\strutdepth\vss\llap{$\spadesuit\;\;$}\null}}
\def\strutdepth{\dp\strutbox}
\def \ssh{\strut\vadjust{\kern-\strutdepth \sssh}}
\def \sssh{\vtop to \strutdepth{
\baselineskip\strutdepth\vss\llap{$\heartsuit\;\;$}\null}}
\def\qed{\hfill\rlap{$\sqcup$}$\sqcap$\par}
\def\bar{\overline}
\def\tilde{\widetilde}
\def\strutdepth{\dp\strutbox}
\def \ss{\strut\vadjust{\kern-\strutdepth \sss}}
\def \sss{\vtop to \strutdepth{
\baselineskip\strutdepth\vss\llap{$\diamondsuit\;\;$}\null}}
\def\strutdepth{\dp\strutbox}
\def \sst{\strut\vadjust{\kern-\strutdepth \ssss}}
\def \ssss{\vtop to \strutdepth{
\baselineskip\strutdepth\vss\llap{$\spadesuit\;\;$}\null}}
\def\qed{\hfill\rlap{$\sqcup$}$\sqcap$\par}
\newtheorem{thm}{Theorem}[section]
\newtheorem{cor}[thm]{Corollary}
\newtheorem{lem}[thm]{Lemma}
\newtheorem{prop}[thm]{Proposition}
\theoremstyle{definition}
\newtheorem{defn}[thm]{Definition}
\newtheorem{example}[thm]{Example}
\newtheorem{rem}[thm]{Remark}
\newtheorem{defn-rem}[thm]{Definition-Remark}
\theoremstyle{remark}
\numberwithin{equation}{section}
\begin{document} 
 
%
 

\author[M.~Lustig]{Martin Lustig}
\author[K.~Ye]{Kaidi Ye}
\address{\tt 
Aix Marseille Universit\'e, CNRS, Centrale Marseille, I2M UMR 7373,
13453  Marseille, France
}
\email{\tt Martin.Lustig@univ-amu.fr}
\email{\tt deloresye@gmail.com}

 
\title[Quadratic growth automorphisms of $\FN$]
{Normal form and  parabolic dynamics 
for quadratically growing automorphisms of free groups}


\begin{abstract} 
We present a normal form for outer automorphisms $\phi$ of a non-abelian free group $\FN$ 
which grow quadratically (measured through the maximal growth of conjugacy classes in $\FN$ under iteration of $\phi$). In analogy to the known normal form for 
linearly
growing automorphisms as {\em efficient Dehn twist}, our normal form for $\phi$ is given in terms of a 2-level 
Dehn twist on a 
graph-of-groups $\cal G$ with $\pi_1 \cal G \cong  \FN$, where a conjugacy class of $\FN$ grows at most linearly if and only if it is contained in a vertex group of $\cal G$. 

Our proof is based on earlier work of the second author \cite{Ye01, Ye02, Ye03} and on a new cancellation result, which also allows us to show that the dynamics of the induced $\phi$-action on Outer space $\CVN$ consists entirely of parabolic orbits, with limit points all assembled in the simplex $\Delta_\cal G \subset \partial \CVN$ determined by  $\cal G$.
\end{abstract}

\subjclass[2000]{Primary 20F, Secondary 20E, 57M}
 
\keywords{
Dehn twist, free group automorphism, quadratic growth, graph of groups}

\maketitle 

\section{Introduction}

Outer automorphisms of a free group $\FN$ of finite rank $N \geq 2$ have 
received 
a lot of attention in the past 30 years, since the groundbreaking papers of Culler-Vogtmann \cite{CV} and Bestvina-Handel \cite{BH92}. Much progress has been obtained, in particular from the attempt to mimic important known features from mapping classes and from the action of the mapping class group 
$\text{Mod}_g$ on Teichm\"uller space $\cal T_g$. However, in many aspects automorphisms of free groups can be intrinsically more complicated than mapping classes, and the group $\Out(\FN)$ is less tractable and 
more mysterious 
than $\text{Mod}_g$. In addition, the natural analogue of $\cal T_g$ with its canonical $\text{Mod}_g$-action, namely Outer space $\CVN$ equipped with a canonical $\Out(\FN)$-action, is not a manifold and hence immune towards all attempts 
to mimic 
directly 
the well developed analytic theory for $\cal T_g$.

\smallskip

One of the most obvious differences to $\text{Mod}_g$ is that $\Out(F_N)$ contains elements which grow polynomially  
of degree $d \geq 2$.
Here the growth function $\text{Gr}_\phi(t)$ of an outer automorphism $\phi \in \Out(\FN)$ is given by considering, for any element $w \in \FN$, the function
$\|\phi^t([w])\|_\cal A$, 
where $\cal A$ is any basis of $\FN$, and $\|[u]\|_\cal A$ denotes the length of a cyclically reduced word 
in
$\cal A^{\pm1}$ which represents the conjugacy class $[u] \subset \FN$.
The choice of $\cal A$ is immaterial if one is only interested in the type of the  function $\text{Gr}_\phi(t)$, which is taken as maximum 
over the above functions, for any $w \in \FN$. 
It is 
a well known consequence of \cite{BH92} 
that $\text{Gr}_\phi(t)$ is either an exponential function with growth rate given by a Perron-Frobenius transition matrix derived from $\phi$, or else $\text{Gr}_\phi(t)$ must be a polynomial 
of some degree
$d \geq 0$.

Exponentially growing automorphisms $\phi \in \Out(\FN)$ have received on the whole more attention than polynomially growing ones, as they occur more frequently. In addition, polynomially growing such $\phi$ are technically often harder to deal with, since there is less rigidity in the intrinsic structure of such automorphisms. Nevertheless, the only type of automorphism of $\FN$ for which a normal from was available so far are linearly growing automorphisms: 

In two joint papers \cite{CL95, CL99} with Marshall Cohen the first author derived this normal form by exhibiting, for any linearly growing $\phi \in \Out(\FN)$, a certain 
type of graph-of-groups $\cal G$ with a marking isomorphism $\pi_1 \cal G \cong \FN$, together with a graph-of-groups automorphism $D: \cal G \to \cal G$, called an {\em efficient Dehn twist} 
(see section 
\ref{2.4}).  
The map $D$ 
induces for some integer $m \geq 1$ the outer automorphism $\hat D = \phi^m$ on $\pi_1 \cal G$, 
and as such is unique up to graph-of-groups isomorphisms.
It was later shown in joint work with S. Krstic and K. Vogtmann \cite{KLV01} that $\phi$ itself is also induced by a graph-of-groups automorphism $R: \cal G \to \cal G$ with $R^m = D$.

\smallskip

In this paper the authors follow very much the same strategy for quadratically growing automorphisms: We define {\em efficient 2-level Dehn twists} as 
graph-of-groups
automorphisms $H: \cal G \to \cal G$, but while for the above $D$ the induced {\em local automorphisms} on the vertex groups are the identity, the vertex group automorphisms of $H$ are themselves given through efficient Dehn twists. 
The conditions imposed by our Definition \ref{2-level-efficient} ensure that $\hat H$ has always quadratic growth. 

It 
follows from the results of 
\cite{Ye02, Ye03} that every quadratically growing $\phi \in \Out(\FN)$
has a positive power which
can be represented by some 2-level Dehn twist, and through suitable modifications 
(see section \ref{sec-3}) the latter can be made efficient. Much harder is the question about uniqueness, and again we follow closely the method employed by \cite{CL95, CL99}: There it has been shown that every Dehn twist automorphism induces on $\CVN$ an action with parabolic orbits, where the limit point of any orbit is contained in 
a 
simplex $\Delta_\cal G$ in the boundary $\partial \CVN$, which in turn is given by varying the edge lengths of the graph $\cal G$ on which the efficient Dehn twist $D$ had been defined. 

Exactly the same is shown in our Theorem \ref{parabolic-dynamics}, quoted here for simplicity without the detailed information about the limit points. However, it should be noted that the proof in the quadratic growth case is substantially harder than in the linear 
case: We need to employ the concept of {\em $H$-conjugacy} (see section \ref{2.3}) to the {\em correction terms} of the edges of $\cal G$ (see Definition \ref{gog-iso}), and 
the proof given here uses crucially a growth result about the $H$-conjugacy classes of those correction terms, proved previously by the second author in \cite{Ye02}.

\begin{thm}
\label{intro-parabolic-dynamics}
Let 
$[\Gamma]$
be any point in Outer space $\CVN$, given by a 
marked metric graph $\Gamma$.
Then for any automorphism $\phi \in \Out(\FN)$, 
represented by an efficient 2-level Dehn twist 
$H: \cal G \to \cal G$, 
the $\phi$-orbit of 
$[\Gamma]$ 
is parabolic, with 
limit point 
contained 
in the 
interior of the 
simplex $\Delta_\cal G \subset \partial\CVN$.
\end{thm}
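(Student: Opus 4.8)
The plan is to translate the convergence statement into the language of translation length functions and then to compute the leading asymptotics of these functions under iteration of $H$. Let $T=\tilde\Gamma$ denote the $\R$-tree obtained as the universal cover of $\Gamma$, equipped with the lifted metric and the $\FN$-action determined by the marking, and recall that a point of $\CVN$ (or of its closure $\CVNbar$) is determined by its translation length function $w\mapsto\ell_T(w)$, with convergence in $\partial\CVN$ being projective convergence of such functions. Since the orbit point $\phi^n\cdot[\Gamma]$ has length function $w\mapsto\ell_T(\hat H^n(w))$, the theorem reduces to showing that the normalized functions $\tfrac{1}{n^2}\ell_T(\hat H^n(w))$ converge, as $n\to\pm\infty$, to one and the same function $c$, and that $c$ is the length function of a tree lying in the interior of $\Delta_\cal G$.

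First I would expand $\hat H^n(w)$ along the graph-of-groups structure of $\cal G$. Writing the reduced $\cal G$-path representing $[w]$, each application of $H$ inserts at every traversed edge $e$ the associated correction term $z_e$ (Definition \ref{gog-iso}), while simultaneously applying the vertex-group Dehn twist $\sigma_v$ to the portions of the path lying in the vertex groups. Iterating $n$ times therefore produces, at each crossing of $e$, the product $z_e\,\sigma_v(z_e)\,\sigma_v^2(z_e)\cdots$ of the successively twisted correction terms. Here I would invoke the growth result of \cite{Ye02} for the $H$-conjugacy classes of the correction terms (section \ref{2.3}): the term $\sigma_v^k(z_e)$ has $\Gamma$-length growing linearly in $k$, so the accumulated product over $n$ steps has $\Gamma$-length $L_e\,n^2+O(n)$ for a constant $L_e=L_e(\Gamma)\geq 0$. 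Summing over all edge crossings gives
\[
\ell_T(\hat H^n(w)) = c(w)\, n^2 + O(n), \qquad c(w)=\sum_e N_e(w)\, L_e,
\]
where $N_e(w)\geq 0$ is the number of crossings of the edge $e$ by the reduced $\cal G$-path of $[w]$.

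The main obstacle is precisely the passage from an upper bound to this \emph{sharp} quadratic estimate, i.e.\ the control of cancellation. A priori the successively twisted correction terms inserted at the various edge crossings could cancel against one another, collapsing the expected $n^2$-growth to something of lower order; ruling this out is exactly the role of the new cancellation result announced in the introduction. Working $H$-conjugacy class by $H$-conjugacy class (section \ref{2.3}), one shows that the correction-term contributions interact with the edge crossings without cancellation beyond a bounded error, so that the coefficient $c(w)$ of $n^2$ is genuinely as displayed above.

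It remains to identify the limit and verify the two asserted properties. The function $c$ vanishes on every conjugacy class carried by a vertex group, since such classes grow at most linearly and are therefore killed by the normalization by $n^2$; on all other classes it is the length function of the tree $T_\infty\in\Delta_\cal G$ whose edge lengths are the coefficients $L_e$, because $c(w)=\sum_e N_e(w)L_e$ is exactly the formula for translation length in such a tree. Since the efficiency conditions of Definition \ref{2-level-efficient} force every correction term to grow genuinely linearly under $\sigma_v$, every coefficient $L_e$ is strictly positive, and hence $T_\infty$ lies in the \emph{interior} of $\Delta_\cal G$. Finally, applying the same computation to $H^{-1}$, which is again an efficient $2$-level Dehn twist on the same $\cal G$ with correction terms of the same linear growth rates, yields the identical limit function $c$ as $n\to-\infty$, since the quadratic growth depends only on the magnitude of the accumulated twisting and not on its sign. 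Thus the forward and backward iterates of $[\Gamma]$ converge to the same point of $\Delta_\cal G$, which is the definition of a parabolic orbit.
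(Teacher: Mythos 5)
Your proposal follows essentially the same route as the paper's own proof: reduce the statement to convergence of normalized translation length functions, compute $\hat H^t(w)$ syllable-by-syllable along the graph-of-groups so that each edge crossing contributes an iterated product of correction terms, invoke the quadratic growth result of \cite{Ye02} (Proposition \ref{quadratic-limit}) for these products, control the inter-syllable cancellation via the new cancellation result together with the efficiency conditions of Definition \ref{2-level-efficient} (this is exactly the role of Propositions \ref{bounded-cancellation} and \ref{limit-bounded}, feeding into Proposition \ref{conjugacy-limit-growth}), and identify the limit as the point of $\Delta_{\cal G}$ whose edge lengths are the $D$-lengths $\|[\delta(E)]_{D_{\tau(E)}}\|_{\tilde\Gamma}$, which are strictly positive for forward-oriented edges. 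Your treatment of $t \to -\infty$ via passing to $H^{-1}$ also matches the paper's mechanism, namely the symmetry $\|[W^{-1}]_{D^{-1}}\| = \|[W]_{D}\|$ of Remark \ref{D-length}.
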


A first convergence result for the action of quadratically (or higher-degree polynomially) growing automorphism $\phi \in \Out(\FN)$ on $\CVNbar = \CVN \cup \partial \CVN$ has been obtained by M. Bestvina, M. Feighn and M. Handel's in Theorem 1.4 of \cite{BFH05}.

From 
Theorem \ref{intro-parabolic-dynamics} 
we derive the desired uniqueness, thus justifying our terminology ``normal form'' (see Theorem \ref{normal-form}).
Related results have been obtained by M. Rodenhausen in \cite{R} (see Remarks \ref{Rod1} and \ref{Rod2} below).

\begin{thm}
\label{intro-normal-form}
Two efficient 2-level Dehn twists $H:\cal G \to \cal G$ and $H':\cal G' \to \cal G'$ represent outer automorphisms $\hat H$ and $\hat H'$ of a free group $\FN$ which are conjugate in $\Out(\FN)$ if and only if there exists a graph-of-groups isomorphism $F: \cal G \to \cal G'$ which satisfies:
$$\hat H = \hat F^{-1} \hat H' \hat F$$
\end{thm}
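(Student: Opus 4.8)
The plan is to prove the two implications separately, the forward one being essentially formal and the converse resting entirely on the parabolic dynamics of Theorem \ref{intro-parabolic-dynamics}. For the ``if'' direction I would first recall that a graph-of-groups isomorphism $F \colon \cal G \to \cal G'$ induces, via the two marking isomorphisms $\pi_1 \cal G \cong \FN \cong \pi_1 \cal G'$, a well-defined outer automorphism $\hat F \in \Out(\FN)$. Granting this, the displayed equation $\hat H = \hat F^{-1} \hat H' \hat F$ already exhibits $\hat H$ and $\hat H'$ as conjugate elements of $\Out(\FN)$, so nothing further is needed. The only point requiring care is that $\hat F$ is genuinely well defined at the level of outer automorphisms, which is standard Bass--Serre bookkeeping.

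For the ``only if'' direction, suppose $\hat H' = \psi\,\hat H\,\psi^{-1}$ for some $\psi \in \Out(\FN)$. Since $\psi$ acts as a homeomorphism of $\CVNbar = \CVN \cup \partial\CVN$ and conjugates the $\hat H$-action to the $\hat H'$-action, the identity $\psi\,\hat H^n = (\hat H')^n\,\psi$ shows that $\psi$ carries each $\hat H$-orbit $(\hat H^n [\Gamma])_n$ to the $\hat H'$-orbit $((\hat H')^n \psi[\Gamma])_n$, and hence, by continuity, carries the limit point of the former to the limit point of the latter. By Theorem \ref{intro-parabolic-dynamics} these limit points lie in the interiors of $\Delta_{\cal G}$ and $\Delta_{\cal G'}$ respectively; invoking the detailed form of that theorem (with its explicit description of the limit points) I would argue that, as $[\Gamma]$ ranges over $\CVN$, the resulting limit points fill a dense subset of the interior of $\Delta_{\cal G}$, and symmetrically for $\cal G'$. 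Because $\psi$ is a homeomorphism and $\psi[\Gamma]$ ranges over all of $\CVN$ as $[\Gamma]$ does, it then follows that $\psi$ maps $\overline{\Delta_{\cal G}}$ homeomorphically onto $\overline{\Delta_{\cal G'}}$, carrying interior to interior.

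It remains to convert this into a graph-of-groups isomorphism. Each interior point of $\Delta_{\cal G}$ is the projective class of the Bass--Serre tree $T_{\cal G}$ of $\cal G$ with strictly positive edge lengths, and by Bass--Serre theory the $\FN$-tree $T_{\cal G}$ recovers $\cal G$ (with its marking) as the quotient graph-of-groups. The equality $\psi(\Delta_{\cal G}) = \Delta_{\cal G'}$ therefore supplies, for a chosen interior point, an $\FN$-equivariant homothety $T_{\cal G} \to T_{\cal G'}$ realizing $\psi$; passing to quotients yields a graph-of-groups isomorphism $F \colon \cal G \to \cal G'$ with $\hat F = \psi$. Substituting $\hat F = \psi$ into $\hat H' = \psi\,\hat H\,\psi^{-1}$ gives exactly $\hat H = \hat F^{-1}\hat H' \hat F$, as required.

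I expect the main obstacle to be precisely this last conversion step. One must ensure that the abstract homeomorphism $\psi$ respects enough of the simplicial and tree structure to descend to an honest graph-of-groups isomorphism whose induced outer automorphism is $\psi$ on the nose, rather than $\psi$ modified by some element stabilizing $\Delta_{\cal G'}$ and acting trivially on the simplex. Pinning this down will require the detailed limit-point information suppressed in the introductory statement, together with the rigidity of the simplex $\Delta_{\cal G}$ inside $\partial\CVN$ --- namely that a single interior tree, with its $\FN$-action, determines the full graph-of-groups datum of $\cal G$, including the two-level structure through which $H$ and $H'$ are defined.
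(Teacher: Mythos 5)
Your overall strategy is the paper's (formal ``if'' direction; for ``only if'', the conjugator $\psi$ carries $\hat H$-orbits in $\CVN$ to $\hat H'$-orbits and hence limit points to limit points, after which one descends to a graph-of-groups isomorphism via Bass--Serre theory), but your middle step contains a genuine error: the claim that, as $[\Gamma]$ ranges over $\CVN$, the limit points fill a dense subset of the interior of $\Delta_{\cal G}$. This is both unproven and in general false. By Theorem \ref{parabolic-dynamics} the limit point attached to $[\Gamma]$ has coordinates $\|[\delta(E)]_{D_{\tau(E)}}\|_{\tilde \Gamma}$, and by Remark \ref{D-length} each such coordinate equals $\frac{1}{2}\sum_i \|z_{e_i}\|_{\tilde \Gamma}$, a sum of translation lengths of the local twistors crossed by the $D$-reduced representative of the correction term. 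Efficiency (Definition \ref{2-level-efficient}) only forbids $D_V^{-1}$-conjugate correction terms on distinct forward-oriented edges at a \emph{common} vertex; it does not prevent the correction terms of two edges from crossing the same multiset of twistors (distinct twisted-conjugacy classes can have the same underlying local path, and edges with distinct terminal vertices are entirely unconstrained). Whenever this happens, the two corresponding coordinates agree for \emph{every} $[\Gamma] \in \CVN$, so all limit points lie in a proper sub-simplex of $\Delta_{\cal G}$ and are nowhere dense in its interior.

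Fortunately the density claim is a detour rather than a load-bearing step. The paper obtains $\psi(\Delta_{\cal G}) = \Delta_{\cal G'}$ from a \emph{single} limit point: since the $\Out(\FN)$-action on $\CVNbar$ is simplicial, $\psi(\Delta_{\cal G})$ is again a simplex, and it shares with $\Delta_{\cal G'}$ a point interior to both, so Remark \ref{interior-points} forces the two simplices to coincide. Better still, your own closing paragraph needs only that \emph{one} interior point of $\Delta_{\cal G}$ is sent to an interior point of $\Delta_{\cal G'}$ --- which the limit-point argument already gives you directly --- because an interior point carries the non-metric Bass--Serre tree $T_{\cal G}$, and the resulting equivariant homothety conjugating the ($\psi$-twisted) actions descends to a graph-of-groups isomorphism $F$ with $\hat F = \psi$. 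The paper makes this concrete by noting that $\psi$ must send the center point of $\Delta_{\cal G}$ (all edge lengths $1$) to the center point of $\Delta_{\cal G'}$, and it disposes of the descent step you flagged as the ``main obstacle'' by citing a standard fact (Lemma 4.5 of \cite{CL99}, Proposition 4.4 of \cite{Bass93}), so no extra rigidity input is needed there. Deleting your density paragraph and substituting either of these observations turns your proposal into the paper's proof.
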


It  turns out that the extension of the above normal form to {roots of 2-level Dehn twists} 
is easier than in the linear case, since contrary to that case, for 2-level Dehn twists $H: \cal G \to \cal G$ the edge groups of $\cal G$ are trivial. We obtain (see Theorem \ref{roots-normal-form}):

\begin{cor}
\label{intro-roots-normal-form}
(1)
Every automorphism $\phi \in \Out(\FN)$ with exponent 
$m \geq 1$, 
such that $\phi^m$ is represented by an efficient 2-level Dehn twist $H: \cal G \to \cal G$, can be represented by a graph-of-groups automorphism $R: \cal G \to \cal G$.

\smallskip
\noindent
(2)
Two graph-of-groups automorphism $R: \cal G \to \cal G$ and $R': \cal G' \to \cal R'$ as in part (1) represent outer automorphisms $\hat R$ and $\hat R'$ of a free group $\FN$ which are conjugate in $\Out(\FN)$ if and only if there exists a graph-of-groups isomorphism $F: \cal G \to \cal G'$ which satisfies:
$$\hat R = \hat F^{-1} \hat R' \hat F$$
\end{cor}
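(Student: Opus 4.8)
The plan is to reduce both parts to a single structural fact, which I would isolate as a \emph{centralizer lemma}: if $\psi \in \Out(\FN)$ commutes with the outer automorphism $\hat H$ induced by an efficient 2-level Dehn twist $H:\cal G\to\cal G$, then $\psi$ is itself induced by a graph-of-groups automorphism $G:\cal G\to\cal G$. I would prove this by exploiting the parabolic dynamics of Theorem \ref{intro-parabolic-dynamics}: that theorem attaches to $\hat H$ the limit simplex $\Delta_\cal G\subset\partial\CVN$, into whose interior every $\hat H$-orbit converges, so that $\Delta_\cal G$ is determined by $\hat H$ alone. Since $\psi\hat H\psi^{-1}=\hat H$, the homeomorphism of $\partial\CVN$ induced by $\psi$ carries the $\hat H$-limit set to the $\hat H$-limit set, and hence $\psi(\Delta_\cal G)=\Delta_\cal G$. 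An element of $\Out(\FN)$ preserving the simplex $\Delta_\cal G$ preserves the associated graph-of-groups decomposition of $\FN$ up to marking, and by Bass-Serre theory is therefore realized by a graph-of-groups automorphism of $\cal G$; here the triviality of the edge groups of $\cal G$ makes this realization step clean, as there is no edge-group data to be matched.

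Part (1) is then immediate: since $\phi$ commutes with its own power $\phi^m=\hat H$, the centralizer lemma yields a graph-of-groups automorphism $R:\cal G\to\cal G$ with $\hat R=\phi$.

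For part (2), the ``if'' direction is trivial, since $\hat R=\hat F^{-1}\hat R'\hat F$ exhibits the conjugacy directly. For the ``only if'' direction I would argue as follows. Suppose $\hat R=\chi\hat R'\chi^{-1}$ for some $\chi\in\Out(\FN)$. Raising to the $m$-th power gives $\hat H=\chi\hat H'\chi^{-1}$, so by the uniqueness Theorem \ref{intro-normal-form} there is a graph-of-groups isomorphism $F:\cal G\to\cal G'$ with $\hat H=\hat F^{-1}\hat H'\hat F$. A direct computation then shows that $\eta:=\hat F\chi$ commutes with $\hat H'$, since $\eta\hat H'\eta^{-1}=\hat F(\chi\hat H'\chi^{-1})\hat F^{-1}=\hat F\hat H\hat F^{-1}=\hat H'$. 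Applying the centralizer lemma over $\cal G'$ produces a graph-of-groups automorphism $G:\cal G'\to\cal G'$ with $\hat G=\eta$, whence $\hat F\hat R\hat F^{-1}=\eta\hat R'\eta^{-1}=\hat G\hat R'\hat G^{-1}$. Setting $\tilde F:=G^{-1}\circ F:\cal G\to\cal G'$ then gives a graph-of-groups isomorphism with $\hat R=\hat{\tilde F}^{-1}\hat R'\hat{\tilde F}$, as required.

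The hard part will be the centralizer lemma, and within it the step that a $\psi$ commuting with $\hat H$ genuinely preserves $\Delta_\cal G$ setwise: this needs the full strength of the limit-point information behind Theorem \ref{intro-parabolic-dynamics}, in particular that the $\hat H$-limit points are abundant enough in the interior of $\Delta_\cal G$ to pin the simplex down canonically, rather than merely that each individual orbit converges into it. The subsequent realization step --- passing from an outer automorphism preserving $\Delta_\cal G$ to an actual graph-of-groups automorphism --- is precisely where the contrast with the linear case appears: for honest Dehn twists the nontrivial edge groups force a delicate extraction of roots inside the edge groups, whereas here the edge groups of $\cal G$ are trivial and this obstruction disappears, which is exactly what makes the roots statement easier in the quadratic setting.
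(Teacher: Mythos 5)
Your part (1) and your ``centralizer lemma'' are in substance exactly the paper's argument: the paper also observes that $\phi$ permutes the $\hat H$-orbits in $\CVN$, hence their limit points, which by Theorem \ref{parabolic-dynamics} lie in the interior of $\Delta_{\cal G}$, so that Remark \ref{interior-points} forces $\phi(\Delta_{\cal G}) = \Delta_{\cal G}$; then $\phi$ fixes the center point of the simplex, conjugates the $\FN$-action on the non-metric Bass--Serre tree $T_{\cal G}$ to itself, and is therefore induced by a graph-of-groups automorphism (Bass, or Lemma 4.5 of \cite{CL99}). One small remark: you worry about the limit points being ``abundant enough'' in the interior, but a single interior limit point already pins down the simplex, precisely because Remark \ref{interior-points} says two such simplices that share a point interior to both must coincide.

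In part (2), however, you take a genuinely different route, and it has a gap. The automorphisms $R$ and $R'$ ``as in part (1)'' come with exponents $m$ and $m'$ that need not be equal: $\hat R^m = \hat H$ and $\hat R'^{m'} = \hat H'$. Your step ``raising to the $m$-th power gives $\hat H = \chi \hat H' \chi^{-1}$'' silently assumes $m = m'$. With distinct exponents the best you get by raising to the power $mm'$ is $\hat H^{m'} = \chi \hat H'^{m} \chi^{-1}$, and you can no longer invoke Theorem \ref{normal-form}, whose hypotheses require both sides to be represented by \emph{efficient} 2-level Dehn twists: it is not established (in the paper or in your proposal) that a positive power of an efficient 2-level Dehn twist is again one --- the efficiency conditions on the correction terms of $H^{t}$ involve $D_V^{-1}$-conjugacy of iterated products and would need a separate argument. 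The paper sidesteps all of this by applying the dynamical argument directly to the conjugator $\psi$ rather than to the powers: since $\hat R$-orbits accumulate only on finitely many points, all interior to $\Delta_{\cal G}$ (they are limit points of $\hat H$-orbits), and likewise for $\hat R'$, the conjugacy $\hat R = \psi^{-1}\hat R'\psi$ forces $\psi(\Delta_{\cal G}) = \Delta_{\cal G'}$ by Remark \ref{interior-points}; then $\psi$ maps center to center, is realized as $\psi = \hat F$ for a graph-of-groups isomorphism $F: \cal G \to \cal G'$, and $\hat R = \hat F^{-1}\hat R'\hat F$ follows at once. This repair is available to you with no new tools --- it is just your centralizer-lemma mechanism applied to $\psi$ itself --- and it also eliminates the roundabout correction step with $\eta = \hat F\chi$ and $G$. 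Your closing observation about why the root statement is easier here than in the linear case (trivial edge groups of $\cal G$) matches the paper's own explanation.
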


There are a number of obvious algorithmic questions issuing 
from 
the above results; they will be answered in the forthcoming joint work \cite{LY2}.


\medskip

\noindent
{\em Acknowledgements:} Both authors would like to thank Arnaud Hilion for several helpful discussions.


\section{Preliminaries}
\label{prelims}
\subsection{Graphs-of-groups and their isomorphisms}
\label{2.1}

${}^{}$


\smallskip

In this subsection we set up the basic notation while recalling some fundamental facts about graph-of-groups and their isomorphisms. For more details on graphs-of-groups we refer the reader to \cite{Bass93, CL99, Levitt,  Serre80}.

\smallskip

Unless otherwise stated, a {graph} $\Gamma$ in this paper is finite, non-empty and connected. 
We denote the {vertex set} of $\Gamma$ by $V(\Gamma)$ and the set of oriented { edges by $E(\Gamma)$. For any edge $e$ in $E(\Gamma)$ we denote by $\tau(e)$ its {terminal vertex}, and by $\bar{e}$ the edge with reversed orientation. Hence the initial vertex of $e$ is given by $\tau(\bar e)$.

Our graph $\Gamma$ is non-oriented, but one can always choose an {\it orientation} of $\Gamma$, given as subset $E^{+}(\Gamma) \subset E(\Gamma)$ such that $E^{+}(\Gamma) \cup \bar{E}^{+}(\Gamma)=E(\Gamma)$ and $E^{+}(\Gamma) \cap \bar{E}^{+}(\Gamma)=\emptyset$, where $\bar{E}^{+}(\Gamma)=\{\bar{e} \mid e \in E^{+}(\Gamma)\}$.

\begin{defn}
A {\it graph-of-groups} $\cal{G}$ is given by the following data: A graph $\Gamma$, a {\it vertex group} $G_v$ for each $v \in V(\Gamma)$, an {\it edge group} $G_e$ for each $e \in E(\Gamma)$, with $G_e = G_{\bar{e}}$, and an injective {\it edge homomorphism} $f_e: G_e \rightarrow G_{\tau(e)}$ for every edge $e$ of $\Gamma$.
\end{defn}

Given a graph-of-groups $\cal{G}$, we usually denote its underlying graph by $\Gamma(\cal{G})$, while the vertex set and edge set of $\Gamma(\cal{G})$ are denoted by $V(\cal{G})$ and $E(\cal{G})$ respectively.
To each edge $e \in E(\cal{G})$ we abstractly associate a {\it stable letter}  $t_e$.

If the underlying graph $\Gamma(\cal G)$ consists of a single vertex only, then the graph-of-groups $\cal G$ is sometimes called {\em trivial}.

\begin{defn}
(1)
For any graph-of-groups $\cal{G}$ the {\it word group} $W(\cal{G})$ is defined to be the free product of the vertex groups and the free group generated by all stable letters:
$$W(\cal{G})= \big (\underset{v \in V(\Gamma)}{\operatorname{\ast}}G_{v}\big )*F(\{t_{e} \mid e\in E(\Gamma)\})$$

\smallskip
\noindent
(2)
The {\it path group} $\Pi(\cal{G})$ is defined to be the quotient of $W(\cal{G})$ modulo the relations $t_{\bar{e}}=t_{e}^{-1}$ and $f_{\bar{e}}(g)=t_{e}f_{e}(g)t_{e}^{-1}$ for all $e \in E(\cal{G})$ and $g \in G_{e}$.
\end{defn}

\begin{rem}
\label{gog-normal-form}
(1)
Since $\Pi(\cal G)$ is vastly more important than $W(\cal G)$, any {\em word} $W  = r_{0}t_{1}r_{1}...r_{q-1}t_{q}r_{q}$, with $t_{i} = t_{e_i}$ for some $e_i \in E(\cal{G})$ and $r_{i} \in\underset{v \in V(\Gamma)}{\operatorname{\ast}}G_{v}$, though formally an element in $W(\cal G)$, will always be understood as element in $\Pi(\cal G)$ (unless explicitly stated otherwise). In particular, if $W'$ is a second such word, then $W = W'$ means that they are equal in $\Pi(\cal G)$.

\smallskip
\noindent
(2) This is justified by the following ``normal form'' in $\Pi(\cal G)$:

Let $W  = r_{0}t_{1}r_{1}...r_{q-1}t_{q}r_{q}$ and 
$W'  = r'_{0}t'_{1}r'_{1}...r'_{q'-1}t'_{q'}r'_{q'}$
be two words in $W(\cal G)$. Then $W$ and $W'$ define the same element in $\Pi(\cal G)$ if and only if $q = q'$, and if for any 
$k = 1, \ldots, q$
one has $t'_k = t_k$  and there exist elements $g_k \in G_{e_k}$ such that the equalities $r'_{k} = f_{e_{k}}(g_{k}) r_{k} f_{\bar e_{k+1}}(g_{k+1})$ for $k \neq q$ as well as $r'_{0} =  r_{0} f_{\bar e_{1}}(g_{1})$ and $r'_{q} = f_{e_{q}}(g_{q}) r_{q}$ hold.

\smallskip
\noindent
(3) 
As a consequence, the
{\it path length} 
(or {\it $\cal{G}$-length}) 
of any word $W  = r_{0}t_{1}r_{1}...r_{q-1}t_{q}r_{q}$, given by 
$$|W|_{\cal{G}} = q \, ,$$
is a well defined notion in $\Pi(\cal G)$.
If the context is unambiguous, we sometimes write $| W | $ for $ | W |_\cal{G}$.
\end{rem}

\begin{defn}
\label{reduced-word}
A word $W  = r_{0}t_{1}r_{1}...r_{q-1}t_{q}r_{q}$ in $W(\cal G)$
%
 is said to be 
\begin{enumerate}
\item {\it connected} if the sequence $e_1 e_2 ...e_q$ 
(for $t_k = t_{e_k}$)
forms a connected path 
$\gamma$, and if $r_{0} \in G_{\tau(\bar{e}_{1})}$ and $r_{i} \in G_{\tau(e_{i})}$ for $1 \leq i \leq q$.
In this case we write $\tau(W) := \tau(\gamma) := \tau(e_q)$, and call it the {\em terminal vertex} of the word $W$ or of the path $\gamma$.
Similarly, their {\em initial vertex} is given by $\tau(\bar e_1)$.

\item {\it closed connected} if it is connected and $\tau(\bar e_1)=\tau(e_q)$. 
In order to specify the initial vertex 
we sometimes call $W$ a {\it closed connected word issued at $\tau(\bar e_1)$}.
\item {\it reduced} if $q=0$ 
and $r_0 \neq 1$, 
%
or if, in case $q>0$, whenever $t_{i} = t_{i+1}^{-1}$ for some $1 \leq i \leq q-1$ we have $r_{i}\not\in f_{e_{i}}(G_{e_{i}})$.
\item {\it cyclically reduced}: if it is reduced and if, in case $q>0$ and $t_{1} = t_{q}^{-1}$, one has $r_{q}r_{0}\not\in f_{e_{q}}(G_{e_{q}})$.
\end{enumerate}
\end{defn}

It follows from Remark \ref{gog-normal-form} (2) that the terminology introduced in the last definition applies as well to the element in $\Pi(\cal G)$ defined by the word $W$ in $W(\cal G)$.

\begin{defn}
For any graph-of-groups $\cal{G}$ and any vertex $v \in V(\cal{G})$, we denote by $\pi_{1}(\cal{G}, v)$ the {\it fundamental group based at $v$}, which consists of all elements in $\Pi(\cal{G})$ that are represented by closed connected words issued at $v$.
\end{defn}
 
For distinct vertices $v_1, v_2 \in V(\cal{G})$, the fundamental groups $\pi_{1}(\cal{G}, v_1)$ and $\pi_1(\cal{G}, v_2)$ are conjugate in $\Pi(\cal{G})$. Sometimes, we write $\pi_1(\cal{G})$ when the base point does not make a difference.

\begin{defn}
\label{gog-iso}
Given two graphs-of-groups $\cal{G}_1$ and $\cal{G}_2$,
a {\it graph-of-groups isomorphism} $H: \cal{G}_1 \rightarrow \cal{G}_2$ consists of 
\begin{enumerate}
\item a graph isomorphism $H_{\Gamma}: \Gamma(\cal G_1) \rightarrow \Gamma(\cal G_2)$, 
\item a group ismorphism $H_{v}: G_{v} \rightarrow G_{H_{\Gamma}(v)}$ for each vertex $v \in V(\cal G_{1})$,
\item a group isomorphism $H_{e}=H_{\bar{e}}: G_{e} \rightarrow G_{H_{\Gamma}(e)}$ for each edge $e \in E(\cal G_{1})$, and
\item for every $e \in E(\cal G_{1})$ an element $\delta(e) = 
\delta_H(e) \in G_{\tau(H_{\Gamma}(e))}$, called the {\it correction term} for $e$, which satisfies:
\begin{equation}
\label{gog-isom-eq}
H_{\tau(e)}f_{e}=ad_{\delta(e)}f_{H_{\Gamma}(e)}H_{e}
\end{equation}
\end{enumerate}
Here and below we denote by $ad_{g}$ the inner automorphism  $x  \mapsto gxg^{-1}$.
\end{defn}

The isomorphism $H: \cal{G}_{1} \rightarrow \cal{G}_{2}$ induces an isomorphism $H_{*}: \Pi(\cal{G}_{1}) \rightarrow \Pi(\cal{G}_{2})$ defined on the generators by
\begin{enumerate}
\item[(a)] $H_{*}(g) = H_{v}(g)$ for any $g \in G_{v}$ and any $v \in V(\cal G_{1})$, and 
\item[(b)] $H_{*}(t_{e}) = \delta(\bar{e}) t_{H_{\Gamma}(e)} \delta(e)^{-1}$ for any $e \in E(\cal G_{1})$.
\end{enumerate}

For every $v \in V(\cal{G})$, the isomorphism $H_*$ induces an isomorphism $H_{*v}: \pi_1(\cal{G}_1, v) \rightarrow \pi_1(\cal{G}_2, H_{\Gamma}(v))$. 
%
We denote by 
$\hat{H}$
the {outer isomorphism} induced by $H_*$.
Here we use the terminology introduced in
\cite{CL99}:
for arbitrary 
groups $G_1$ and $G_2$
an {\it outer isomorphism} $\hat{f}$ 
is an equivalence class of isomorphism, where two isomorphism $f: G_1 \rightarrow G_2$ and $f': G_1 \rightarrow G_2$ are equivalent if 
there is 
an element $g \in G_2$ such that $f'= ad_{g} \circ f$ (for $ad_g$ as defined above).

\medskip

In the particular case where $\cal{G}_1 = \cal{G}_2$ the isomorphism $H$ is called a {\it graph-of-groups automorphism}. If furthermore the automorphism $H: \cal{G} \rightarrow \cal{G}$ induces the identity on the underlying graph $\Gamma(\cal{G})$, then the group isomorphisms for edge groups and vertex groups are all automorphisms. 

\begin{rem}
\label{long-formula}
(1)
Consider any group $G$ and any automorphism $ \Phi \in \Aut(G)$. As in \cite{Ye02}
(compare also \cite{R}), for any $g \in G$ and 
$s < t \in \Bbb{N}$,
we 
define 
{\it iterated products} 
$\Phi^{(t)}(g)$ and $\Phi^{(s,t)}(g)$, given by
$$\Phi^{(t)}(g) = g \Phi(g) \Phi^{2}(g) \cdots \Phi^{t-1}(g), $$
and
$$\Phi^{(s, t)}(g) =  \Phi^{s}(g) \Phi^{s+1}(g) \cdots \Phi^{t-1}(g).$$

\smallskip
\noindent
(2)
Note in particular that, in the case where $H: \cal{G} \rightarrow \cal{G}$ is a graph-of-groups isomorphism which induces the identity on the underlying graph $\Gamma(\cal{G})$, for any $t \in \Bbb{N}$ the iteration of $H_{*}$ on $t_{e}$ gives 
(via the well known formulae for the composition of graph-of-groups isomorphism, see Remark 2.10 of \cite{Ye01}):
\begin{align*}
H_{*}^{t}(t_e) 
& = H^{t-1}_{*}(\delta(\bar{e})) \ldots H_{*}(\delta(\bar{e}))\delta(\bar{e}) \cdot t_e \cdot \delta(e)^{-1} H_{*}(\delta(e)^{-1}) \ldots  H^{t-1}_{*}(\delta(e)^{-1}) \\
& = H^{(t)}_{*}(\delta(\bar{e})^{-1})^{-1}  \cdot t_e \cdot H^{(t)}_{*}(\delta(e)^{-1}). 
\end{align*}
\end{rem}




\subsection{Equivalences of graph-of-groups and their automorphisms}
\label{2.2}

${}^{}$

The following statements are well known:


\begin{lem}[Section~2.4 in \cite{Ye01}]
\label{gog-modif}
Let $\cal{G}$ and $\cal{G}^{\prime}$ be two graphs-of-groups which are identical everywhere, except that for some 
$e \in E(\cal{G})$ and $g \in G_{\tau(e)}$ one has $f^{\prime}_{e}=ad_{g} \circ f_{e}$.

Then there is a canonical graph-of-groups isomorphism $H_g: \cal G \to \cal G'$ which is the identity on the underlying graph, on all vertex and edge groups, and has all correction terms equal to $1$, except that $\delta(e) = g^{-1}$.
%
\qed
\end{lem}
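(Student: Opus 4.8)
The plan is to write down the four pieces of data demanded by Definition~\ref{gog-iso} and then verify the single compatibility equation~(\ref{gog-isom-eq}) one oriented edge at a time. Since $\cal G$ and $\cal G'$ have the same underlying graph $\Gamma$, the same vertex groups, and the same edge groups, I would take $H_\Gamma$ to be the identity of $\Gamma$, every vertex isomorphism $H_v$ and every edge isomorphism $H_{e'}$ to be the identity map, and set the correction terms to be $\delta(e') = 1$ for all oriented edges $e' \ne e$, with the single exception $\delta(e) = g\inv \in G_{\tau(e)}$. This data involves no arbitrary choices once $g$ is fixed, which is what ``canonical'' refers to.

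It then remains only to verify~(\ref{gog-isom-eq}). With all vertex and edge maps equal to the identity and $H_\Gamma = \mathrm{id}$, the equation $H_{\tau(e')} f_{e'} = ad_{\delta(e')} f'_{H_\Gamma(e')} H_{e'}$ reduces to $f_{e'} = ad_{\delta(e')} f'_{e'}$ for each oriented edge $e'$. For every $e' \notin \{e, \bar e\}$ nothing has been altered, so $f'_{e'} = f_{e'}$ and the choice $\delta(e') = 1$ makes the identity trivially true. The two orientations of the distinguished edge must be handled separately, since by hypothesis only $f_e$ has been modified while $f_{\bar e}$ is untouched: for $\bar e$ one again has $f'_{\bar e} = f_{\bar e}$ and $\delta(\bar e) = 1$, so the equation holds; for $e$ the value $\delta(e) = g\inv$ is precisely the one that works, as $ad_{g\inv} f'_e = ad_{g\inv} \circ ad_g \circ f_e = ad_{g\inv g} \circ f_e = f_e$.

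The only delicate point --- and the nearest thing to an obstacle in an otherwise routine argument --- is this asymmetry between $e$ and $\bar e$: twisting $f_e$ by the inner automorphism $ad_g$ forces a nontrivial correction term on the orientation $e$ alone, and one must check that it does not disturb the equation for $\bar e$. As a confirmation that $H_g$ is a bona fide graph-of-groups isomorphism I would also verify that the induced map $H_{g*}$ of (a)--(b) is compatible with the defining relations of the two path groups: here $H_{g*}(t_e) = \delta(\bar e)\, t_e\, \delta(e)\inv = t_e g$, so conjugating $f_e(h)$ by $H_{g*}(t_e)$ produces $t_e g\, f_e(h)\, g\inv t_e\inv$, which is exactly the relation $f_{\bar e}(h) = t_e\, (ad_g f_e)(h)\, t_e\inv$ valid in $\Pi(\cal G')$; the relations for the remaining generators are preserved trivially. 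Hence the data above defines the asserted isomorphism, and it is canonical by construction.
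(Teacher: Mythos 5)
Your proof is correct: the paper itself gives no argument for this lemma (it is quoted from Section~2.4 of \cite{Ye01} with a \qed), and your direct verification of the data and of equation~(\ref{gog-isom-eq}) from Definition~\ref{gog-iso} --- including the asymmetry between $e$ and $\bar e$ and the check that $H_{g*}$ respects the path-group relations --- is exactly the routine computation being invoked. Nothing is missing.
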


\begin{lem}[
\cite{Ye01}, Lemma 2.11]
\label{isom-modif}
Let $\cal{G}$, $\cal{G}^{\prime}$ as well as $e, g$ and $H_g$ be as in Lemma \ref{gog-modif}, and let $H:\cal G \to \cal G$ be any graph-of-groups automorphism. 

Then there is a graph-of-groups automorphism $H': \cal G' \to \cal G'$ which coincides with $H$ everywhere except that 
$
\delta_{H'}(e) 
= H_{\tau(e)}(g) 
\delta_H(e) 
g^{-1}$, and $H'$ is conjugate to $H$ through $H_g$:
$${H}^{\prime}={H}_{g} \circ {H} \circ {H}_{g}^{-1}$$
\qed
\end{lem}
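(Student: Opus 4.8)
The plan is to define $H'$ directly as the conjugate $H' := H_g \circ H \circ H_g^{-1} : \cal G' \to \cal G'$ and then simply read off its defining data. Since $H_g : \cal G \to \cal G'$ is a graph-of-groups isomorphism (Lemma \ref{gog-modif}) and $H$ is a graph-of-groups automorphism of $\cal G$, this composite is automatically a graph-of-groups automorphism of $\cal G'$, and the conjugacy relation $H' = H_g \circ H \circ H_g^{-1}$ holds by construction. Hence the entire content of the lemma reduces to the explicit identification of the graph map, the vertex- and edge-group isomorphisms, and the correction terms of $H'$.

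The first, easy, step is to observe that $H_g$ and $H_g^{-1}$ act as the identity on the underlying graph and on every vertex and edge group. Consequently $H'$ has the same graph isomorphism and the same vertex- and edge-group isomorphisms as $H$, so that $H$ and $H'$ can differ \emph{only} in their correction terms. For these I would invoke the composition formula for correction terms (Remark 2.10 of \cite{Ye01}): when isomorphisms are composed as $K \circ L$, one has $\delta_{K \circ L}(c) = K_{\tau(L_\Gamma(c))}(\delta_L(c)) \, \delta_K(L_\Gamma(c))$. As a preliminary I would compute the correction terms of $H_g^{-1}$ from the defining relation \eqref{gog-isom-eq} together with Lemma \ref{gog-modif}: since $H_g$ carries $\delta_{H_g}(e) = g^{-1}$ with all other correction terms trivial, one finds $\delta_{H_g^{-1}}(e) = g$ and $\delta_{H_g^{-1}}(e') = 1$ for every other edge $e'$.

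The computation then proceeds in two applications of the composition formula. Composing $H$ with $H_g^{-1}$ pushes the single nontrivial term $g$ through $H$ and produces $H_{\tau(e)}(g)\,\delta_H(e)$ at the edge $e$, while leaving $\delta_H(e')$ unchanged at all other edges. Composing on the left with $H_g$, whose vertex maps are trivial and whose only nontrivial correction term is $\delta_{H_g}(e) = g^{-1}$, multiplies on the right by the factor $\delta_{H_g}(H_\Gamma(e'))$. Because the automorphisms relevant here induce the identity on the underlying graph, so that $H_\Gamma(e) = e$, this factor $g^{-1}$ attaches precisely at the edge $e$ and nowhere else; one thus obtains $\delta_{H'}(e') = \delta_H(e')$ for $e' \neq e$ and $\delta_{H'}(e) = H_{\tau(e)}(g)\,\delta_H(e)\,g^{-1}$, exactly as asserted.

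The only delicate point — and the step I expect to require the most care — is the bookkeeping inside this correction-term calculus: correctly extracting $\delta_{H_g^{-1}}$ from \eqref{gog-isom-eq}, and ensuring that the contribution $g$ coming from $H_g^{-1}$ and the contribution $g^{-1}$ coming from $H_g$ are collected at the \emph{same} edge. This is exactly where the fact that $H$ fixes $e$ (automatic whenever $H$ induces the identity on $\Gamma(\cal G)$, the standing situation for the Dehn twists of this paper) enters; without it the factor $g^{-1}$ would be attached at $H_\Gamma^{-1}(e)$ instead, and $H'$ would differ from $H$ at two edges rather than at one.
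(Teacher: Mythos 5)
Your proof is correct, and there is nothing in the paper to compare it against in detail: the paper states Lemma \ref{isom-modif} with a \qed and no argument, quoting it from \cite{Ye01}, Lemma 2.11, so the verification you give --- defining $H' := H_g \circ H \circ H_g^{-1}$ and reading off its data --- is exactly the computation hidden behind that citation. Your bookkeeping is accurate under the paper's conventions: with $ad_g(x)=gxg^{-1}$ and the composition rule $\delta_{K\circ L}(c) = K_{\tau(L_\Gamma(c))}(\delta_L(c))\,\delta_K(L_\Gamma(c))$, one indeed gets $\delta_{H_g^{-1}}(e)=g$, then $\delta_{H\circ H_g^{-1}}(e) = H_{\tau(e)}(g)\,\delta_H(e)$, and finally the factor $g^{-1}$ attached at $H_\Gamma^{-1}(e)$. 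Your closing observation is a genuine catch rather than a pedantic one: for an arbitrary graph-of-groups automorphism $H$ the statement as literally formulated fails when $H_\Gamma(e) \neq e$, since the conjugate then differs from $H$ at the two edges $e$ and $H_\Gamma^{-1}(e)$; the lemma implicitly assumes $H$ fixes $e$, which holds in all of the paper's applications (where $H_\Gamma = id_{\Gamma(\cal G)}$), but deserves to be said explicitly.
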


\begin{lem}[\cite{KLV01}, Corollary 4.8]
\label{auto-modif}
Let $H: \cal{G} \to \cal G$ and $H': \cal G \to \cal G$ be two graph-of-groups automorphisms, let $v$ be a vertex of $\cal G$ and $g$ an element of the vertex group $G_v$.  Assume that $H$ and $H'$ agree everywhere, except that 
$H'_v = ad_g H_v$
and for any edge $e$ with terminal vertex $\tau(e) = v$ one has $\delta_{H'}(e) = g \delta_H(e)$. Then $H$ and $H'$ induce the same outer automorphism:
$$\hat H = \hat H': \pi_1 \cal G \to \pi_1 \cal G$$
\qed
\end{lem}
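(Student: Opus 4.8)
The plan is to reduce the assertion to a single conjugation identity on the fundamental group based at $v$, and then to establish that identity by a telescoping computation along a closed path. First I would observe that the hypotheses already force $H_\Gamma(v) = v$: the correction term $\delta_H(e)$ of an edge $e$ with $\tau(e) = v$ lies in $G_{\tau(H_\Gamma(e))} = G_{H_\Gamma(v)}$, so the prescription $\delta_{H'}(e) = g\,\delta_H(e)$ with $g \in G_v$ is only meaningful when $H_\Gamma(v) = v$ (equivalently, $H'_v = ad_g \circ H_v$ must again be an isomorphism $G_v \to G_v$). Hence $H_{*v}$ and $H'_{*v}$ are both automorphisms of $\pi_1(\cal G, v)$, and since $g \in G_v \subseteq \pi_1(\cal G, v)$ it suffices to prove
\[
H'_{*}(W) = g\, H_{*}(W)\, g^{-1}
\]
for every closed connected word $W$ issued at $v$. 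Indeed this says $H'_{*v} = ad_g \circ H_{*v}$, so the two restrictions differ by an inner automorphism of $\pi_1(\cal G, v)$ and therefore represent the same outer isomorphism, giving $\hat H = \hat{H'}$.

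To prove the displayed identity I would write $W = r_0 t_{e_1} r_1 \cdots t_{e_q} r_q$ and record the vertices $v_0 = \tau(\bar{e}_1) = v$ and $v_i = \tau(e_i)$ for $1 \le i \le q$, so that $r_i \in G_{v_i}$ and $v_q = v$ by closedness. Evaluating the formulas defining $H_*$ on the generators exhibits $H_*(W)$ as the alternating product of vertex blocks $H_{v_i}(r_i)$ and edge blocks $\delta_H(\bar{e}_i)\, t_{H_\Gamma(e_i)}\, \delta_H(e_i)^{-1}$. Now introduce the bookkeeping element $\gamma_i := g$ if $v_i = v$ and $\gamma_i := 1$ otherwise, so that $\gamma_0 = \gamma_q = g$. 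Since $H$ and $H'$ differ only through $H'_v = ad_g \circ H_v$ and through the left multiplications $\delta_{H'}(e) = g\,\delta_H(e)$ on correction terms of edges terminating at $v$, the corresponding blocks of $H'_*(W)$ satisfy
\[
H'_{v_i}(r_i) = \gamma_i\, H_{v_i}(r_i)\, \gamma_i^{-1}, \qquad \delta_{H'}(\bar{e}_i)\, t_{H_\Gamma(e_i)}\, \delta_{H'}(e_i)^{-1} = \gamma_{i-1}\,\big(\delta_H(\bar{e}_i)\, t_{H_\Gamma(e_i)}\, \delta_H(e_i)^{-1}\big)\,\gamma_i^{-1}.
\]
Substituting these into the product for $H'_*(W)$, each interior factor $\gamma_i^{-1}$ cancels against the factor $\gamma_i$ of the adjacent block, the whole expression telescopes, and only the two outermost factors $\gamma_0 = g$ and $\gamma_q^{-1} = g^{-1}$ survive, yielding $H'_*(W) = g\, H_*(W)\, g^{-1}$ as desired.

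The step that will require the most care is the bookkeeping at interior passages of the path through $v$, that is, at indices $0 < i < q$ with $v_i = v$. At such an $i$ the conjugation $ad_g$ hidden in $H'_v$ flanks the vertex block by $g$ on the left and $g^{-1}$ on the right, while the modified correction terms contribute a matching $g^{-1}$ on the right of the incoming edge block (from $\delta_{H'}(e_i)$) and a matching $g$ on the left of the outgoing edge block (from $\delta_{H'}(\bar{e}_{i+1})$, whose initial vertex is again $v$). The factor $\gamma_i = g$ records exactly these occurrences, and the telescoping is precisely the statement that they cancel in consecutive pairs, leaving only the endpoint factors. Once this indexing is pinned down, the remaining verifications are routine: the loop case $\tau(e_i) = \tau(\bar{e}_i) = v$ (where $\gamma_{i-1} = \gamma_i = g$ and both correction terms of the edge block acquire the factor $g$, consistent with the identities above), and the compatibility of $H'_{*v} = ad_g \circ H_{*v}$ with the normal-form relations of Remark \ref{gog-normal-form}(2). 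Thus beyond the careful indexing no essential obstacle remains.
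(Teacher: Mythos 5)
Your proof is correct. Note that the paper itself offers no proof of this lemma: it is imported verbatim from \cite{KLV01} (Corollary~4.8), so there is no in-paper argument to compare against; your telescoping computation is the natural direct verification and it goes through without gaps. In particular, your two block identities are exactly right: since $\bar e_i$ terminates at $v_{i-1}$ and $e_i$ terminates at $v_i$, the hypothesis gives $\delta_{H'}(\bar e_i)=\gamma_{i-1}\delta_H(\bar e_i)$ and $\delta_{H'}(e_i)^{-1}=\delta_H(e_i)^{-1}\gamma_i^{-1}$, so every stable-letter block transforms as $\gamma_{i-1}H_*(t_i)\gamma_i^{-1}$ and every vertex block as $\gamma_i H_*(r_i)\gamma_i^{-1}$, and the product collapses to $g\,H_*(W)\,g^{-1}$ because $\gamma_0=\gamma_q=g$ for a loop at $v$. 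Your preliminary observation that the hypotheses force $H_\Gamma(v)=v$ (so that $ad_g H_v$ and $g\,\delta_H(e)$ are meaningful and $ad_g$ is an inner automorphism of $\pi_1(\cal G,v)$) is a worthwhile point that the statement leaves implicit, and your reduction of $\hat H=\hat H'$ to the single identity $H'_{*v}=ad_g\circ H_{*v}$ is exactly what the paper's definition of outer isomorphism requires.
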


We also need to consider how ``honest'' automorphisms, rather than outer ones, behave under base point change. This turns out to be a rather tricky issue:

\begin{rem}
\label{tricky}
${}^{}$
Let $H: \cal{G} \to \cal G$ be a graph-of-groups automorphism, and let $v$ and $v'$ be two vertices of $\cal G$
which are fixed by $H_{\Gamma}$.
For some word $U \in \Pi(\cal G)$, based on a path with initial vertex $v$ and terminal vertex $v'$, we consider the isomorphism 
$$\theta_{U}: \pi_1(\cal G, v') \to \pi_1(\cal G, v),\,\, W' \mapsto  W := U W' U^{-1} \,\, [= ad_{U}(W')]$$
and define $H_{*v', U} := ad_{U^{-1} H_{*}(U)} H_{*v'}$, in order to obtain, for a change of base point without changing the induced automorphism:
$$H_{*v', U} = \theta_{U}^{-1} H_{*v} \theta_{U}$$
%

Indeed, one has for any $W' \in \pi_1(\cal G, v')$ the equalities 
\begin{align*}
%
H_{* v', U}(W')  
= & \,\,\,  ad_{U^{-1} H_{*}(U)} H_{* v'}(W') \\
= & \,\,\,  U^{-1} H_{*}(U) H_{* v'}(W') H_{* }(U^{-1}) U \\
= & \,\,\,  H_{* v'}(H_*^{-1}(U^{-1}) U) H_{* v'}(W') H_{* v'}(U^{-1} H_*^{-1}(U)) \\
= & \,\,\,  H_{* }(H_*^{-1}(U^{-1})U W' U^{-1} H_*^{-1}(U))  \\
= & \,\,\,  U^{-1} H_{* v}(U W' U^{-1}) U  \\
= & \,\,\,  ad_{U^{-1}} H_{* v} ad_{U}(W')  \\
= & \,\,\,  \theta_U^{-1} H_{* v} \theta_U(W')
\end{align*}
\end{rem}




\subsection{Dehn twists and efficient Dehn twists}
\label{2.4}

${}^{}$

Dehn twists defined by means of graphs-of-groups seem to have appeared in the literature independently from each other at various places, with a slight degree of variation in generality and in the set-up. We follow here closely the ``original sources'' \cite{CL95, CL99, KLV01} and \cite{Ye01}; for an interesting alternative the reader is referred to \cite{Levitt}.


\begin{defn}
\label{Dehn-twist}
A {\it Dehn twist} $D: \cal{G} \rightarrow \cal{G}$ 
is a graph-of-groups automorphism such that the graph isomorphism $D_{\Gamma}$ as well as the group automorphisms $D_e$ and $D_v$, for any $e \in E(\cal{G})$ and any $v \in V(\cal{G})$, are all equal to the identity map. 
In addition, for any $e \in E(\cal{G})$ the correction term $\delta(e) \in G_{\tau(e)}$ is contained in the 
centralizer $C_e$ of $f_e(G_e)$ in $G_{\tau(e)}$. 

If 
one has 
$C_e = f_e(G_e)$ 
and $G_e$
is free, as is the case if $\pi_1 \cal G \cong \FN$ and $G_e \neq \{1\}$, then there is an element $\gamma_e$ in the center $Z(G_e)$ of $G_e$ such that $\delta(e)=f_e(\gamma_e)$.
In this case 
the {\it twistor} 
$z_e$ 
of $e$ is defined by $z_e = \gamma_{\bar{e}} \gamma_{e}^{-1}$. This yields $z_{\bar{e}} = z_{e}^{-1}$,
and for $z_e \neq 1$ it follows that $G_e \cong \Z$.
\end{defn}

For the rest of this subsection we will assume for simplicity that $D: \cal G \to \cal G$ is a {\em classical} Dehn twist, which means that all edge groups are infinite cyclic, so that 
the outer automorphism $\hat D$ induced by 
$D$ is well defined by specifying the twistor $z_e$ of every edge $e$.

\smallskip

Indeed, 
the Dehn twist $D$ determines an automorphism $D_*$ on the path group $\Pi(\cal{G})$ which given on the generators as follows:
\begin{enumerate}
\item[] $D_{*}(g) = g$, for $g \in G_{v}$, $v \in V(\cal{G})$;
\item[] $D_{*}(t_e) = t_e f_{e}(z_e)$, for $e \in E(\cal{G})$.
\end{enumerate}

The induced outer automorphism $\hat{D}$ of $\pi_1(\cal{G})$, as well as for any $v \in V(\cal G)$ the induced automorphism ${D}_{*v}$ of $\pi_1(\cal{G}, v)$
are called a {\it Dehn twist automorphism}. 
An automorphism $\phi \in \Out(\FN)$ is a {\it Dehn twist automorphism} if it is 
induced by some Dehn twist $D: \cal G \to \cal G$ via a suitable identification $\FN \cong \pi_1 \cal G$.

\begin{rem}
\label{preferred-lifts}
${}^{}$
Not every 
lift 
$\Phi \in \Aut(\FN)$ of a Dehn twist automorphism $\phi \in \Out(\FN)$ is itself 
represented by 
a Dehn twist automorphism, if we stick to a given identification $\FN \cong \pi_1 \cal G$, as there might not be a suitable vertex $v$ in $\cal G$.
%
\end{rem}

\begin{defn}

Given a Dehn twist $D: \cal{G} \rightarrow \cal{G}$, 
with 
family of twistors $(z_e)_{e \in E(\cal{G})}$,
any 
two edges $e_1$ and $e_2$ with common terminal vertex $v$ are said to be
\begin{enumerate}
\item {\it {positively bonded}}, if 
$f_{e_1}(z_{1}^{n_1})$ and $f_{e_2}(z_{2}^{n_2})$ are conjugate in $G_v$ for some
$n_1,n_2 \geq 1$.
\item {\it {negatively bonded}}, if 
$f_{e_1}(z_{1}^{n_1})$ and $f_{e_2}(z_{2}^{n_2})$ are conjugate in $G_v$ for some
$n_1 \geq 1$ 
and 
$n_2 \leq -1$.
\end{enumerate}
\end{defn}

\medskip

For the rest of this subsection, we always assume that $\cal{G}$ is a graph-of-groups such that its fundamental group $\pi_1(\cal{G})$ is free and of rank $N \geq 2$.

\begin{defn}
A Dehn twist $D: \cal{G} \rightarrow \cal{G}$ is said to be {\it efficient} if the following conditions are satisfied:
\begin{enumerate}
\item $\cal{G}$ is {\it minimal}: if $v = \tau(e)$ is a valence-one vertex, then the edge homomorphism $f_{e}: G_{e}\rightarrow G_{v}$ is not surjective.
\item There is no {\it invisible vertex}: there is no valence-two vertex $v=\tau(e_{1})=\tau(e_{2})$ $(e_{1} \neq e_{2})$ such that both edge maps $f_{e_{i}}: G_{e_{i}} \rightarrow G_{v}$ $(i=1,2)$ are surjective. 
\item No {\it proper power}: if $r^{p} \in f_{e}(G_{e})$ $(p \neq 0)$ then $r \in f_{e}(G_{e})$, for all $e \in E(\Gamma)$.
\item If $v= \tau(e_{1})=\tau(e_{2})$, then $e_{1}$ and $e_{2}$ are not positively bonded.
%
\item[(5)] No {\it unused edge}: for every $e \in E(\Gamma)$ the twistor satisfies $z_{e} \neq 1_{G_e}$ (or equivalently $\gamma_{e} \neq \gamma_{\bar{e}}$).
\end{enumerate}
\end{defn}

It has been shown in \cite{CL99} that every Dehn twist can be transformed algorithmically into an efficient Dehn twist. Thus every Dehn twist automorphism can be represented by some efficient Dehn twist.

Efficient Dehn twists are useful because of the following uniqueness result:

\begin{thm} [\cite{CL99}, Theorem 1.1]
\label{uniqueness}
Two efficient Dehn twists $D: \cal G \to \cal G$ and $D': \cal G' \to \cal G'$ define outer automorphisms 
that are conjugate to each other 
if and only if there is a graph-of-groups isomorphism $H: \cal G \to \cal G'$ with
$$\hat D' = \hat H \hat D \hat H^{-1} \, .$$ 
\end{thm}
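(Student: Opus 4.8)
\emph{The plan} is to prove the two implications separately, with essentially all of the work concentrated in the ``only if'' direction.

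For the ``if'' direction, suppose a graph-of-groups isomorphism $H:\cal G\to\cal G'$ is given with $\hat D'=\hat H\hat D\hat H^{-1}$. Since $H_{*}$ induces an isomorphism $\pi_1\cal G\to\pi_1\cal G'$ and both fundamental groups are identified with $\FN$ via the chosen markings, the outer isomorphism $\hat H$ is itself an element of $\Out(\FN)$, and the displayed relation is literally a conjugacy in $\Out(\FN)$. Thus this direction is immediate from the definitions recalled in Section \ref{2.1}.

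For the ``only if'' direction I would argue through the dynamics on Outer space, following the linear-growth prototype of Theorem \ref{intro-parabolic-dynamics}. First I would establish that an efficient Dehn twist $\hat D$ acts on $\CVN$ with purely parabolic orbits, and that the set of all limit points of these orbits is exactly the simplex $\Delta_{\cal G}\subset\partial\CVN$ obtained by letting the edge lengths of $\cal G$ range over all positive values. The key geometric input is that each metric tree dual to $\cal G$ is a projective fixed point of $\hat D$, since the twist acts along the edge groups and therefore does not change the edge-crossing length function; the parabolic estimates then show that arbitrary forward orbits converge into $\Delta_{\cal G}$. The upshot is that $\Delta_{\cal G}$ is an invariant of the conjugacy class of $\hat D$: if $\hat D'=\hat\alpha\,\hat D\,\hat\alpha^{-1}$ for some $\hat\alpha\in\Out(\FN)$, then $\hat\alpha$ conjugates the two parabolic actions and hence carries limit sets to limit sets, yielding $\hat\alpha(\Delta_{\cal G})=\Delta_{\cal G'}$.

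The heart of the matter is then a rigidity statement: a simplex of the form $\Delta_{\cal G}$, together with the data carried by the $\FN$-trees it parametrizes (their simplicial structure, their point stabilizers and their edge stabilizers), determines $\cal G$ up to graph-of-groups isomorphism. Concretely, from $\hat\alpha(\Delta_{\cal G})=\Delta_{\cal G'}$ I would extract an $\hat\alpha$-equivariant isomorphism between the simplicial trees underlying the two simplices, read off from it a bijection of vertices and edges intertwining the vertex and edge groups, and thereby produce a graph-of-groups isomorphism $H:\cal G\to\cal G'$ whose induced outer isomorphism $\hat H$ equals $\hat\alpha$. The five efficiency conditions (minimality, no invisible vertex, no proper power, no positively bonded edges, no unused edge) are precisely what prevents two non-isomorphic reduced graphs-of-groups from carrying the same limit simplex, so they are used exactly here to make the recovered $H$ well defined. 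Finally, to pass from $\hat H=\hat\alpha$ to the on-the-nose equation $\hat D'=\hat H\hat D\hat H^{-1}$, and to absorb the freedom in choosing honest representatives, base points and correction terms, I would invoke the modification Lemmas \ref{gog-modif}, \ref{isom-modif} and \ref{auto-modif}.

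The step I expect to be the main obstacle is this rigidity statement. Showing that an efficient $\cal G$ is reconstructible from $\Delta_{\cal G}$ requires controlling exactly which very small $\FN$-trees lie in the simplex and proving that their elliptic subgroups and incidence pattern reassemble uniquely into $\cal G$; the delicate point is ruling out accidental coincidences of twistors or of edge orbits, which is where the absence of positively bonded edges and of invisible vertices must be exploited with care. A secondary difficulty will be matching the twistors under $H$, so that the two induced automorphisms genuinely agree rather than being conjugate only after the fact.
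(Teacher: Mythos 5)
Your proposal is correct and takes essentially the same approach as the source: the paper quotes this theorem from \cite{CL99} without reproving it, but the proof there --- mirrored in this paper's own proof of the quadratic analogue, Theorem \ref{normal-form} --- is exactly your argument, namely parabolic orbits with limit points in the interior of $\Delta_{\cal G}$, invariance of the limit simplex under any conjugating automorphism (via Remark \ref{interior-points}), and recovery of the graph-of-groups isomorphism from the induced $\FN$-equivariant isomorphism of Bass-Serre trees at the center point (Lemma 4.5 of \cite{CL99}, Proposition 4.4 of \cite{Bass93}). One caveat: your closing worry about ``matching the twistors'' is unnecessary, since the conclusion is only an identity of \emph{outer} automorphisms, which follows formally once $\hat H = \hat\alpha$; the on-the-nose equation $D' = H D H^{-1}$ of graph-of-groups automorphisms is indeed a genuinely harder issue, cf.\ Remark \ref{conjugacy-split}.
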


\section{$H$-conjugation and 2-level graph-of-groups}

\begin{rem}
\label{Rod1}
Some of the material presented in this section seems to be in close proximity to work done by M. Rodenhausen in his thesis \cite{R}, in particular to his sections 4.4, 4.5, and 4.6.
\end{rem}

\subsection{$H$-conjugacy}
\label{2.3}

${}^{}$

The following definition, applied to graph-of-groups automorphisms, turns out to play a crucial role in our context:

\begin{defn}
\label{twisted-conjugacy}
Let $G$ be a group and $\Phi: G \to G$ be an automorphism of $G$. Then two elements $g, g' \in G$ are {\em $\Phi$-conjugate} to each other, written $g \simeq_\Phi g'$,  if there exists $h \in G$ such that
$$g' = h^{-1} g \, \Phi(h) \, .$$
The set of all elements $\Phi$-conjugate to $g$ will be denoted by $[g]_\Phi$ (where it easy to verify that $\simeq_\Phi$ is an equivalence relation and hence $[g]_\Phi$ a coset of the latter).

An element $g \in G$ is called $\Phi$-trivial if it is $\Phi$-conjugate to the neutral element $1 \in G$.
\end{defn}

It follows directly from this definition that $g \simeq_\Phi h$ if and only if 
$g^{-1} \simeq_{\Phi^{-1}} h^{-1}$.
However, note that 
$\Phi$-conjugacy and $\Phi^{-1}$-conjugacy do in general disagree.

\medskip

We will now specialize to the case where $G$ and $\Phi$ are given in graph-of-groups language:

Let $\cal G$ be a graph-of-groups, let $v$ be a vertex of $\cal G$, and let $H: \cal G \to \cal G$ be a graph-of-groups automorphism, which 
throughout most of
this subsection 
will 
act trivially on the underlying graph $\Gamma(\cal G)$. Then $H$ induces an automorphism $H_{*v}: \pi_1(\cal G, v) \to \pi_1(\cal G, v)$, and the notions introduced in Definition \ref{twisted-conjugacy} can be applied to $H_{*v}$. However, the group $\pi_1(\cal G, v)$ is canonically embedded in the ambient group $\Pi(\cal G)$, and many issues, in particular those coming from a change of base point in $\cal G$, can be much better understood there. 
It might thus be tempting to pass directly to the automorphism $H_*: \Pi(\cal G \to \Pi(\cal G)$, which after all restricts on the subgroup $\pi_1(\cal G, v)$ to $H_{*v}$, and to consider directly $H_*$-conjugacy in $\Pi(\cal G)$. This, however, would lead to rather undesired phenomena:

\begin{example}
\label{Kaidis-example}
${}^{}$
Let $H: \cal G \to \cal G$ be a 
graph-of-groups
isomorphism 
which acts as identity on $\Gamma(\cal G)$ and on all vertex groups. We furthermore assume that $\cal G$ has trivial edge groups.
We specify $\cal G$ and $H$ as follows:

\smallskip
\noindent
(1) 
Let $\Gamma(\cal G)$ be the graph which consist of a single edge $e$ and two distinct vertices $v_1 = \tau(e) \neq v = \tau(\bar e)$, and set $\delta(e) = a^{-1} \neq 1 \in G_{v_1}, \, \delta(\bar e) = b^{-1} \neq 1 \in G_{v}$. For $t = t_e$ this gives $H_*(t) = b^{-1} t a$.

We compute $a \simeq_{H_*} t a (a^{-1} t^{-1} b) = b$, thus obtaining an example of two non-trivial reduced words $a$ and $b$ in $\Pi(\cal G)$ which are ${H_*}$-conjugate to each other, but their underlying loops are distinct (since trivial loops at distinct vertices are distinct). 

\smallskip
\noindent
(2) 
We start out with all data as in 
example (1), 
except that $a = 1$. We then add a second edge $e'$ and a third vertex $v_2$ different from $v$ and $v_1$, with $v_2 = \tau(e')$ and $v = \tau(\bar e')$. We set $\delta(e') = 1 \in G_{v_2}$ and $\delta(\bar e') = c^{-1} \in G_{v}$, so that for $t' = t_{e'}$ one has $H_*(t') = c^{-1} t$.

As in 
example (1) 
we compute $1 \simeq_{H_*} t 1 (t^{-1} b) = b$ and  $1 \simeq_{H_*} t' 1 (t'^{-1} c) = c$, and thus $b \simeq_{H_*} c$. Thus, if we further specify $G_{v_1} = G_{v_2} = \{1\}$ and $G_v \cong F_2 = \langle b, c \rangle$, then we see that $H_{*v} = id_{F_2}$, but the two generators $b$ and $c$ are $H_*$-conjugate. 

Alternatively one could set $G_v \cong \Z = \langle b \rangle$ and $c = b^2$, thus getting $b \simeq_{H_*} b^2$ for $H_* = id_\Z$.
\end{example}

The reason for the ``misbehavior'' in the above Example \ref{Kaidis-example} (2) comes from the fact that $H_{*v}$-conjugacy classes do not inject into $H_*$-conjugacy classes (under the subgroup embedding): the $H_*$-trivial class has more than one preimage. We remedy this by the following definition, where we note that in the definition of a ``reduced word'' $W$ (see Definition \ref{reduced-word}) the case $W = 1 \in \Pi(\cal G)$ is excluded.

\begin{defn}
\label{trivial-reduced}
(1)
Let $W^*(\cal G)$ be defined as $W(\cal G)$, except that, whenever the trivial element $1 \in \underset{v \in V(\Gamma)}{\operatorname{\ast}}G_{v}$ appears in a reduced word $W$ (possibly invisible, 
since suppressed when it occurs between two subsequent letters from $\{t_{e} \mid e\in E(\Gamma)\}$) then it has to be specified to which of the free factors $G_v$ it belongs. We 
do this notationally by writing $1_v := 1_{G_v}$.

\smallskip
\noindent
(2)
Furthermore, if a 
word $W \in W^*(\cal G)$ is {\em connected}, then we require in addition that for any syllabe $t_e 1_v t_{e'}$ of $W$ one has $v = \tau(e) = \tau(\bar e')$.
In this case, however, since $v$ is uniquely determined by $e$ and $e'$, one is again allowed to suppress $1_v$ and write again $t_e t_{e'}$ for $t_e 1_v t_{e'}$.

\smallskip
\noindent
(3)
Accordingly, we say that 
a connected word $W \in \Pi^*(\cal G)$
is a {\em reduced} 
if either 
$W \in \Pi(\cal G) \smallsetminus \{1\}$
and $W$ is reduced, or else if $W = 1_v$ for some vertex $v$ of $\cal G$.
We carry over the terminology from Definition \ref{reduced-word} (1) 
and
(2) in the obvious way, where for $W= 1_v$ the {\em terminal vertex} $\tau(W)$ is given by $v$, and $W$ is {\em issued at} $v$.
\end{defn}

We'd like to give the reader a hint why the slightly bizarre definition of $\Pi^*(\cal G)$ makes sense: What one really considers here are pairs $(W, v)$, where $W$ is an element of $\Pi(\cal G)$ with underlying connected path $\gamma$, given as reduced connected word, and $v$ is its terminal vertex in $\cal G$. However, since for $|W| \geq 1$ the underlying non-trivial path $\gamma$ determines $v =\tau(\gamma)$, writing $(W, v)$ for $W$ would just be uselessly cumbersome. The same reasoning extends to $|W| = 0$ as long as $W \in G_v \smallsetminus \{1\} \subset \Pi(\cal G)$. But for any pair $(1, v)$ the second coordinate becomes meaningful, and thus the reader is free to simply interpret $1_v$ as an abbreviation for the pair $(1, v)$.

\begin{rem}
\label{extension-action}
It is easy to see that with the above definitions the $H_*$-action on $\Pi(\cal G)$ extends directly to a well defined $H_*$-action on $\Pi^*(\cal G)$, through setting $H_*(1_v) := 1_{H_\Gamma(v)}$. 
\end{rem}

We can now go on to define:

\begin{defn}
\label{H-conjugation}
Let $H: \cal G \to \cal G$ be a graph-of-groups automorphism which acts trivially on $\Gamma(\cal G)$.
Let $W, W' \in \Pi^*(\cal G)$ be two closed connected words, issuing from vertices $v$ and $v'$ respectively. Then $W$ and $W'$ are {\em $H$-conjugate}, written $W \simeq_H W'$, if there exists a connected word $U$ with initial vertex $v$ and terminal vertex $v'$ such that one has: 
$$W' = U^{-1} W H_*(U)$$
\end{defn}

\begin{rem}
\label{vertex-conjugacy}
(1)
It is easy to verify 
that $H$-conjugation is indeed an equivalence relation, 
where we denote the equivalence class of $W$ by $[W]_H$.

\smallskip
\noindent
(2)
It follows directly, for closed connected words $W$ and $W'$ 
issuing from the same vertex $v$, that $W$ and $W'$ are $H$-conjugate if and only if they are $H_{*v}$-conjugate. 

\smallskip
\noindent
(3)
Furthermore, for any two vertices $v$ and $v'$ and any ``connecting word'' $U$ as in Definition \ref{H-conjugation}, if we use conjugation by $U$ in $\Pi(\cal G)$ to identify $\pi_1(\cal G, v)$ and $\pi_1(\cal G, v')$, then $H_{*v}$-conjugacy and $H_{*v', U}$-conjugacy coincide (for $H_{*v', U}$ as defined in Remark \ref{tricky}).

\smallskip
\noindent
(4)
However, it is important to note that in the situation considered in (3) above, the notion of being ``$H_{*v}$-trivial'' and ``$H_{*v'}$-trivial'' do {\em not} coincide: In general, $H$-conjugation of an element $W \simeq_{H_{*v}} 1$ 
in $\pi_1(\cal G, v)$
will give an element 
$W' = U^{-1} W H_*(U) \in \pi_1(\cal G, v')$
which is not $H_{*v'}$-trivial.
\end{rem}

The notion of $H$-conjugacy enables us to perform {\em $H$-reduction} on a closed connected word $W \in \Pi(\cal G)$, by $H$-conjugating it to a word $W'$ with $|W'|_\cal G < |W|_\cal G$. We say that $W$ is {\em $H$-reduced} if such a shortening of the length through $H$-conjugation is not possible.

\begin{rem} [\cite{Ye01}, 
Remark 4.8]
\label{H-reduced}
${}^{}$
(1)
Let $W, W' \in \Pi^*(\cal G)$ be two $H$-reduced closed connected words, with underlying 
closed paths $\gamma$ and $\gamma'$ respectively. If $W$ is $H$-conjugate to $W'$, 
and either $\gamma$ or $\gamma'$ is non-trivial,
then it can be shown that
$\gamma$ and $\gamma'$ must agree up to a cyclic permutation.

However, if both $\gamma$ and $\gamma'$ are trivial, this conclusion may fail, as shown in Example \ref{Kaidis-example} (1).


\smallskip
\noindent
(2)
We say that a closed connected word $W \in \Pi^*(\cal G)$ is {\em $H$-zero} if it is $H$-conjugate to some word $W'$ of $\cal G$-length $|W'|_\cal G = 0$.

In other words, after $H$-reducing $W$ we obtain a word $W'$ which is based on a trivial loop. In general, however, the information on which vertex this trivial loop starts and finishes depends on $W'$ (and thus on $W$) and can not be changed without changing the $H$-conjugacy class.
\end{rem}


\subsection{2-level graph-of-groups}
\label{2.3.5}

${}^{}$

In 
the subsequent sections of 
this paper we will consider the situation where a graph-of-groups $\cal G$ is given by defining, for each vertex $V$ of $\cal G$, the vertex group $G_V$ through a (possibly 
trivial) {\em local} graph-of-groups $\cal G_V$:
$$G_V \cong \pi_1 \cal G_V$$
To avoid confusion, we denote in this context the non-local vertices and edges by capital letters. For simplicity we restrict our attention to the only case which matters for this paper, namely where all 
edge groups $G_E$ of $\cal G$ are trivial:
$$G_E = \{1\}$$

We now want to define a graph-of-groups automorphism $H$ of $\cal G$, and we'd like to do this while minimizing the amount of technical data given
though making sure that the outer automorphism $\hat H$ is well defined. 
Again for simplicity, we assume that the underlying graph automorphism $H_\Gamma$ equals to the identity map $id_{\Gamma(\cal G)}$.

In order to define such $H: \cal G \to \cal G$, we need the following data:
\begin{enumerate}
\item
For every vertex $V$ of $\cal G$ we consider a {\em local} graph-of-groups automorphism $\cal H_V: \cal G_V \to \cal G_V$, for which we also assume $(\cal H_V)_{\Gamma} = id_{\Gamma(\cal G_V)}$.
\item
For every edge $E$ of $\cal G$ with terminal vertex $V = \tau(E)$ let $\delta^*(E) \in \Pi^*(\cal G_V)$ be a closed connected word.
\item
Furthermore, for every vertex $V$ of $\cal G$ we choose a {\em local base point} $v_V$ in $\cal G_V$, and for every edge $E$ with $\tau(E) = V$ a {\em connecting word} $U_E \in \Pi^*(\cal G_V)$, with underlying 
path $\gamma_E$ which connects 
$\tau(\delta^*(E))$ to $v_V$.
\end{enumerate}

We now specify the vertex groups $G_V = \pi_1(\cal G_V, v_V)$ and the correction terms $\delta(E) = (\cal H_V)_*(U_E^{-1}) \delta^*(E) U_E$ and obtain thus a well defined graph-of-groups automorphism $H: \cal G \to \cal G$ with underlying graph automorphism $H_\Gamma = id_{\Gamma(\cal G)}$, and with vertex group automorphisms given by 
$H_V = (\cal H_V)_{* v_V}$
for each vertex $V$ of $\cal G$. The equality 
(\ref{gog-isom-eq})
from Definition \ref{gog-iso} 
is automatically  satisfied, since all edge groups $G_E$ are trivial.

\begin{prop}
\label{independence}
The outer automorphism $\hat H$ on $\pi_1 \cal G$ induced by 
$H$ depends only on the above data (1) and (2), but not on (3).  

More precisely, if $H': \cal G' \to \cal G'$ is a second graph-of-groups automorphisms which agrees with $H$ in (1) and (2), then there is a graph-of-groups isomorphism $H_0: \cal G \to \cal G'$ such that
one has:
$$\hat H' = \hat H_0^{-1} \hat H \hat H_0$$
\end{prop}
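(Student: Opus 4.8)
The plan is to show that the dependence of $H$ on the choices in (3) — namely the local base points $v_V$ and the connecting words $U_E$ — can be absorbed into a graph-of-groups isomorphism $H_0$, so that changing (3) amounts to conjugating $\hat H$ by $\hat H_0$. First I would reduce to the case where $H$ and $H'$ differ at a \emph{single} vertex $V$, changing only the local base point from $v_V$ to $v'_V$ and the connecting words $U_E$ (for edges $E$ with $\tau(E) = V$) to new connecting words $U'_E$; the general case follows by composing such elementary moves over all vertices. The natural candidate for $H_0$ is the graph-of-groups isomorphism that is the identity on the underlying graph, that uses on the vertex group $G_V = \pi_1(\cal G_V, v_V)$ the base-point-change isomorphism induced by a connecting word inside $\cal G_V$ from $v_V$ to $v'_V$, is the identity on all other vertex groups, and carries appropriate correction terms to realize the change on the edges incident to $V$.

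The key computation is to track how the correction term $\delta(E) = (\cal H_V)_*(U_E^{-1}) \delta^*(E) U_E$ transforms. Writing $P$ for a connecting word in $\cal G_V$ from $v_V$ to $v'_V$, the new connecting word $U'_E$ connects $\tau(\delta^*(E))$ to $v'_V$ and differs from $U_E$ by (a word $\simeq$) $P$ on the right, so that $\delta'(E) = (\cal H_V)_*(U_E^{-1} P) \delta^*(E) U_E P$. The point is that the factors of $P$ are exactly the kind of vertex-group element that Lemma \ref{auto-modif} and the base-point-change formula of Remark \ref{tricky} are designed to absorb: the new vertex automorphism $H'_V = (\cal H_V)_{*v'_V}$ is related to $H_V = (\cal H_V)_{*v_V}$ precisely by $H_{*v', U}$ for the relevant connecting word, and the induced adjustment on the incident correction terms matches the rule $\delta_{H'}(E) = g\,\delta_H(E)$ (up to the twisted factor $(\cal H_V)_*(\cdot)$) prescribed there. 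Thus by combining Remark \ref{tricky}, which shows the base-point change does not alter the induced \emph{honest} automorphism up to the explicit conjugation $\theta_U^{-1} H_{*v} \theta_U$, with Lemma \ref{auto-modif}, which shows that an $ad_g$-twist on $H_V$ together with the matching $g$-shift on the incident correction terms leaves the induced \emph{outer} automorphism unchanged, I would identify $H_0$ and verify $\hat H' = \hat H_0^{-1} \hat H \hat H_0$.

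The main obstacle I anticipate is bookkeeping the interaction between the \emph{two} levels: the change of local base point lives inside the local graph-of-groups $\cal G_V$, where it is governed by Remark \ref{tricky} and Lemma \ref{auto-modif} applied to $\cal H_V$, but its effect must be pushed up through the edge homomorphisms of the ambient $\cal G$ and reconciled with the correction-term formula for $H$ on $\cal G$. Because all ambient edge groups $G_E$ are trivial, equation (\ref{gog-isom-eq}) imposes no constraint at the top level, which removes the most delicate compatibility condition; this is exactly why the hypothesis $G_E = \{1\}$ is being exploited. The residual care is to check that the connecting word $P$ can be chosen consistently for all edges incident to $V$ (so that a single $H_0$ works), and that when $V$ is also the \emph{initial} vertex of some edge $E$ (i.e.\ $\tau(\bar E) = V$), the transformation of $\delta(\bar E)$ is handled symmetrically; I expect this to follow by the same computation applied to $\bar E$, using $H_*(t_E) = \delta(\bar E)\, t_E\, \delta(E)^{-1}$ from part (b) of the definition of $H_*$.
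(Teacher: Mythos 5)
Your overall strategy --- absorb the change of data (3) vertex by vertex, using the base-point-change formula of Remark \ref{tricky} together with Lemma \ref{auto-modif} --- reproduces the first two steps of the paper's proof, but there is a genuine gap: you only treat the case where the new connecting words have the special form $U'_E = U_E P$ for a \emph{single} path $P$ from $v_V$ to $v'_V$, and the ``residual care'' you flag (choosing $P$ consistently for all edges incident to $V$) cannot in fact be carried out. The data (3) allows each $U'_E$ to be an \emph{arbitrary} connected word from $\tau(\delta^*(E))$ to $v'_V$, so the discrepancy $W_E := P^{-1}U_E^{-1}U'_E$ is an arbitrary, edge-dependent loop at $v'_V$; no choice of $P$ kills all $W_E$ simultaneously (this already fails for two edges at one vertex, and even for a single edge in the degenerate case $v'_V = v_V$, $U'_E \neq U_E$). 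Concretely, your construction produces the correction term $\tilde\delta(E) = (\cal H_V)_*\bigl((U_EP)^{-1}\bigr)\,\delta^*(E)\,U_EP$ (note also a sign slip in your formula: the twisted factor must be $(\cal H_V)_*(P^{-1}U_E^{-1})$, not $(\cal H_V)_*(U_E^{-1}P)$), whereas $H'$ carries $\delta'(E) = (\cal H_V)_*({U'_E}^{-1})\,\delta^*(E)\,U'_E = (\cal H_V)_*(W_E)^{-1}\,\tilde\delta(E)\,W_E$, i.e.\ the two differ by a twisted conjugation (in the sense of Definition \ref{twisted-conjugacy}, with respect to $(\cal H_V)_{*v'_V}^{-1}$) that varies from edge to edge.

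The missing ingredient is Lemma \ref{isom-modif}, which is exactly step (3) of the paper's proof: conjugating by a graph-of-groups automorphism that is the identity on the graph and on all vertex and edge groups, but carries a nontrivial correction term on one edge, realizes the move $\delta(E) \mapsto H_V(g)\,\delta(E)\,g^{-1}$ on that single edge, i.e.\ twisted conjugation of one correction term at a time. Lemma \ref{auto-modif} cannot substitute for it: it shifts \emph{all} correction terms at a vertex by one common left factor, coupled to an inner twist of $H_V$, and since after your base-point change the vertex automorphism is already pinned to $(\cal H_V)_{*v'_V}$, there is no freedom left there to absorb the edge-dependent loops $W_E$. This per-edge step is also the reason the conclusion of Proposition \ref{independence} is conjugacy by a generally nontrivial $\hat H_0$ rather than the equality $\hat H' = \hat H$ (which is what Remark \ref{tricky} plus Lemma \ref{auto-modif} alone would give). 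With Lemma \ref{isom-modif} inserted to handle the $W_E$, your argument closes up and coincides with the paper's three-step proof.
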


\begin{proof}
For any vertex $V$ of $\Gamma = \Gamma(\cal G) = \Gamma(\cal G')$ let $v_V$ and $v'_V$ be the local base points in $\cal G_V$ for $\cal G$ and $\cal G'$ respectively, given by condition (3) above. Similarly, for any $E$ of $\Gamma$ we denote 
the connecting words from (3) by $U_E$ and $U'_E$. The issuing correction terms for $H$ and $H'$ are given by $\delta(E) = (\cal H_V)_*(U_E^{-1}) \delta^*(E) U_E$ and $\delta'(E) = {(\cal H_V)}_*({U'_E}^{-1}) \delta^*(E) U'_E$ respectively.
The conjugating graph-of-groups isomorphism $H_0$ is now constructed in three steps:

\smallskip
\noindent
(1) 
First, choose for every vertex $V$ a word $U_V \in \Pi^*(\cal G_V)$ with underlying path that connects $v_V$ to $v'_V$, and consider the isomorphisms 
$$\theta_{U_V}: \pi_1(\cal G_V, v'_V) \to \pi_1(\cal G_V, v_V), \, W \mapsto U_V W U_V^{-1}$$
and 
\begin{align*}
(\cal H_V)_{*v',U_V} &= \theta_{U_V}^{-1} (\cal H_V)_{*v} \theta_{U_V} \\
&= ad_{U_V^{-1} (\cal H_V)_{*}(U_V)} \circ (\cal H_V)_{*v'}: \pi_1(\cal G_V, v') \to \pi_1(\cal G_V, v')
\end{align*}
from Remark \ref{tricky}.
One then defines a 
graph-of-groups
isomorphism $F: \cal G \to \cal G'$ 
with $F_\Gamma = id_{\Gamma(\cal G)}$ 
by setting $F_V = \theta_{U_V}^{-1}$ for every vertex $V$ 
and $\delta_F(E) = 1$ for every edge $E$ 
of $\cal G$, and obtains $H'' = F H F^{-1}: \cal G' \to \cal G'$ with $H''_{V} := (\cal H_V)_{*v',U_V}$ and 
(using the formulae from Remark 2.10 of \cite{Ye01})
$$\delta_{H''}(E) = \theta_{U_V}^{-1}(\delta(E)) = U_V^{-1} (\cal H_V)_*(U_E^{-1}) \delta^*(E) U_E U_V$$
for every edge $E$ with terminal vertex $V$.

\smallskip
\noindent
(2) 
We now use Lemma \ref{auto-modif}
to define a graph-of-groups automorphism $H''': \cal G' \to \cal G'$ with $\hat{H}''' = \hat{H}''$, such that for every vertex $V$ of $\cal G'$ one has $H'''_V = ad^{-1}_{U_V^{-1} (\cal H_V)_{*}(U_V)} H''_V = (\cal H_V)_{*v'}$ and for every edge $E$ of $\cal G'$ one has:
\begin{align*}
\delta_{H'''}(E) &= (U_V^{-1} (\cal H_V)_{*}(U_V))^{-1} \delta_{H''}(E) \\
&= [(\cal H_V)_{*}(U_V)^{-1} U_V] [U_V^{-1} (\cal H_V)_*(U_E^{-1}) \delta^*(E) U_E U_V]\\
&= (\cal H_V)_{*}(U_V)^{-1}  (\cal H_V)_*(U_E^{-1}) \delta^*(E) U_E U_V) \\
&= (\cal H_V)_{*}(U_V^{-1}  U_E^{-1}) \delta^*(E) U_E U_V)
\end{align*}

\smallskip
\noindent
(3) 
Finally, one applies Lemma \ref{isom-modif}
to obtain a graph-of-groups automorphism $R': \cal G' \to \cal G'$ which is the identity everywhere and conjugates $H'''$ to $H'''' = R' H''' R'^{-1}$, such that the correction term $\delta_{H''''}(E)$ of any edge $E$ can take on an arbitrary value within its $(H'''_V)^{-1}$-equivalence class. In particular one can choose $R'$ to obtain 
\begin{align*}
\delta_{H''''}(E)
&= H'''_V((U_V^{-1} U_E^{-1} U'_E)^{-1}\cdot \delta_{H'''}(E) \cdot (U_V^{-1} U_E^{-1} U'_E)\\
&= 
(\cal H_V)_{*v'}(U_V^{-1} U_E^{-1} U'_E)^{-1}[(\cal H_V)_{*}(U_V^{-1}  U_E^{-1}) \delta^*(E) U_E U_V)]  U_V^{-1} U_E^{-1} U'_E\\
&= {(\cal H_V)}_*({U'_E}^{-1}) \delta^*(E) U'_E = \delta'(E)
\end{align*}
and hence
$H'''' = H'$, which proves our claim.
\end{proof}

In the above proof we have actually shown something slightly stronger:

\begin{cor}
\label{class-dependence}
The outer automorphism $\hat H$, for $H$ as in Proposition \ref{independence}, depends only on the 
$\cal H_{\tau(E)}^{-1}$-conjugacy classes of the elements $\delta^*(E)$ and not on the 
representatives
$\delta^*(E)$ themselves.
\qed
\end{cor}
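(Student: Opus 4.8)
The plan is to reduce the statement to the final step of the proof of Proposition~\ref{independence}. There, via Lemma~\ref{isom-modif}, it is shown that the induced outer automorphism $\hat H$ does not change, up to conjugation by a graph-of-groups isomorphism, if any correction term $\delta(E)$ is replaced by an arbitrary representative of its $H_V^{-1}$-conjugacy class, where $V = \tau(E)$ and $H_V = (\cal H_V)_{*v_V}$: indeed Lemma~\ref{isom-modif} permits the replacement $\delta(E) \mapsto H_V(g)\,\delta(E)\,g^{-1}$ for any $g \in G_V$, which by Definition~\ref{twisted-conjugacy} is precisely the passage to an $H_V^{-1}$-conjugate of $\delta(E)$. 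Hence it suffices to prove that replacing a chosen representative $\delta^*(E)$ by a $\cal H_V^{-1}$-conjugate $\tilde\delta^*(E)$ alters the associated correction term $\delta(E) = (\cal H_V)_*(U_E^{-1})\,\delta^*(E)\,U_E$ only within its $H_V^{-1}$-conjugacy class.

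To carry this out I would write, following Definition~\ref{H-conjugation} applied to the automorphism $\cal H_V^{-1}$, the modified word as $\tilde\delta^*(E) = Z^{-1}\,\delta^*(E)\,(\cal H_V)_*^{-1}(Z)$, where the connecting word $Z$ runs from $w := \tau(\delta^*(E))$ to $w' := \tau(\tilde\delta^*(E))$. Since by Proposition~\ref{independence} the connecting words of datum~(3) are immaterial for $\hat H$, I am free to form the correction term for $\tilde\delta^*(E)$ using the connecting word $\tilde U_E := Z^{-1} U_E$ (which runs from $w'$ to $v_V$). Substituting and collecting terms gives
\[
\tilde\delta(E) = (\cal H_V)_*(U_E^{-1})\,(\cal H_V)_*(Z)\,Z^{-1}\;\delta^*(E)\;(\cal H_V)_*^{-1}(Z)\,Z^{-1}\,U_E ,
\]
and with $h := (\cal H_V)_*(U_E^{-1})\,Z\,(\cal H_V)_*(Z^{-1}U_E)$ one checks that $h$ is a closed word issued at $v_V$, hence an element of $G_V$, and that $\tilde\delta(E) = h^{-1}\,\delta(E)\,H_V^{-1}(h)$, i.e. $\delta(E) \simeq_{H_V^{-1}} \tilde\delta(E)$, as required.

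The verification of this last identity, once $h$ has been guessed, is a routine cancellation (using $(\cal H_V)_*(U_E)(\cal H_V)_*(U_E^{-1}) = 1$, $U_E U_E^{-1} = 1$, and $H_V^{-1}(h) = (\cal H_V)_*^{-1}(h)$ for $h \in G_V$). The genuine subtlety, and the step I expect to be the main obstacle, is the bookkeeping of issuing vertices: passing to a $\cal H_V^{-1}$-conjugate may move the base vertex $w$ of the (possibly trivial) underlying loop of $\delta^*(E)$, which is exactly why one must work with the base-point-free $H$-conjugacy of Definition~\ref{H-conjugation} rather than with conjugacy at a fixed vertex, and why Proposition~\ref{independence} must be invoked to realign the base point through $\tilde U_E = Z^{-1} U_E$. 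Conceptually the element $h$ is nothing but the base-point-change cocycle of Remark~\ref{tricky} (compare Remark~\ref{vertex-conjugacy}(3)), transporting $\cal H_V^{-1}$-conjugacy at $w$ to $H_V^{-1}$-conjugacy at $v_V$. Feeding $\delta(E) \simeq_{H_V^{-1}} \tilde\delta(E)$ back into the Lemma~\ref{isom-modif} step of the proof of Proposition~\ref{independence} then produces, on the same graph-of-groups data, an automorphism with correction terms $\tilde\delta(E)$ that is conjugate through a graph-of-groups isomorphism to the original $H$; this is the assertion of the corollary.
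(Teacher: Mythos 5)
Your proof is correct and takes essentially the same route as the paper: the corollary is there obtained as a byproduct of step (3) in the proof of Proposition \ref{independence}, where Lemma \ref{isom-modif} is used to move each correction term $\delta(E)$ arbitrarily within its $H_V^{-1}$-conjugacy class. Your explicit cocycle $h$ merely spells out the base-point transport computation (from $\cal H_V^{-1}$-conjugacy of the $\delta^*(E)$ to $H_V^{-1}$-conjugacy of the $\delta(E)$ in $G_V$) that the paper leaves implicit in the chain of equalities of that step.
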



The following definition turns out to be very useful:

\begin{defn}
\label{locally-zero}
We say that an edge $E$ of $\cal G$, assumed as before to have trivial edge group $G_E = \{1\}$ and terminal vertex $V$, is {\em locally zero} if the correction term $\delta(E) \in G_V \cong \pi_1 \cal G_V$ 
is given by an element $\delta^*(E) \in \Pi^*(\cal G_V)$ which is 
$\cal H_V^{-1}$-zero (i.e. 
$\cal H_V^{-1}$-conjugate to an element of $\cal G_V$-length 0,
see Remark \ref{H-reduced} (2)).
\end{defn}


Let us finish this subsection by pointing out that it is fairly easy to give examples of automorphisms $\phi, \phi' \in \Out(\FN)$ which are defined by graph-of-groups isomorphisms $H$ and $H'$ as in Proposition \ref{independence} respectively, where all edges are locally zero, with correction terms given by elements that are indeed ``locally trivial'' (i.e. any $\delta^*(E)$ is 
$\cal H_{\tau(E)}^{-1}$-conjugate to the trivial element), but $\phi$ and $\phi'$ are not conjugate in $\Out(\FN)$, since for some edge $E$ the element $\delta^*(E)$ is 
$\cal H_{\tau(E)}^{-1}$-conjugate to $1_v$, while ${\delta'}^*(E)$ is 
${\cal H'}_{\tau(E)}^{-1}$-conjugate to $1_{v'}$ with $v \neq v'$.


\section{2-level Dehn twists}
\label{sec-3}

From Definition \ref{Dehn-twist} we see that any graph-of-groups automorphism $D: \cal G \to \cal G$, which is the given through the identity map on the underlying graph $\Gamma(\cal G)$, 
on any vertex group $G_V$ and on any edge group $G_e$, is a Dehn twist. Inspired by this observation, in \cite{Ye01} {\em partial Dehn twists} $H: \cal G \to \cal G$ have been defined, which differ from $D$ as before in that for some vertices of $\cal G$ the vertex group automorphisms are different from the identity map. A special case, already considered in \cite{Ye01}, is given by the following:

\begin{defn}
\label{2-level-D-twist}
A {\em 2-level Dehn twist} is given by a graph-of-groups $\cal G$ with trivial edge groups, and a graph-of-groups isomorphism $H: \cal G \to \cal G$ which induces the identity on the underlying graph $\Gamma(\cal G)$ and which induces on every (possibly trivial) vertex group a Dehn twist automorphism or an inner automorphism.
\end{defn}

To be more specific regarding the vertex group automorphisms, we want to use now the concept of a 2-level graph-of-groups as introduced in section \ref{2.3.5} to describe a 2-level Dehn twist. As done there, we will denote vertices and edges of $\cal G$ by capital letters, while for all local graph-of-groups we use small letters. We first note a slightly technical point,
which arises when $H$ as in Definition \ref{2-level-D-twist} is specified to an automorphism of a 2-level graph-of-groups as in section \ref{2.3.5}:

\begin{rem}
\label{Dehn-auto}
In Definition \ref{2-level-D-twist}, at any vertex $V$ of $\cal G$ the local Dehn twist automorphism at $V$ is formally defined as element 
of 
$\Aut(G_V)$ and not 
of 
$\Out(G_V)$.  From Remark \ref{preferred-lifts} it may appear that this is a 
serious 
restriction,
once the identification $G_V \cong \pi_1 \cal G_V$ is fixed. However, it follows from Lemma \ref{auto-modif}
that this is not true, 
since one can adapt the correction terms of any edge $E$ with terminal vertex $V$ accordingly.
\end{rem}

As has been recalled in 
section 
\ref{2.4},
every Dehn twist automorphism of a free group $\FN$ can be represented by an {\em efficient} Dehn twist $D: \cal G \to \cal G$, 
and the 
latter 
is unique 
(on the level of the induced outer automorphisms) 
up to conjugation with graph-of-groups isomorphisms, see Theorem \ref{uniqueness}. We will hence assume below always that 
any 
2-level Dehn twist $H: \cal G \to \cal G$ comes for every vertex group $G_V$ of $\cal G$ with an efficient Dehn twist $D_V: \cal G_V \to \cal G_V$ and an identification $G_V \cong 
\pi_1 \cal G_V$
such that $D_V$ induces the outer automorphism defined by the vertex group automorphism $H_V: G_V \to G_V$.

\begin{rem}
\label{clarification}
(1)
In the above set-up, the suppression of a local base point $v_V \in V(\cal G_V)$ in the identification $G_V \cong \pi_1 \cal G_V$ (rather than specifying $G_V \cong \pi_1 (\cal G_V, v_V)$), and 
naming only 
the condition $\hat D_V = \hat H_V$ (rather than specifying it to  $D_V = H_{*v_V}$), is not notational sloppiness, but rather has been done purposefully.

Indeed,
we recall from Proposition \ref{independence} that for any edge $E$ of $\cal G$, say with terminal vertex $V =\tau (E)$, in order to determine the outer automorphism $\hat H$ induced by $H$, it suffices to specify a word $\delta^*(E) \in \Pi^*(\cal G_V)$. Indeed, the $D_V^{-1}$-equivalence class of $\delta^*(E)$ is sufficient, see Corollary \ref{class-dependence}, so that we will notationally not distinguish between the word $\delta^*(E)$ and the element $\delta(E)$ used in Proposition \ref{independence} to specify $H$, and simply refer to either as the ``correction term'' of $E$. Whenever the only case occurs where the difference between $\Pi^*(\cal G_V)$ and $\Pi(\cal G_V)$ matters, i.e. if $\delta(E) = 1$, we will be careful and specify $1$ to $1_v$ for 
the appropriate 
vertex $v$ of $\cal G_V$.

\smallskip
\noindent
(2)
We also recall (see Definition \ref{locally-zero}) that for 
the edge $E$ the notion of being ``locally zero'' is well defined, without having specified the base point $v_V$ for the graph-of-groups $\cal G_{V}$.
\end{rem}

In the following definition the fact that $\pi_1 \cal G$ is a free group is only assumed by practical reasons for this paper; the notion of efficient 2-level Dehn twists makes also sense for more general groups.

\begin{defn}
\label{2-level-efficient}
A 2-level Dehn twist $H: \cal G \to \cal G$  is called {\em efficient} if $\pi_1 \cal G$ is free of finite rank 
$N \geq 2$ 
and if the following conditions are satisfied:
\begin{enumerate}
\item
For any edge $E$ of $\cal G$ precisely one of the two, $E$ or $\bar E$, is locally zero
(i.e. 
precisely one of the two correction terms, $\delta(E) \in G_{\tau(E)}$ or $\delta(\bar E) \in G_{\tau(\bar E)}$, 
is $\cal H_V^{-1}$-conjugate to an element of $\cal G_V$-length 0, 
for $V = \tau(E)$ or $V = \tau(\bar E)$ respectively).

We say that $E$ is {\em forward oriented} if 
$E$
is not locally zero, and we define $E^+(\Gamma(\cal G)) = E^+(\cal G)$  to be the orientation on $\Gamma(\cal G)$ 
which contains all such forward oriented edges.
\item
For any two 
distinct forward oriented  
edges $E$ and $E'$ of $\cal G$ with common terminal vertex $V := \tau(E) = \tau(E')$, the correction terms $\delta(E)$ and $\delta(E')$ are not 
$D_V^{-1}$-conjugate.
\end{enumerate}
\end{defn}



Given any 2-level Dehn twist $H_0: \cal G_0 \to \cal G_0$, we can iteratively transform $\cal G_0$ and $H_0$ through intermediate 2-level Dehn twists $H_1: \cal G_1 \to \cal G_1$, $H_2: \cal G_2 \to \cal G_2$, etc, with canonical isomorphisms $\pi_1 \cal G_j \cong \pi_1 \cal G_{j+1}$ that induce $\hat H_j = \hat H_{j+1}$, to obtain after finitely many steps a 2-level Dehn twist $H_m: \cal G_m \to \cal G_m$ which is efficient. 

The modifications, employed in this procedure to pass from $H_j: \cal G_j \to \cal G_j$ to $H_{j+1}: \cal G_{j+1} \to \cal G_{j+1}$, are all of one of the following four types:

\begin{enumerate}
\item
Subdivide an edge $E$ by introducing a new vertex with trivial vertex group. 
Choose the correction terms on the subdivided edges to be trivial at the new vertex, and to coincide with $\delta(E)$ or $\delta(\bar E)$ otherwise.

This subdivision is in particular always done if both $E$ and $\bar E$ are not locally zero.

\item
Contract an edge $E$, if both $E$ and $\bar E$ are locally zero, through a blow-up of the local graph-of-groups automorphisms $D_{\tau(E)}$ and $D_{\tau (\bar E)}$ along $E$ as introduced in \cite{Ye01}. Subsequently make the resulting local Dehn twist on the new blown-up vertex group again efficient.

\item
For any edge $E$ with with terminal vertex $V$ and  
correction term $\delta(E)$ 
which is 
not
$D_V^{-1}$-zero, 
if there is any second edge $E' \neq E$, also with terminal vertex $V$, such that $\delta(E)$ is 
$D_V^{-1}$-conjugate to $\delta(E')$, one performs 
$D_V^{-1}$-conjugation on $\delta(E)$ to obtain $\delta(E) = \delta(E')$. 

\item
If two 
distinct 
edges $E$ and $E'$ with 
common 
terminal vertex $V$ have equal correction 
term 
$\delta(E) = \delta(E')$ which is not 
$D_V^{-1}$-zero, 
we first perform a subdivision of $E$ and $E'$ as in modification (1) above, so that 
we can now assume that $\delta(\bar E) = \delta(\bar E') = 1$
and $V_1 := \tau(\bar E) \neq V_2 := \tau(\bar E')$. 

We then fold $E$ onto $E'$ and identify $V_1$ and $V_2$ to get a new vertex $V'$ with trivial vertex group.
\end{enumerate}

Any such modification does not increase the number of edges $E$ which are not locally zero. However, in the process of doing our modifications, one eventually decreases their number (through modifications (3) and (4)) until any two edges $E$ and $E'$ with common terminal vertex $V$ have correction terms in distinct 
$D_V^{-1}$-conjugacy classes. We then finish the procedure by applying iteratively the modification (2) finitely many times. This shows:

\begin{prop}
\label{existence}
${}^{}$
For every 2-level Dehn twist with fundamental group $\FN$ there exists an efficient 2-level Dehn twist which
defines the same outer automorphism of $\FN$.
\end{prop}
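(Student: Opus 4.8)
The plan is to show that the four modifications, applied in the order and with the termination rule indicated in the paragraph preceding the statement, produce after finitely many steps a 2-level Dehn twist satisfying both conditions of Definition \ref{2-level-efficient}, while preserving the induced outer automorphism at every stage. The first task is therefore to verify, modification by modification, that each of the four operations yields again a legitimate 2-level Dehn twist $H_{j+1}: \cal G_{j+1} \to \cal G_{j+1}$ with a canonical marking isomorphism $\pi_1 \cal G_j \cong \pi_1 \cal G_{j+1}$ inducing $\hat H_j = \hat H_{j+1}$. For modifications (1) and (4) this is essentially a Bass--Serre / graph-of-groups computation: subdividing an edge with a new trivial vertex group, and folding two edges with equal non-locally-zero correction term across an identified trivial vertex, are classical elementary equivalences that do not change $\pi_1 \cal G$ nor the outer class of $H$ (here the hypothesis that all edge groups $G_E$ are trivial, and in modification (4) the preliminary subdivision arranging $\delta(\bar E) = \delta(\bar E') = 1$, are what make the fold well defined). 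For modification (3) the invariance of $\hat H$ under replacing $\delta(E)$ by a $D_V^{-1}$-conjugate representative is exactly Corollary \ref{class-dependence}. For modification (2) I would invoke the blow-up construction for local Dehn twist automorphisms along an edge from \cite{Ye01}, together with the fact that an efficient Dehn twist representing a given vertex-group Dehn twist automorphism always exists (the result recalled from \cite{CL99} in section \ref{2.4}); the subsequent ``make efficient again'' step is justified by the same result.

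Next I would address the orientation bookkeeping required by condition (1) of the definition. Condition (1) demands that for every geometric edge exactly one of $E, \bar E$ be locally zero. The role of modification (1) is precisely to handle the failure of this dichotomy on the ``too many'' side: whenever both $E$ and $\bar E$ are not locally zero, subdividing inserts a trivial vertex and splits the correction data so that on one of the two new half-edges the correction term becomes trivial, hence locally zero, restoring the dichotomy on each resulting edge. Symmetrically, modification (2) handles the ``too few'' side: whenever both $E$ and $\bar E$ are locally zero, the edge is contracted and disappears. After these two operations have been applied wherever applicable, every surviving edge has exactly one of its two orientations locally zero, which both establishes condition (1) and defines the orientation $E^+(\cal G)$ of forward oriented edges.

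With condition (1) secured, condition (2) is arranged by modifications (3) and (4): at any vertex $V$, if two distinct forward oriented edges $E, E'$ carry $D_V^{-1}$-conjugate correction terms, modification (3) first normalizes them to be literally equal, $\delta(E) = \delta(E')$, and then modification (4) folds $E$ onto $E'$, strictly reducing the number of forward oriented edges at $V$ in that $D_V^{-1}$-conjugacy class. Iterating, one reaches a graph-of-groups in which any two forward oriented edges with a common terminal vertex have correction terms lying in distinct $D_V^{-1}$-conjugacy classes, which is condition (2).

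The main obstacle, and the crux of the termination argument, is to exhibit a well-founded complexity that strictly decreases (or at least does not increase, with strict decrease forced infinitely often) under the interleaving of all four modifications. The subtlety is that modification (2), the blow-up/contraction, can alter the local graphs-of-groups $\cal G_V$ and hence potentially re-create configurations that modifications (3) and (4) had already removed, while conversely the subdivision in (1) and (4) increases the number of vertices and edges; so one cannot use a single naive count. The clean way to organize this, as stated in the passage, is to observe first that \emph{no} modification increases the number of forward oriented (i.e. not locally zero) edges, and that modifications (3) and (4) strictly decrease it; one runs (1), (3), (4) until that number stabilizes and conditions (1) and (2) both hold, and only \emph{then} applies (2) iteratively finitely many times to contract the remaining doubly-locally-zero edges, each contraction strictly decreasing the total edge count while, by the ordering, no longer being able to reintroduce violations of conditions (1) or (2). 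Verifying that this staged ordering genuinely terminates --- in particular that the final round of (2)-contractions does not resurrect a condition-(2) failure that would require looping back --- is where the care is needed, and is the step I would write out most carefully; everything else reduces to the elementary-equivalence lemmas and the existence of efficient local Dehn twists already quoted above. This yields the required efficient 2-level Dehn twist representing the same outer automorphism of $\FN$, proving Proposition \ref{existence}.
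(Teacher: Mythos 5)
Your proposal follows essentially the same route as the paper's own (very terse) argument: the same four modifications with marking isomorphisms preserving the outer class, condition (1) of Definition \ref{2-level-efficient} secured by modifications (1)/(2), condition (2) by (3)/(4), and termination organized around the non-increasing count of not-locally-zero edges, with modification (2) deferred to a final stage. The one point you flag as needing the most care --- that the blow-up/contraction step (2) might recreate condition-(2) failures at the merged vertex --- is also left implicit in the paper, which simply asserts that finitely many applications of (2) finish the procedure and defers the detailed (algorithmic) treatment to \cite{LY2}.
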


In \cite{LY2} we will give more details which will show that the 
procedure described here is in fact algorithmic.

\begin{rem}
\label{Rod2}
Moritz Rodenhausen has obtained in his thesis \cite{R} results that seem to come very close to what has been presented in this section, in a context that is more general (polynomial growth automorphisms of arbitrary degree), and in a language that is not far from ours, but in its technical details sufficiently different to make a formal translation non-evident. In any case, it seems more than likely that the algorithmic device in his section 8 is strongly related to the above explained procedure, and in particular his Theorem 8.6 to our Proposition \ref{existence}.
\end{rem}


\section{Cancellation bounds}

\subsection{Some basic cancellation facts on graph-of-groups}

${}^{}$

Recall that 
for any graph-of-groups $\cal G$ and any reduced word 
$$W = w_0 t_1 w_1 \ldots w_{r-1} t_r w_r \in \Pi(\cal G)$$
we denote by $|W| := r$ the {\em $\cal G$-length} of $W$.

For any two reduced 
words $V, W \in \Pi(\cal G)$ we say that {\em $V$ cancels against $W$ in the product $V W$} 
if one has $|VW| = 0$.

Furthermore, we say that {\em the cancellation in a family of products $
W_1(t) \cdot W_2(t) \cdot \ldots \cdot W_m(t)$ is bounded}, for reduced words $W_1(t), $ $W_2(t), $ $\ldots, W_m(t) \in \Pi(\cal G)$, if there exists a constant $K \geq 0$ independent of the parameter $t$ such that:
$$|W_1(t)| + |W_2(t)| + \ldots + |W_m(t)| - |W_1(t) W_2(t) \ldots W_m(t)| \leq K$$
holds for any value of $t$.

\begin{rem}
\label{cancellation-isomorphism}
For any graph-of-groups isomorphism $H: \cal G \to \cal G'$ the above notions are preserved under $H$. For example, if $V$ cancels against $W$ in the product $V W$, then $H(V)$ cancels against $H(W)$ in the product $H(V W)$.
\end{rem}

\begin{defn}
\label{bondedness}
For any 
graph-of-groups 
$\cal G$ we say that edges $e$ and $e'$ are {\em bonded} if they terminate at the same vertex $v = \tau(e) = \tau(e')$, and if there exist non-trivial elements $g_e \in G_{e}$ and $g_{e'} \in G_{e'}$ with images $f_{e}(g_e)$ and $f_{e'}(g_{e'})$ that are conjugate in the vertex group $G_v$. 

To be specific, 
we call 
an element $h \in G_v$ with $f_{e}(g_e) = h f_{e'}(g_{e'}) h^{-1}$ 
a {\em bond conjugator}, while $g_e$ and $g_{e'}$ are called the {\em edge bonders}.

A path $e_1 e_2 \ldots e_r$ in $\cal G$ is a {\em bonded path} if for subsequent indices the edges $e_i$ and $\bar e_{i+1}$ are bonded, with a family of edge bonders that satisfy $g_{e_i} = g_{\bar e_i}$ for any $i = 2, \ldots, r-1$.

A {\em bonded loop} is analogously defined, with index $i$ understood cyclically modulo $r$.

Any trivial path or trivial loop is formally defined to be bonded.

\end{defn}

\begin{lem}
\label{unique-cancellation}
Let $\cal G$ be a graph-of-groups, and let $V, W \in \Pi(\cal G)$ be reduced words with common underlying path which is not bonded, and which terminates in some vertex $v$. Assume that $V$ cancels against $W^{-1}$ in the product $VW^{-1}$. Then for no $u \in G_v \smallsetminus \{1\}$ the element $Vu$ cancels against $W^{-1}$ in the product $VuW^{-1}$.
\end{lem}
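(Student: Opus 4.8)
The plan is to exploit the hypothesis that the common underlying path of $V$ and $W$ is \emph{not} bonded, and to derive a contradiction from the assumption that both $VW^{-1}$ and $VuW^{-1}$ have $\cal G$-length $0$. Write $V = w_0 t_1 w_1 \cdots w_{r-1} t_r w_r$ and $W = w'_0 t_1 w'_1 \cdots w'_{r-1} t_r w'_r$, where the underlying paths agree since $V$ and $W$ share the same path $\gamma = e_1 \cdots e_r$ terminating in $v = \tau(e_r)$. The statement $|VW^{-1}| = 0$ means that $VW^{-1}$ reduces completely, which by the normal form in Remark \ref{gog-normal-form} forces, at each stage of cancellation, the relation $w_i (w'_i)^{-1} \in f_{e_{i+1}}(G_{e_{i+1}})$ (and similarly for the innermost syllable involving $w_r (w'_r)^{-1}$). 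First I would unwind this cancellation cascade, writing down explicitly the sequence of edge-group elements $g_1, g_2, \ldots$ produced as the two words telescope from the outside in.

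The key step is then to compare the two cancellations $VW^{-1}$ and $(Vu)W^{-1}$. Since $Vu$ differs from $V$ only in its final vertex-group factor, namely $w_r$ is replaced by $w_r u$, the first $r-1$ stages of the cancellation are forced to be \emph{identical} in both products: the outer syllables $w_0, \ldots, w_{r-1}$ and $w'_0, \ldots, w'_{r-1}$ are unchanged, so the same edge-group elements $g_1, \ldots, g_{r-1}$ appear, and both products reduce down to the innermost confrontation at the vertex $v$. At that innermost stage, $|VW^{-1}| = 0$ gives $w_r (w'_r)^{-1}$ conjugate-into or lying in an appropriate $f_{e_r}(G_{e_r})$-coset, and $|VuW^{-1}| = 0$ gives the same for $w_r u (w'_r)^{-1}$. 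Dividing these two membership conditions, I would deduce that $u$ itself, conjugated by the accumulated cancellation data, lands inside the image of an edge group at $v$ — and, crucially, that this produces a nontrivial \emph{bond} along the path $\gamma$, because the edge-group elements on the two sides of the equation are forced to be conjugate in $G_v$ via an element built from the $w_i$'s.

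The main obstacle I expect is the bookkeeping at the turning point of the path, i.e.\ making precise exactly which edge group $u$ is forced into and tracking the bond conjugator. The cleanest route is to phrase the argument inductively on $r$: if the outermost syllables $t_1$ and $t_r$ (as read from $V$ and $W^{-1}$) already cancel, peel them off together with $w_0 (w'_0)^{-1} \in f_{e_1}(G_{e_1})$, which by Definition \ref{bondedness} exhibits $e_1$ and $\bar e_r$ (or the relevant adjacent pair) as bonded with a specified edge bonder; the non-bondedness hypothesis then either terminates the induction immediately or lets me absorb $u$ into the remaining shorter problem. The delicate point is that a nontrivial $u \in G_v \smallsetminus \{1\}$ surviving an otherwise identical cancellation is precisely what would certify that $\gamma$ is bonded with a coherent family of edge bonders $g_{e_i} = g_{\bar e_i}$, contradicting the hypothesis. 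I would make sure the edge-bonder-compatibility condition $g_{e_i} = g_{\bar e_i}$ from Definition \ref{bondedness} is exactly what the telescoping cancellation yields, so that the contradiction is with \emph{bondedness of the whole path} and not merely of a single vertex.
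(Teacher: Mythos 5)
Your overall strategy does coincide with the paper's: apply the normal form (Remark \ref{gog-normal-form}) to the two length-zero products, compare the resulting certificates of edge-group elements, and show that the comparison exhibits the underlying path $\gamma = e_1\ldots e_r$ as bonded, contradicting the hypothesis. However, the central mechanism you propose is false, and it is precisely the step you flagged as delicate. You claim that, since $Vu$ differs from $V$ only in its last syllable, ``the first $r-1$ stages of the cancellation are forced to be identical in both products,'' with the same edge-group elements $g_1,\ldots,g_{r-1}$, so that only one membership condition at the innermost stage needs to be divided. This cannot happen for $u \neq 1$: the reduction of $VW^{-1}$ propagates from the junction at $v$ \emph{outward} (the only reducible syllable at the start is $t_r\bigl(w_r (w'_r)^{-1}\bigr)t_r^{-1}$), so each edge-group element of the cascade is determined, via the injective edge maps, by the element produced at the preceding, more interior, stage. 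Consequently, if the two cascades agreed at even one stage $k \leq r-1$, then injectivity applied to the stage relations $f_{e_k}(g_k) = w_k f_{\bar e_{k+1}}(g_{k+1}) (w'_k)^{-1}$ and $f_{e_k}(h_k) = w_k f_{\bar e_{k+1}}(h_{k+1}) (w'_k)^{-1}$ would force agreement at stage $k+1$, hence inductively at stage $r$, i.e.\ $u=1$. So for $u \neq 1$ the two certificates differ at \emph{every} stage; your ``identical stages'' picture is internally inconsistent. Moreover, the division performed only at the innermost stage yields just one membership, $w_r u w_r^{-1} \in f_{e_r}(G_{e_r})$, which says nothing about consecutive edges $e_i$, $\bar e_{i+1}$ being bonded along all of $\gamma$; a proof organized this way never invokes non-bondedness of the whole path, which is a red flag, since the statement fails without that hypothesis whenever edge groups are non-trivial (e.g.\ $V = W = t_e$ for a loop-edge $e$, with $u = f_e(c)$, $c\neq 1$, gives $VuW^{-1} = f_{\bar e}(c)$ of length $0$).

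The repair is the stage-by-stage comparison along the \emph{entire} path, which is what the paper does in streamlined form: the first cancellation gives $W = u'V$ (so $W$ has the same syllables $w_k$ as $V$, and the certificate of the first product is trivial), and the normal form applied to $VuW^{-1}$ then directly produces elements $u_k \in G_{e_k}$, all non-trivial because $u \neq 1$ and the edge maps are injective, satisfying $f_{e_k}(u_k) = w_k f_{\bar e_{k+1}}(u_{k+1}) w_k^{-1}$ for $k = 1,\ldots,r-1$. These are verbatim the bonding relations of Definition \ref{bondedness}, with bond conjugators $w_k$ and a compatible family of edge bonders ($g_{e_{k+1}} = g_{\bar e_{k+1}} = u_{k+1}$), so $\gamma$ is bonded --- the desired contradiction. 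In your language: the edge bonders are the stage-wise \emph{differences} of the two certificates, and they are non-trivial at every stage of $\gamma$, not only at $v$.
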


\begin{proof}
Let $V = w_0 t_1 w_1 \ldots w_{r-1} t_r w_r$ 
be the given reduced word, based on a path $e_1 \ldots e_r$. From the assumption that $V$ cancels against $W^{-1}$ in the product $VW^{-1}$ we know that $W = 
u' V
= u' w_0 t_1 w_1 \ldots w_{r-1} t_r w_r$ for some $u' \in G_{\tau(\bar e_1)}$. Hence, if $Vu$ cancels against $W^{-1}$ in the product $VuW^{-1}$, then from the normal form for reduced words in $\Pi(\cal G)$ 
(see Remark \ref{gog-normal-form})
we know that for $k = 1, \ldots , r$ there exist elements $u_{k} \in G_{e} = G_{\bar e}$, such that $f_{e_k}(u_k) = w_k f_{\bar e_{k+1}}(u_{k+1}) w_k^{-1}$ for all $k = 1, \ldots , r-1$, and $f_{e_r}(u_r) = u$. 
From the assumption $u \neq 1$ it follows hence that $u_k \neq 1$ for all indices $k$.
But then the path $e_1 \ldots e_r$ is bonded, which contradicts our assumption.
\end{proof}

\begin{lem}
\label{non-bonded}
Let $D:\cal G \to \cal G$ be an efficient Dehn twist,
with $\pi_1 \cal G = \FN$ and $N \geq 2$. 
Then no 
non-trivial 
loop in $\cal G$ is bonded.
\end{lem}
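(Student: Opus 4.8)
The plan is to argue by contradiction: assuming some non-trivial loop of $\cal G$ is bonded, I will read off from the bonding data a \emph{hyperbolic} element of $\pi_1\cal G$ that commutes with a non-trivial \emph{elliptic} one, and then derive a contradiction from the fact that $\pi_1\cal G \cong \FN$ is free. Notably, this uses only freeness of $\pi_1\cal G$ and injectivity of the edge homomorphisms, not the efficiency hypotheses; efficiency is merely the ambient context.

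So suppose $\gamma = e_1 e_2 \cdots e_r$ is a non-trivial loop, taken as a reduced closed path of combinatorial length $r \geq 1$, which is bonded. Write $w_i := \tau(e_i)$, so that $w_i = \tau(\bar e_{i+1})$ with indices read modulo $r$. Bondedness provides edge bonders $g_i \in G_{e_i} \smsm \{1\}$ satisfying the consistency condition $g_{e_i} = g_{\bar e_i}$, together with bond conjugators $h_i \in G_{w_i}$ such that
$$f_{e_i}(g_i) = h_i\, f_{\bar e_{i+1}}(g_{i+1})\, h_i^{-1}$$
holds in $G_{w_i}$ for every $i$. Setting $a_i := f_{e_i}(g_i)$, injectivity of the edge homomorphisms and $g_i \neq 1$ give $a_i \neq 1$. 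Using the defining relation $f_{\bar e}(g) = t_e f_e(g) t_e^{-1}$ of the path group $\Pi(\cal G)$, I rewrite the displayed identity as $a_i = (h_i t_{e_{i+1}})\, a_{i+1}\, (h_i t_{e_{i+1}})^{-1}$.

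Composing these around the loop yields $a_1 = P\, a_1\, P^{-1}$, where
$$P = (h_1 t_{e_2})(h_2 t_{e_3}) \cdots (h_{r-1} t_{e_r})(h_r t_{e_1})$$
is a closed word based at $w_1$, hence an element of $\pi_1(\cal G, w_1)$; in particular $P$ commutes with $a_1$. The underlying path of $P$ is the cyclic permutation $e_2 e_3 \cdots e_r e_1$ of $\gamma$, whose consecutive edges never backtrack because $\gamma$ is reduced. Thus $P$ is cyclically reduced with $|P|_\cal G = r \geq 1$, so $P$ is conjugate into no vertex group, i.e. $P$ is hyperbolic, whereas $a_1 \in G_{w_1}$ is a non-trivial elliptic element.

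To finish, I invoke freeness. Since $\pi_1\cal G$ is free, the centralizer of the non-trivial element $a_1$ is infinite cyclic, say $\langle \rho\rangle$. As $P$ commutes with $a_1$, it lies in this centralizer, so $a_1 = \rho^{p}$ and $P = \rho^{q}$ with $p,q \neq 0$. In a graph-of-groups an element is elliptic precisely when one, equivalently every, of its non-trivial powers is; hence $a_1 = \rho^{p}$ elliptic forces $\rho$, and therefore $P = \rho^{q}$, to be elliptic, contradicting the hyperbolicity of $P$. This contradiction proves the lemma. I expect the only delicate points to be the bookkeeping that collapses the family of bonding relations into the single commutation $[P,a_1]=1$, and the verification that reducedness of $\gamma$ makes $P$ genuinely hyperbolic; once these are secured, the cyclic-centralizer dichotomy in the free group closes the argument.
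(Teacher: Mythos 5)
Your proof is correct and is essentially the paper's own argument, which (in far more compressed form) likewise extracts from a bonded non-trivial loop a commuting pair --- an element based on the loop and the image of an edge bonder that gets ``shifted'' around the loop --- and then contradicts freeness of $\pi_1\cal G \cong \FN$ via the elliptic/hyperbolic dichotomy. One caveat: to know that the wrap-around pair $(e_r,e_1)$ in the path underlying $P$ does not backtrack you need $\gamma$ to be \emph{cyclically} reduced, not merely reduced; this is the correct reading of ``loop'' here (and is forced anyway, since for an arbitrary closed path the statement is false: $e\bar e$ is bonded whenever $G_e \neq \{1\}$).
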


\begin{proof}
This is a direct consequence of Definition \ref{bondedness} and the observation that a 
non-trivial 
bonded loop leads directly to commuting elements in $\pi_1\cal G$ which are not powers of each other, as one is based on the loop, and the other one is represented by the edge bonder, which can be ``shifted'' around the loop. 

However, this is impossible as by assumption $\pi_1 \cal G$ is a free group $\FN$ if rank $N \geq 2$.
\end{proof}

\begin{lem}
\label{D-image-cancellation}
Let $D: \cal G \to \cal G$ be an efficient Dehn twist, 
with $\pi_1 \cal G = \FN$ and $N \geq 2$, 
and let $V, W \in \Pi(\cal G)$ be reduced words 
based on 
non-trivial  
loops 
which are proper powers. 
If $V$ cancels against 
$W^{-1}$ in $VW^{-1}$, 
then for no $t \neq 0$ the element $V$ cancels against 
$D^t(W^{-1})$ in $VD^t(W^{-1})$.
\end{lem}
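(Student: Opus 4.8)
The plan is to argue by contradiction: assuming that $V$ cancels against $D^t(W^{-1})$ for some $t \neq 0$, I will show that the underlying loop must be bonded, contradicting Lemma \ref{non-bonded}.

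First I would reduce to the case $V = W$. Since $|VW^{-1}| = 0$, the reduced words $V$ and $W$ run over one and the same underlying loop $\gamma$, and one has $V = g\,W$ for a suitable vertex element $g \in G_{\tau(W)}$. As left multiplication by $g$ does not change the $\cal G$-length, $|V D^t(W^{-1})| = |g\,W D^t(W^{-1})| = |W D^t(W^{-1})|$, so $V$ cancels against $D^t(W^{-1})$ if and only if $W$ does. Hence it suffices to prove $|W D^t(W^{-1})| > 0$ for all $t \neq 0$.

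Next, the mechanism. From Definition \ref{Dehn-twist} (and $D_*$ fixing vertex elements) one gets $D_*^t(t_e) = t_e f_e(z_e^{\,t})$, so that $D^t(W)$ is obtained from $W = w_0 t_{e_1} w_1 \cdots t_{e_r} w_r$ simply by inserting the twistor $f_{e_i}(z_{e_i}^{\,t})$ right after each stable letter $t_{e_i}$. Because the edge groups are infinite cyclic and, by efficiency (condition (5), no unused edge), every twistor satisfies $z_{e_i} \neq 1$, these inserted elements are nontrivial whenever $t \neq 0$. Now I peel the cancellation in $W D^t(W^{-1})$ from the innermost stable letter outward, exactly as in the proof of Lemma \ref{unique-cancellation}: at edge $e_i$ the only way the nontrivial twistor $f_{e_i}(z_{e_i}^{\,\pm t})$ can be absorbed into the ongoing cancellation is that the vertex element $w_i$ conjugates $f_{\bar e_{i+1}}(\,\cdot\,)$ into $f_{e_i}(G_{e_i})$; by Lemma \ref{unique-cancellation} this is forced, since otherwise the insertion of a nontrivial vertex element would obstruct the complete cancellation. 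In the degenerate backtrack case $e_i = \bar e_{i+1}$ one has $w_i \notin f_{e_i}(G_{e_i})$ by reducedness, while the absorption condition would put $w_i$ into the centralizer $C_{e_i} = f_{e_i}(G_{e_i})$, a contradiction; so $e_i \neq \bar e_{i+1}$ and, by Definition \ref{bondedness}, the consecutive edges $e_i$ and $\bar e_{i+1}$ are genuinely bonded. Thus the whole path $\gamma$ is a bonded path.

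Finally, this is where the proper-power hypothesis is used to upgrade a bonded path to a bonded loop. Writing $\gamma = \gamma_0^{\,k}$ with $\gamma_0 = e_1 \cdots e_s$ and $k \geq 2$, the edges repeat, $e_{s+j} = e_j$, so the bonding condition obtained at each seam index $i = s, 2s, \ldots$ relates $e_s$ to $\bar e_{s+1} = \bar e_1$ and thereby supplies exactly the wrap-around bond of $\gamma_0$ that a single traversal would miss; together with the within-period bonds this exhibits the non-trivial primitive loop $\gamma_0$ as a bonded loop, contradicting Lemma \ref{non-bonded}. I expect the main obstacle to be precisely the promotion just described: the per-edge relations coming out of the cancellation carry a nontrivial ``twistor defect'' $z_{e_i}^{\,\pm t}$ and therefore are not literally a consistent family of edge bonders in the sense of Definition \ref{bondedness}; reconciling these defects into one consistent family of nontrivial bonders around $\gamma_0$ (controlling orientations and ruling out that the defects collapse a bonder to $1$) is the delicate point, and it is here that efficiency --- nontriviality of the twistors and the absence of positive bonding between distinct edges at a common vertex --- must be combined with the periodicity forced by $\gamma = \gamma_0^{\,k}$.
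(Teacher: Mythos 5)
Your strategy is the same as the paper's: reduce to the case where $V$ and $W$ differ by a vertex-group element, note that $D_*^t$ inserts the nontrivial twistor powers $f_{e_i}(z_{e_i}^t)$ after each stable letter, peel the cancellation in $W D_*^t(W^{-1})$ from the inside out to extract conjugation relations between edge-group elements at consecutive edges, and then use the proper-power hypothesis to wrap these bonds around the underlying primitive loop, contradicting Lemma \ref{non-bonded}. (Your backtrack discussion for $e_i = \bar e_{i+1}$ is correct but not actually needed, since Definition \ref{bondedness} does not require the two bonded edges to be distinct.)

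However, there is a genuine gap, and it sits exactly at the point you yourself flag as ``the delicate point'' and then leave unresolved. The element that must be absorbed at stage $i$ of the peeling is not the raw twistor $f_{\bar e_{i+1}}(z_{e_{i+1}}^{\pm t})$ but an accumulated element $f_{\bar e_{i+1}}(a_{i+1})$, where $a_{i+1}$ is the product of the twistor defect with the image of $a_{i+2}$ under the previous forced conjugation. If some $a_{i+1}$ were trivial, then the cancellation at stage $i$ would proceed automatically, no bond between $e_i$ and $\bar e_{i+1}$ would be forced, and the chain of bonds --- hence the bonded loop and the contradiction --- collapses. The paper closes precisely this hole with a sign argument: by the no-proper-power condition (condition (3) of efficiency), each forced relation can be written as $w_k f_{\bar e_{k+1}}(z_{k+1}^t)w_k^{-1} = f_{e_k}(z_k)^{m_k}$ with $m_k \in t\,\Z$, and all $m_k$ must have the same sign as $t$, because an opposite sign would make $e_k$ and $\bar e_{k+1}$ positively bonded, contradicting condition (4) of efficiency; consequently the accumulated exponents all have the same sign, so they ``can never add up to $0$'', every bond is genuine, and nontrivial bonders propagate consistently around the loop. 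Announcing that efficiency and periodicity ``must be combined'' to rule out collapse is a correct diagnosis, but it is not a proof of that step; as written, your argument establishes bondedness of the path only under the unverified assumption that no accumulated element dies, and that assumption is the entire content of the lemma's hardest step.
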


\begin{proof}
As in the above proof of Lemma \ref{unique-cancellation} we have reduced words $V = $ $w_0 t_1 w_1 \ldots w_{r-1} t_r w_r$ and $W = 
u' V  
= u' w_0 t_1 w_1 \ldots w_{r-1} t_r w_r$ for some $u' \in G_{\tau(\bar e_1)}$, both based on a path $e_1 \ldots e_r$ which is assumed to be closed. We compute $D^t(W) = 
u' w_0 t_1 f_1(z_1)^t w_1 \ldots w_{r-1} t_r f_r (z_r)^t w_r$, where we use the convention $t_k := t_{e_k}$ and $z_k := z_{e_k}$.

Thus, if $V$ cancels against $D^t(W^{-1})$ in $VD^t(W^{-1})$, then one obtains iteratively for all $k = 1, \ldots, r-1$ that $w_k f_{\bar e_{k+1}}(z_{k+1}^t) w_k^{-1}$ is equal to  $f_k(z_k)^{m_k}$, for some $m_k  \in t \Z$, since by the definition of an efficient Dehn twist both, the generators of the cyclic subgroups $f_{e_k}(G_{e_k})$ and $w_k f_{\bar e_{k+1}}(G_{e_{k+1}}) w_k^{-1}$ are not proper powers in their ambient vertex group $G_{\tau(e_k)}$. Furthermore we see iteratively that all $m_k$ must have the same sign as $t$, as otherwise $e_k$ and $e_{k+1}$ were positively bonded, contradicting the definition of an efficient Dehn twist. In particular the exponents can never add up to 0.

Thus any subsequent edges in the 
path 
$e_1 \ldots e_r$ are bonded.
But by the ``proper powers'' assumption on $e_1 \ldots e_r$ in the statement of our proposition, this path must run more than once around some non-trivial loop, which hence must also be bonded. But this
contradict Lemma \ref{non-bonded}.
\end{proof}

\subsection{Application to iterated $D$-products}

${}^{}$

Recall from 
section \ref{2.1} 
that for any group automorphism $F: G \to G$ and any integer $t \geq 0$ we use 
the following notation:
$$F^{(t)}(g) := g F(g) F^2(g) \ldots F^{t-1}(g)$$
and
$$F^{(s,t)}(g) := F^{s}(g) F^{s+1}(g) \ldots F^{t-1}(g)$$
for any exponents 
$s < t$.

\begin{prop}
\label{bounded-cancellation}
Let $D: \cal G \to \cal G$ be an efficient Dehn twist, and let $W_1, W_2 \in \Pi(\cal G)$ be reduced words based on 
non-trivial 
loops.

Assume that 
for some $U \in \Pi(\cal G)$ and  $t \geq 0$ the cancellation (with respect to $\cal G$-length) in the family of products
\begin{equation}
\label{iterated-product}
D^{(t)}_*(W_1) \cdot D_*^t(U) \cdot (D^{(t)}_*(W_2))^{-1}
\end{equation}
is unbounded.
Then $W_1$ and $W_2$ are $D$-conjugate to each other. More precisely, one has:
$$U^{-1} W_1 D_*(U) = W_2$$
\end{prop}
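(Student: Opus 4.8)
The statement to prove is Proposition~\ref{bounded-cancellation}. The strategy is to reduce the asserted unbounded cancellation in the products $D^{(t)}_*(W_1) \cdot D_*^t(U) \cdot (D^{(t)}_*(W_2))^{-1}$ to a cancellation statement about a single pair of reduced words in $\Pi(\cal G)$, and then invoke the rigidity of efficient Dehn twists encoded in Lemmas~\ref{unique-cancellation}, \ref{non-bonded}, and \ref{D-image-cancellation}. First I would unwind the iterated product $D^{(t)}_*$ explicitly. Since $D_*$ fixes all vertex-group elements and acts on a stable letter by $D_*(t_e) = t_e f_e(z_e)$, the words $D^{(t)}_*(W_1)$ and $D^{(t)}_*(W_2)$ are obtained from $W_1^{\,t}$ and $W_2^{\,t}$ (the $t$-fold concatenations along the underlying loops) by inserting twistor powers $f_{e_k}(z_k^{j})$ at each stable letter $t_{e_k}$, where the exponent $j$ grows with the position in the concatenation. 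The key observation is that, because $W_1$ and $W_2$ are based on \emph{non-trivial} loops and $D$ is efficient, Lemma~\ref{non-bonded} forbids these loops from being bonded, so the only way for the total $\cal G$-length to drop by an unbounded amount as $t \to \infty$ is for long initial/terminal matching subwords to cancel.

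\textbf{Key steps.} The cancellation in a length-$3$ product $A \cdot B \cdot C^{-1}$ being unbounded (as a function of $t$) forces, by the pigeonhole principle applied to the bounded combinatorial types of how the three reduced words overlap, that either $A$ cancels a growing tail against $B$, or $B$ against $C^{-1}$, or $A$ against $C^{-1}$ directly through a fixed-length core of $B = D_*^t(U)$. Since $|D_*^t(U)| = |U|$ is bounded independently of $t$, the middle factor cannot absorb unbounded cancellation on both sides without the outer two factors cancelling against each other; thus the essential phenomenon is unbounded cancellation of $D^{(t)}_*(W_1)$ against $D^{(t)}_*(W_2)$ across a bounded bridge. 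I would then argue that, for $t$ large, this pins down the underlying loops of $W_1$ and $W_2$ to agree up to cyclic permutation and orientation, and—crucially—forces the matching to persist after the twistor insertions. Here Lemma~\ref{D-image-cancellation} does the heavy lifting: it says that if $V$ cancels against $W^{-1}$ then $V$ cannot cancel against $D^t(W^{-1})$ for $t \neq 0$ when the loops are proper powers, which rules out spurious long cancellations created by the twistors rather than by genuine agreement of the base words. Lemma~\ref{unique-cancellation} then upgrades a length-cancellation to an exact vertex-group identity, removing the last ambiguity in the vertex-group factors $u$. Assembling these, the unbounded cancellation can only come from $W_1$ and $W_2$ being genuinely identified by conjugation through $U$, yielding $U^{-1} W_1 D_*(U) = W_2$, which is precisely $D$-conjugacy in the sense of Definition~\ref{H-conjugation}.

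\textbf{Main obstacle.} The hard part will be controlling the interaction between the growing twistor exponents and the matching of the two outer factors. Because the exponent $j$ attached to $f_{e_k}(z_k^{j})$ depends on position, the words $D^{(t)}_*(W_1)$ and $D^{(t)}_*(W_2)$ are not simply powers but carry a drifting decoration, so a naive ``the loops agree'' argument does not immediately give cancellation of the decorated words. The delicate point is to show that unbounded \emph{decorated} cancellation nevertheless forces the decorations to align, i.e. that the twistor powers encountered along the cancelling region on the $W_1$-side exactly match those on the $W_2$-side. I expect this to reduce, via the no-positive-bonding condition of efficiency (Definition, item (4)) and the proper-powers analysis inside the proof of Lemma~\ref{D-image-cancellation}, to showing that the accumulated twistor contributions cannot conspire to cancel unless the two loops are conjugate \emph{with compatible twistors}, which is exactly what $D$-conjugacy records. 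Once this alignment is established, extracting the precise equation $U^{-1} W_1 D_*(U) = W_2$ should be a direct normal-form computation in $\Pi(\cal G)$ using Remark~\ref{gog-normal-form}.
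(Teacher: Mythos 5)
Your plan follows the same road map as the paper's proof: the bounded middle factor $D_*^t(U)$, unbounded cancellation pushed onto the two outer factors, Lemma \ref{D-image-cancellation} to control the twistor decorations, and Lemma \ref{unique-cancellation} (applicable via Lemma \ref{non-bonded}) to convert a length cancellation into an exact identity. But two concrete devices that carry the paper's proof are absent, and the second is precisely the step you yourself flag as the ``main obstacle'' without resolving it, so as written this is a plan rather than a proof. First, the paper does not treat $U$ as a bridge to be dealt with at the end: it decomposes $U = U_1 U_2$ according to which outer factor absorbs which half, and replaces $W_1$, $W_2$ by twisted conjugates along $U_1$ and $U_2$ (using the telescoping identity $D_*^{(t)}(U_1^{-1} W_1 D_*(U_1)) = U_1^{-1} D_*^{(t)}(W_1) D_*^t(U_1)$), carried out in bounded-length increments so that $D$-reducedness of both words is preserved. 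This reduces everything to the two-factor case $U = 1$, and it is also exactly how the precise formula $U^{-1} W_1 D_*(U) = W_2$ is recovered at the end; your deferral of this to ``a direct normal-form computation'' hides the fact that the conjugator $U$ must be threaded through the whole cancellation analysis from the start.

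Second, and more seriously: your step ``this pins down the underlying loops of $W_1$ and $W_2$ to agree up to cyclic permutation and orientation'' does not address the scenario the paper actually has to rule out, namely that the underlying loops match combinatorially while $m = |W_1| \neq |W_2| = n$ (for instance one loop a proper power of the other, or equal tails traversed at different speeds); in that case the undecorated words $W_1^t$ and $W_2^t$ can share arbitrarily long common suffixes, and only the twistor decorations distinguish them. The paper's mechanism here is a block comparison: unbounded cancellation forces the suffix block $D_*^{(t-n,t)}(W_1)$ (of length $nm$) to cancel against $D_*^{(t-m,t)}(W_2)^{-1}$ (also of length $mn$), then the next blocks $D_*^{(t-2n,t-n)}(W_1)$ against $D_*^{(t-2m,t-m)}(W_2)^{-1}$, and so on; since consecutive blocks differ by applying $D_*^n$ respectively $D_*^m$, two successive block cancellations put you exactly in the situation forbidden by Lemma \ref{D-image-cancellation} with nonzero shift $m-n$ (this is where the proper-power hypothesis of that lemma earns its keep). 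That yields $|W_1| = |W_2|$, after which a one-factor peeling argument with $u := D_*^{t-1}(W_1)\,(D_*^{t-1}(W_2))^{-1}$ of length $0$ and Lemma \ref{unique-cancellation} gives $u = 1$, hence $W_1 = W_2$. Note also that your reading of Lemma \ref{D-image-cancellation} as ruling out ``spurious cancellations created by the twistors rather than by genuine agreement of the base words'' has the logic somewhat reversed: in the paper the lemma is applied when the base words do genuinely agree, to show that the decorations then force the two drift rates, i.e.\ the two lengths, to coincide. Supplying the block argument and the reduction to $U = 1$ would close the gap, and would in effect reproduce the paper's proof.
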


\begin{proof}
From the claimed statement we observe 
that one can assume without loss of generality that $W_1$ and $W_2$ are $D$-reduced, and that $U$ (and hence any $D_*^t(U)$) is reduced. 
Thus the unboundedness hypothesis of the cancellation in 
(\ref{iterated-product}) 
implies that there is a reduced product decomposition $U = U_1 U_2$ such that $D_*^t(U_1)$ cancels completely against the end of $D^{(t)}_*(W_1)$, and $D_*^t(U_2^{-1})$ against the end of $D^{(t)}_*(W_2)$.
Hence, through possibly replacing $W_1$ by $U_1 W_1 D_*(U_1^{-1})$ and $W_2$ by $U_2^{-1} W_2 D_*(U_2)$, we can furthermore assume the $U = 1$.
By doing these replacements iteratively, where at each step the replacement $U'_1 W_1 D_*({U'_1}^{-1})$ is taken such that $U'_1$ is an initial subword of $U_1$ of $\cal G$-length $|U'_1| \leq  |W_1|$ (and analogously for a terminal subword $U'_2$ of $U_2$) we see that these replacements will preserve the property that both, $W_1$ and $W_2$ are $D$-reduced.

By checking the $\cal G$-lengths $m = |W_1|$ and $n = |W_2|$, unbounded cancellation in the products 
$$P(t):= D^{(t)}_*(W_1) \cdot 
(D^{(t)}_*(W_2))^{-1}$$
implies that 
for some sufficiently large $t$ the suffix  
$D_*^{(t-n,t)}(W_1)$
of $D^{(t)}_*(W_1)$ 
cancels against $D_*^{(t-m,t)}(W_2)^{-1}$,
and subsequently $D_*^{(t-2n,t-n)}(W_1)$ against $D_*^{(t-2m,t-m)}(W_2)^{-1}$, and so on. However, since $D_*^{(t-n,t)}(W_1) = $ $D_*^n(D_*^{(t-2n,t-n)}(W_1))$ and $D_*^{(t-m,t)}(W_2) = D_*^m(D_*^{(t-2m,t-m)}(W_2))$, by Lemma \ref{D-image-cancellation} this implies $m = n$, or equivalently, $|W_1| = |W_2|$.

This implies that $D_*^{t-1}(W_1)$ cancels against $D_*^{t-1}(W_2)^{-1}$, and $D_*^{t-2}(W_1) u$ cancels against $D_*^{t-2}(W_2)^{-1}$, for $u := D_*^{t-1}(W_1) (D_*^{t-1}(W_2))^{-1}$ of $\cal G$-length $|u| = 0$. Since $D_*^{t-1}(W_1) = D(D_*^{t-2}(W_1))$ and $D_*^{t-1}(W_2) = D(D_*^{t-2}(W_2))$, 
it follows (see Remark \ref{cancellation-isomorphism}) that $D_*^{t-2}(W_1)$ cancels against $D_*^{t-2}(W_2)^{-1}$, so that
Lemma \ref{unique-cancellation} (applicable by Lemma \ref{non-bonded}) implies $u = 1$. Hence we obtain indeed $D_*^{t-1}(W_1) = D_*^{t-1}(W_2)$ and thus $W_1 = W_2$.
\end{proof}

\subsection{Limit lengths}

${}^{}$

For any basis $\cal A = \{a_1, \ldots, a_N\}$ of $\FN$ 
and any $w \in \FN$ we denote by $|w|_\cal A$ the length of the reduced word in $\cal A^{\pm 1} = \{a_1, a_1^{-1}, \ldots, a_N, a_N^{-1}\}$ representing $w$, and by $\|w\|_\cal A$ the length of 
any 
cyclically reduced word 
representing the conjugacy class $[w]$.

Below one considers, for any graph-of-groups $\cal G$ and any vertex $v$ of $\cal G$, an identification $\pi_1(\cal G, v) =\FN$, and we choose any basis $\cal A$ of $\FN$. For any edge $e$ of $\cal G$, any element $z \in G_e$ and any connected word $W \in \Pi(\cal G)$ starting at $v$ and terminating at $\tau(e)$, we will write for simplicity $\|z\|_\cal A$ instead of $ \|W f_e(z)  W^{-1}\|_\cal A$ or $\|W t_e^{-1} f_{\bar e}(z) t_e W^{-1} \|_\cal A$, which would be formally correct, but clearly 
the conjugacy class of $ W f_e(z)  W^{-1} = W t_e^{-1} f_{\bar e}(z) t_e W^{-1}$
depends only on $z$.

The following length estimate has been shown in 
\cite{Ye02}.
A slightly weaker but much related result is given by Proposition 6.19 of \cite{R}.

\begin{prop}
\label{quadratic-limit}
Let $D: \cal G \to \cal G$ be 
an efficient Dehn twist with 
twistors $z_e$ 
for any edge $e$ of $\cal G$, 
and let $v$ be a vertex of $\cal G$. For the identification $\pi_1(\cal G, v) \cong \FN$ we denote by $\cal D \in \Aut(\FN)$ the automorphism induced by $D$. Let $W = w_0 t_1 w_1 \ldots w_{r-1} t_r w_r$ be a 
$D$-reduced 
word in $\Pi(\cal G)$
based on a 
non-trivial 
closed connected path 
$\gamma = e_1 e_2 \ldots e_r$, and let $V \in \Pi(\cal G)$ 
and $g \in \FN$ 
be 
such that 
$V^{-1} W D(V)$ 
is an element in $\pi_1(\cal G, v)$ which represents 
$g$. 
Then one has, 
for any basis $\cal A$ of $\FN$:
$$\lim_{t \to \infty} \frac{|\cal D^{(t)}(g)|_\cal A}{t^2} = \frac{1}{2} \sum_{i = 1}^{r} \| 
z_{e_i} 
\|_\cal A .$$
\end{prop}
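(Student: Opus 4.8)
The plan is to reduce everything to the growth of the single iterated product $D_*^{(t)}(W)$ and then to read off the coefficient $\frac12 \sum_i \|z_{e_i}\|_\cal A$ from the explicit twistor insertions produced by $D_*$. First I would record the telescoping identity for the twisted product. Writing $\cal D = D_{*v}$ and using that $V^{-1} W D_*(V)$ represents $g$, one finds, exactly by cancelling consecutive factors,
\begin{equation*}
\cal D^{(t)}(g) = g\, \cal D(g) \cdots \cal D^{t-1}(g) = V^{-1} \, D_*^{(t)}(W) \, D_*^t(V)\, .
\end{equation*}
Since $V$ is fixed and $D$ has linear growth, $|V^{-1}|_\cal A = O(1)$ and $|D_*^t(V)|_\cal A = O(t)$, so by the triangle inequality these flanking words alter $|\cal D^{(t)}(g)|_\cal A$ by at most $O(t)$, negligible against $t^2$. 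The very same telescoping applied to $S^{-1} W D_*(S)$ gives $D_*^{(t)}(S^{-1}WD_*(S)) = S^{-1}D_*^{(t)}(W)D_*^t(S)$, so replacing $W$ by a $D$-conjugate changes $|D_*^{(t)}(W)|_\cal A$ only by $O(t)$. Hence it suffices to compute $\lim_t |D_*^{(t)}(W)|_\cal A/t^2$, and I may assume throughout that the underlying loop $\gamma$ is cyclically reduced (which is already forced by $D$-reducedness, see Remark \ref{H-reduced}(1)), so that $\gamma^t$ is a reduced path of $\cal G$-length $rt$.

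Next I would make $D_*^{(t)}(W)$ explicit. From the Dehn twist action $D_*(t_{e}) = t_{e} f_{e}(z_{e})$ of Definition \ref{Dehn-twist} one gets $D_*^k(t_{e_i}) = t_{e_i} f_{e_i}(z_{e_i}^k)$, whence
\begin{equation*}
D_*^k(W) = w_0\, t_{e_1} f_{e_1}(z_{e_1}^k)\, w_1\, t_{e_2} f_{e_2}(z_{e_2}^k)\, w_2 \cdots t_{e_r} f_{e_r}(z_{e_r}^k)\, w_r\, .
\end{equation*}
Thus $D_*^{(t)}(W) = \prod_{k=0}^{t-1} D_*^k(W)$ is, before reduction, a word on the reduced path $\gamma^t$ carrying at each vertex either a fixed factor $w_j$ (or a junction product $w_r w_0$) or one linearly growing twistor block $f_{e_i}(z_{e_i}^k) = f_{e_i}(z_{e_i})^k$. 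Writing $x_i = f_{e_i}(z_{e_i})$ and using that cyclic length is multiplicative on powers in a free group, each block satisfies $|f_{e_i}(z_{e_i}^k)|_\cal A = k\,\|z_{e_i}\|_\cal A + O(1)$, in the conjugacy-length convention fixed just before the statement. Crucially, consecutive twistor blocks sit at distinct vertices of $\gamma^t$, always separated by a stable letter, so no two growing blocks are ever directly adjacent.

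The upper bound is then immediate from subadditivity:
\begin{equation*}
|D_*^{(t)}(W)|_\cal A \;\leq\; \sum_{k=0}^{t-1} |D_*^k(W)|_\cal A \;=\; \sum_{k=0}^{t-1}\Big(k \sum_{i=1}^r \|z_{e_i}\|_\cal A + O(1)\Big) \;=\; \frac{t^2}{2}\sum_{i=1}^r \|z_{e_i}\|_\cal A + O(t)\, .
\end{equation*}
The matching lower bound is the heart of the matter, and I expect it to be the main obstacle: I must show the growing blocks survive in the reduced word up to a total loss of only $O(t)$. Since $\gamma^t$ is reduced (no $\cal G$-length cancellation at its $O(t)$ junctions) and the edge words have bounded $\cal A$-length, cancellation at each vertex against its two flanking stable letters is $O(1)$; the only real danger is a long-range resonant cancellation in which a block $f_{e_i}(z_{e_i}^k)$ cancels against a distant block $f_{e_j}(z_{e_j}^{k'})$ after the intervening fixed material collapses. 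Such a cancellation would force the corresponding subpath of $\gamma^t$ to be bonded in the sense of Definition \ref{bondedness}; because $\gamma^t$ runs repeatedly around the non-trivial loop $\gamma$, a sufficiently long bonded subpath contains a bonded loop, contradicting Lemma \ref{non-bonded}. The case $k \neq k'$, where the two blocks differ by a power of $D$ along a loop $\gamma^j$ that is a proper power, is exactly what Lemma \ref{D-image-cancellation} excludes, while Lemma \ref{unique-cancellation} rules out the residual single-vertex overlaps. Hence the total cancellation is $O(t)$ and $|D_*^{(t)}(W)|_\cal A = \frac{t^2}{2}\sum_i \|z_{e_i}\|_\cal A + O(t)$.

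Finally I would assemble the count. Over $k = 0,\ldots,t-1$ the edge $e_i$ contributes twistor blocks of total surviving length $\sum_{k=0}^{t-1}\big(k\,\|z_{e_i}\|_\cal A + O(1)\big) = \frac{t(t-1)}{2}\|z_{e_i}\|_\cal A + O(t)$, and the factor $\tfrac12$ in the statement is precisely the leading coefficient of $\sum_{k=0}^{t-1} k$. Summing over the $r$ edges of $\gamma$ and dividing by $t^2$ yields
\begin{equation*}
\lim_{t\to\infty}\frac{|\cal D^{(t)}(g)|_\cal A}{t^2} \;=\; \lim_{t\to\infty}\frac{|D_*^{(t)}(W)|_\cal A}{t^2} \;=\; \frac12 \sum_{i=1}^{r} \|z_{e_i}\|_\cal A\, ,
\end{equation*}
as claimed.
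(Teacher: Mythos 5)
Your reduction steps are sound and they do follow the natural skeleton: the telescoping identity $\cal D^{(t)}(g) = V^{-1} D_*^{(t)}(W)\, D_*^t(V)$, the $O(t)$ estimates for the flanking factors, the explicit twistor-insertion formula for $D_*^k(W)$, and the subadditive upper bound are all correct. (For calibration: the paper does not prove Proposition \ref{quadratic-limit} internally at all --- it imports it from Proposition 6.5 of \cite{Ye02}, see Remark \ref{Ye02-modifs} --- so your attempt is a self-contained substitute for that external proof.) The genuine gap sits exactly where you yourself locate ``the heart of the matter'': the lower bound. All three lemmas you invoke there --- Lemmas \ref{unique-cancellation}, \ref{D-image-cancellation} and \ref{non-bonded} --- are statements about cancellation of \emph{$\cal G$-length} in $\Pi(\cal G)$ (``$V$ cancels against $W$'' is \emph{defined} by $|VW|_{\cal G}=0$), whereas what you must control is cancellation of \emph{$\cal A$-length} in $\FN$ after the word $D_*^{(t)}(W)$ --- which, as you correctly note, is already $\cal G$-reduced, so those lemmas have nothing left to exclude --- is rewritten in the basis $\cal A$. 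The bridge between the two notions is never supplied: one needs the statement that unbounded $\cal A$-cancellation in a family $x^k c\, y^k$ ($k \to \infty$) in a free group forces an identity $x^{n_1} = c\, y^{-n_2} c^{-1}$ with $n_1, n_2 \geq 1$ (overlapping axes force common powers), and that statement, not Definition \ref{bondedness} by fiat, is the technical core.

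Moreover, once that bridge is built, the obstruction it produces is not bondedness in the sign-blind sense of Definition \ref{bondedness} but \emph{positive} bonding. Since every twistor block in $D_*^{(t)}(W)$ carries a positive exponent, resonant cancellation between the consecutive blocks $f_{e_i}(z_{e_i}^k)$ and $f_{e_{i+1}}(z_{e_{i+1}}^k)$ (push the stable letter through: $t_{e_{i+1}} f_{e_{i+1}}(z^k) = f_{\bar e_{i+1}}(z^k)\, t_{e_{i+1}}$) forces $f_{e_i}(z_{e_i}^{n_1}) = w_i f_{\bar e_{i+1}}(z_{\bar e_{i+1}}^{n_2}) w_i^{-1}$ with $n_1, n_2 \geq 1$, i.e.\ positive bonding of $e_i$ and $\bar e_{i+1}$ --- excluded precisely by conditions (3) and (4) of the definition of an efficient Dehn twist, which your proof never invokes. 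Your mechanism (bonded loops, Lemma \ref{non-bonded}) cannot handle this most dangerous case: between consecutive blocks within a single period of $\gamma$ no loop is traversed, and efficient Dehn twists are perfectly allowed to have \emph{negatively} bonded edge pairs, so sign-blind bondedness is not the right obstruction. Indeed, for a non-efficient twist with positively bonded consecutive edges the proposition is simply false (two blocks annihilate and the limit drops), so any correct proof must use (3) and (4) at this point; an argument that does not cannot be complete. Two smaller inaccuracies: Remark \ref{H-reduced}(1) does not assert that $D$-reducedness forces the underlying loop to be cyclically reduced (this is true, but needs the short twisted-conjugation computation), and $|D_*^{(t)}(W)|_{\cal A}$ is not literally defined when the base vertex of $\gamma$ differs from $v$ --- one must conjugate by a fixed connecting word, which costs only $O(t)$ but should be said.
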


\begin{rem}
\label{Ye02-modifs}
(1)
In the present version of \cite{Ye02} the equality from Proposition \ref{quadratic-limit} is 
not stated precisely as given here. However, in the proof of Proposition 6.5 of \cite{Ye02} all arguments are given, except that in the last paragraph the estimation for the upper bound has to be taken slightly more sharply.

\smallskip
\noindent
(2)
Also, the cancellation arguments from the proof of Proposition 6.5 in \cite{Ye02} apply as well to a basis $\cal A$ of any larger free group $\FN$ which contains 
$\pi_1 (\cal G, v)$ 
as free factor.
\end{rem}

\begin{rem}
\label{length-zero}
If in Proposition \ref{quadratic-limit} the path $\gamma$ is trivial, then $W = w_0$ has length $|W| = 0$ and belongs to one of the vertex groups of $\cal G$, which are element-wise fixed by the induced automorphism $D_*$. Hence $D^{(t)}_*(W) = W^t$, so that $|\cal D^{(t)}(g)|_\cal A$ grows at most linearly in $t$.
On the other hand, for such $\gamma$ the sum $\sum_{i = 1}^{r} \| 
z_{e_i} 
\|_\cal A$
is taken over the empty set and hence equal to 0, so that the conclusion of Proposition \ref{quadratic-limit} holds in this case as well.
\end{rem}

We now use the crucial Proposition \ref{quadratic-limit} and combine it with the cancellation results from the previous subsection, to obtain:

\begin{prop}
\label{limit-bounded}
Let $D: \cal G \to \cal G$ be an efficient Dehn twist with twistors $(z_e)_{e \in E(\cal G)}$, 
and let $v$ be a vertex of $\cal G$. 
For the identification $\pi_1(\cal G, v) \cong \FN$ let $\cal D \in \Aut(\FN)$ the automorphism induced by $D$. Let $\cal A$ be any basis of a free group $F_M$ which contains $\FN$ as free factor.


Let $U, V$ and $W$ be reduced words in $\pi_1(\cal G, v) \subset \Pi(\cal G)$, and
let $e_1 e_2 \ldots e_r$ and $e'_1 e'_2 \ldots e'_{r'}$ be the 
(possibly trivial)
loops on which 
$D$-reduced 
words are based that are 
$D$-conjugate 
to $V$ and $W$ respectively. 
If these two loops are non-trivial, we 
assume furthermore that 
$U^{-1}VD_*(U) \neq W$.

Then one has, for $g_1, g_2, h \in \FN$ representing $V, W$ and $U$ respectively:
$$\lim_{t \to \infty} \frac{|\cal D^{(t)}(g_1) \cal D^{t}(h) (\cal D^{(t)}(g_2)^{-1}|_\cal A}{t^2} = \frac{1}{2} (\sum_{i = 1}^{r} \| 
z_{e_i} 
\|_\cal A + \sum_{i = 1}^{r'} \| 
z_{e'_i} 
\|_\cal A)$$
\end{prop}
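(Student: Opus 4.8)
The plan is to combine Proposition \ref{quadratic-limit}, which computes the quadratic growth rate of a single $D$-conjugacy class, with the cancellation control provided by Proposition \ref{bounded-cancellation}. The target limit is a sum of two terms, one for each loop, divided by $t^2$; the natural strategy is to show that the three-part product $\cal D^{(t)}(g_1) \cal D^{t}(h) (\cal D^{(t)}(g_2))^{-1}$ has, up to a bounded amount of cancellation, $\cal G$-length equal to the sum of the $\cal G$-lengths of its outer two factors, so that its growth rate is just the sum of the two individual growth rates coming from Proposition \ref{quadratic-limit}.

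First I would reduce to the situation where $V$ and $W$ are $D$-reduced and $U$ is reduced, exactly as in the opening of the proof of Proposition \ref{bounded-cancellation}; the growth rate of $|\cal D^{(t)}(g)|_\cal A$ depends only on the $D$-conjugacy class and on the underlying loop, so replacing $V$ and $W$ by $D$-reduced representatives based on $e_1 \ldots e_r$ and $e'_1 \ldots e'_{r'}$ does not change the right-hand side. Next, the key structural input is that the cancellation in the family of products $\cal D^{(t)}(g_1) \cal D^{t}(h) (\cal D^{(t)}(g_2))^{-1}$ is \emph{bounded} independently of $t$. This is precisely where the hypothesis $U^{-1} V D_*(U) \neq W$ (for non-trivial loops) enters: by the contrapositive of Proposition \ref{bounded-cancellation}, if the cancellation in the family of products $D^{(t)}_*(W_1) \cdot D^t_*(U) \cdot (D^{(t)}_*(W_2))^{-1}$ were unbounded, then one would necessarily have $U^{-1} W_1 D_*(U) = W_2$, i.e. $V$ and $W$ would be $D$-conjugate via $U$ in the prohibited way. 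Thus our assumption forces a uniform bound $K$ on the total cancellation. (In the degenerate case where one or both loops are trivial, Remark \ref{length-zero} shows the corresponding factor contributes only linear growth and an empty sum on the right-hand side, so those factors are negligible after dividing by $t^2$; the argument goes through unchanged.)

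With bounded cancellation in hand, I would estimate the $\cal G$-length of the product. On the one hand, $|\cal D^{(t)}(g_1) \cal D^{t}(h)(\cal D^{(t)}(g_2))^{-1}|$ is at most the sum of the three individual $\cal G$-lengths; the middle factor $\cal D^t(h) = D_*^t(U)$ has $\cal G$-length bounded independently of $t$ (since $|D_*^t(U)| = |U|$, as $D_*$ preserves $\cal G$-length), so it too is negligible after division by $t^2$. On the other hand, boundedness of the cancellation gives the matching lower bound
$$|\cal D^{(t)}(g_1) \cal D^{t}(h) (\cal D^{(t)}(g_2))^{-1}| \geq |\cal D^{(t)}(g_1)| + |\cal D^{(t)}(g_2)^{-1}| - K' ,$$
for a constant $K'$ absorbing both the middle factor and the cancellation constant $K$. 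Finally I would translate these $\cal G$-length estimates into $|\cdot|_\cal A$-estimates: since each edge traversal in a reduced $\cal G$-word of growing length contributes, up to a bounded additive error, the $\|z_{e_i}\|_\cal A$-cost identified in Proposition \ref{quadratic-limit}, dividing by $t^2$ and passing to the limit yields the sum $\tfrac{1}{2}(\sum_i \|z_{e_i}\|_\cal A + \sum_i \|z_{e'_i}\|_\cal A)$, exactly as the two separate applications of Proposition \ref{quadratic-limit} to $g_1$ and to $g_2$ would give. I expect the \emph{main obstacle} to be the careful bookkeeping that converts bounded $\cal G$-length cancellation into bounded $|\cdot|_\cal A$-cancellation: a bounded number of cancelling $\cal G$-syllables could in principle carry an unbounded $\cal A$-length, so one must invoke the sharper cancellation analysis underlying Proposition \ref{quadratic-limit} (see Remark \ref{Ye02-modifs}) to ensure that the $\cal A$-length lost to cancellation near the junction points remains $o(t^2)$, and hence vanishes in the limit.
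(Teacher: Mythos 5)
Your proposal is correct and follows essentially the same route as the paper: the contrapositive of Proposition \ref{bounded-cancellation} gives bounded cancellation, the middle factor $\cal D^t(h)$ and the cancellation itself are negligible after dividing by $t^2$, and the limit then follows from Proposition \ref{quadratic-limit} together with Remark \ref{length-zero}. Your closing concern --- that boundedly many cancelled $\cal G$-syllables might in principle carry unbounded $\cal A$-length --- is a point the paper's proof passes over silently, and your resolution (each such syllable has $\cal A$-length $O(t)$, so the total loss is $o(t^2)$) is the right way to justify that step.
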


\begin{proof}
From the assumed inequality $U^{-1}VD_*(U) \neq W$ and from Proposition \ref{bounded-cancellation} we deduce that the cancellation in the products $D^{(t)}_*(V) \cdot D_*^t(U) \cdot (D^{(t)}_*(W))^{-1}$, and hence in the products 
$\cal D^{(t)}(g_1) \cdot \cal D^{t}(h) \cdot (\cal D^{(t)}(g_2)^{-1}$, 
is bounded independently of $t$. Furthermore, $\cal D^t(h)$ grows at most linearity in $t$. Hence, both, the possible cancellation in the products as well as the factor $\cal D^{t}(h) $ can be neglected when taking the limit quotient modulo $t^2$. Thus the desired equality follows directly from Proposition \ref{quadratic-limit}
and Remark \ref{length-zero}.
\end{proof}

\begin{rem}
\label{D-length}
${}^{}$
The set-up given in the first paragraph of Proposition \ref{limit-bounded} will be encountered frequently in the next section, as local graph-of-groups isomorphism. We hence want to formalize slightly some of the above:

\smallskip
\noindent
(1)
Let $g \in \FN \cong \pi_1(\cal G, v)$, and let 
$V = w_0 t_1 w_1 \ldots w_{r-1} t_r w_r$ be a connected, closed and  
$D$-reduced 
word in $\Pi(\cal G)$
based on a 
(possibly trivial) 
path 
$\gamma = e_1 e_2 \ldots e_r$, and let $U \in \Pi(\cal G)$ 
be such that 
$W := U^{-1} V D(U)$ 
is an element in $\pi_1(\cal G, v)$ which represents 
$g$. 
We observe that the limit quotient for $g$ considered in Proposition \ref{quadratic-limit} does not depend on $W$, but only on $V$, or more precisely, on the $D$-conjugacy class of $W$.

\smallskip
\noindent
(2)
We thus define for any closed connected word $W \in \Pi(\cal G)$
the 
{\em $D$-length}
of the $D$-conjugacy class $[W]_D$ by
$$\|[W]_{D}\|_\cal A := \frac{1}{2} \sum_{i = 1}^{r} \| 
z_{e_i} 
\|_\cal A \, ,$$
where $e_1 e_2 \ldots e_r$ is the path in $\Gamma(\cal G)$ underlying any $D$-reduced word 
$V$ which is $D$-conjugate to $W$.

\smallskip
\noindent
(3)
Since the $D$-conjugacy class of $W$ consists of all inverses of the $D^{-1}$-conjugacy class of $W^{-1}$ (see Definition \ref{twisted-conjugacy} and the subsequent paragraph), and since the twistors for $D^{-1}$ are given by the inverses $z_e^{-1}$ of the twistors $z_e$ of $D$, which satisfy furthermore $\|z_e^{-1}\| = \|z_e\|$, one has:
$$\|[W^{-1}]_{D^{-1}}\|_\cal A = \|[W]_D\|_\cal A$$
\end{rem}


\section{Parabolic dynamics on Outer space}


Let 
$H: \cal G \to \cal G$ be an efficient 2-level Dehn twist as in 
section \ref{sec-3}, 
and let $W = w_0 t_1 w_1 \ldots w_{q-1} t_q w_q$ be a reduced word in $\Pi(\cal G)$. Then 
from Remark \ref{long-formula} (2) we know that 
for any $t \in \Z$ one has 
$$
H_*^t(W) = w_0(t) t_1 w_1(t) \ldots w_{q-1}(t) t_q w_q(t)\, ,$$ 
with 
\begin{equation}
\label{syllable-growth}
w_k(t) = H_k^{(t)}(\delta_k^{-1}) H_k^t(w_k) H_k^{(t)}
(\bar\delta_{k+1}^{-1})^{-1} 
\end{equation}
for any $k = 0, \ldots, q$.
Here we formally define $\delta_0 = \bar \delta_{q+1} = 1$, and otherwise adopt the convention that $\delta_k$ denotes the correction term of the edge $E_k$ corresponding to the stable letter $t_k$, while $\bar \delta_k$ denotes the correction term of $\bar E_k$. Furthermore, $H_k$ denotes the vertex group isomorphism $H_{\tau(E_k)}$.
We also define 
\begin{equation}
\label{q,0}
w_{q,0}(t) := w_q(t) w_0(t) = 
H_q^{(t)}(\delta_q^{-1}) H_q^t(w_q w_0) H_q^{(t)} (\bar\delta_{1}^{-1})^{-1}
\end{equation}


\begin{prop}
\label{conjugacy-limit-growth}
Let $H: \cal G \to \cal G$ be an efficient 2-level Dehn twist which induces on $\pi_1 \cal G \cong \FN$ the automorphism $\phi \in \Out(\FN)$. For any $g \in \FN$ let $W = w_0 t_1 w_1 \ldots w_{q-1} t_q w_q$ be a cyclically reduced 
word in $\Pi(\cal G)$ which represents the conjugacy class $[g] \subset \FN$. Then, for any basis $\cal A$ of $\FN$, we have
$$\lim_{t \to \infty} \frac{\|\phi^{t}([g])\|_\cal A}{t^2} = 
\sum_{i = 1}^{q} \|[\delta_i]_{D_i}\|_\cal A
\, ,$$
where for any stable letter $t_i$ in $W$ we denote by $\delta_i$ the 
correction term $\delta(E_i)$ or $\delta(\bar E_i)$ 
for
the edge $E_i$ associated to $t_i$,
according to which of the two, $E_i$ or $\bar E_i$,
is not locally zero. 
By $D_i$ we denote 
here 
the efficient Dehn twist which represents $H_{\tau(E_i)}$ or $H_{\tau(\bar E_i)}$ respectively.
\end{prop}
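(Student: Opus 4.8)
The plan is to apply $H_*^t$ to the given cyclically reduced word $W$ and read off the quadratic growth syllable by syllable. By Remark \ref{long-formula}(2) together with formulae (\ref{syllable-growth}) and (\ref{q,0}), the word $H_*^t(W)$ again has path length $q$, with syllables $w_1(t), \ldots, w_{q-1}(t)$ and the amalgamated syllable $w_{q,0}(t)=w_q(t)w_0(t)$, each lying in the appropriate vertex group $G_{\tau(E_k)}\cong\pi_1\cal G_{\tau(E_k)}$. First I would check that $H_*^t(W)$ stays (cyclically) reduced: since all edge groups $G_E$ are trivial, a reduction could only occur at a backtracking position $t_k=t_{k+1}^{-1}$ when the intermediate syllable becomes trivial, and as $w_k(t)$ is an automorphic image sandwiched by conjugation it vanishes only if $w_k$ does. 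Because the vertex groups are free factors of $\FN=\pi_1\cal G$ and the $q$ stable letters act as separators, the conjugacy length $\|\phi^t([g])\|_\cal A=\|H_*^t(W)\|_\cal A$ differs from the sum of the individual syllable lengths $\sum_k|w_k(t)|_\cal A$ only by a bounded-cancellation error independent of $t$ (the number $q$ being fixed). After dividing by $t^2$ this error disappears, so it suffices to compute $\lim_t|w_k(t)|_\cal A/t^2$ for each syllable.

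Each syllable has exactly the shape treated in Proposition \ref{limit-bounded}: representing the vertex automorphism $H_k=H_{\tau(E_k)}$ by its efficient Dehn twist $D_k$, the syllable is $w_k(t)=H_k^{(t)}(\delta_k^{-1})\,H_k^t(w_k)\,(H_k^{(t)}(\bar\delta_{k+1}^{-1}))^{-1}$ (and analogously $w_{q,0}(t)$ with $\delta_q^{-1}$ and $\bar\delta_1^{-1}$). Thus Proposition \ref{limit-bounded} would give $\lim_t|w_k(t)|_\cal A/t^2=\|[\delta_k^{-1}]_{D_k}\|_\cal A+\|[\bar\delta_{k+1}^{-1}]_{D_k}\|_\cal A$, provided its hypothesis holds, i.e. that whenever the two underlying loops are non-trivial one has $w_k^{-1}\delta_k^{-1}H_k(w_k)\neq\bar\delta_{k+1}^{-1}$. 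When $E_k$ and $\bar E_{k+1}$ are \emph{distinct} forward oriented edges at the common vertex $\tau(E_k)=\tau(\bar E_{k+1})$, I would deduce this from efficiency condition (2) of Definition \ref{2-level-efficient}: an equality would in particular make $\delta_k^{-1}\simeq_{D_k}\bar\delta_{k+1}^{-1}$, hence $\delta(E_k)\simeq_{D_k^{-1}}\delta(\bar E_{k+1})$ by the inversion symmetry of twisted conjugacy, contradicting the required non-$D_V^{-1}$-conjugacy.

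Next I would sum the per-syllable limits and regroup them by geometric edges. Each stable letter $t_k$ contributes its terminal correction term $\|[\delta_k^{-1}]_{D_{\tau(E_k)}}\|_\cal A$ (from the syllable following $t_k$) and its initial correction term $\|[\bar\delta_k^{-1}]_{D_{\tau(\bar E_k)}}\|_\cal A$ (from the syllable preceding $t_k$), so the total limit is $\sum_{k=1}^q\bigl(\|[\delta_k^{-1}]_{D_{\tau(E_k)}}\|_\cal A+\|[\bar\delta_k^{-1}]_{D_{\tau(\bar E_k)}}\|_\cal A\bigr)$. By efficiency condition (1) exactly one of $E_k,\bar E_k$ is locally zero, and a locally zero correction term has vanishing $D$-length by Definition \ref{locally-zero} and Remark \ref{D-length}; so one summand drops out, leaving only the forward oriented correction term $\delta_k$ with its twist $D_k$. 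Finally, invoking the inversion symmetry of the $D$-length (Remark \ref{D-length}(3), together with $\|z_e^{-1}\|_\cal A=\|z_e\|_\cal A$) I would replace $\|[\delta_k^{-1}]_{D_k}\|_\cal A$ by $\|[\delta_k]_{D_k}\|_\cal A$, obtaining exactly the claimed $\sum_{i=1}^q\|[\delta_i]_{D_i}\|_\cal A$.

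The delicate point — and the step I expect to be the real obstacle — is the case of an internal backtrack, $E_{k+1}=\bar E_k$ with $w_k\neq 1$, which a cyclically reduced word may well contain since trivial edge groups permit it (the wrap-around backtrack covered by Definition \ref{reduced-word}(4) is handled uniformly). Here the two flanking edges coincide, so efficiency condition (2) does not apply and the hypothesis of Proposition \ref{limit-bounded} is no longer automatic: one has $\bar\delta_{k+1}=\delta_k$, and if the forward oriented $\delta_k$ satisfied $H_k(w_k)=\delta_k w_k\delta_k^{-1}$ then the telescoping underlying Proposition \ref{bounded-cancellation} would collapse $w_k(t)$ to the constant $w_k$, losing the expected contribution $2\|[\delta_k]_{D_k}\|_\cal A$. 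The plan is to exclude this over-cancellation using efficiency of $D_k$: from $H_k(w_k)=\delta_k w_k\delta_k^{-1}$ one gets $H_k^t(w_k)=C_t\,w_k\,C_t^{-1}$ with $C_t=(H_k^{(t)}(\delta_k^{-1}))^{-1}$ of quadratically growing $\cal A$-length (as $\delta_k$ is not locally zero, by Proposition \ref{quadratic-limit}), whereas the Dehn-twist iterate $H_k^t(w_k)$ grows at most linearly. Since centralizers in the free group $\pi_1\cal G_{\tau(E_k)}$ are cyclic, this length mismatch forces $C_t$ to be, up to a linearly growing factor, a power of the root of $w_k$; in the case where $w_k$ lies in a vertex group of $\cal G_{\tau(E_k)}$ this immediately pushes $\delta_k$ into a cyclic subgroup carrying a trivial loop, contradicting that $\delta_k$ is forward oriented, and the general case is closed by the finer cancellation analysis of Section~4 (or, alternatively, by first passing to a cyclically $H$-reduced representative of $[g]$). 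Once over-cancellation is excluded, the backtrack syllable yields $\|[\delta_k^{-1}]_{D_k}\|_\cal A+\|[\bar\delta_{k+1}^{-1}]_{D_k}\|_\cal A=2\|[\delta_k]_{D_k}\|_\cal A$, in agreement with the regrouped sum, and the proof goes through.
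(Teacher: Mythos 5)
Your main line is the same as the paper's proof: decompose $H_*^t(W)$ into the syllables $w_k(t)$ and the cyclic syllable $w_{q,0}(t)$, observe that cancellation between (conjugates of) the vertex groups is a priori bounded because they are free factors, apply Proposition \ref{limit-bounded} to each syllable, and regroup using efficiency condition (1) together with the inversion symmetry of Remark \ref{D-length}(3). Also the verification of the hypothesis of Proposition \ref{limit-bounded} for \emph{distinct} forward oriented edges $E_k \neq \bar E_{k+1}$ via efficiency condition (2) is exactly the paper's argument. Where you genuinely depart from the paper is the internal backtrack $E_{k+1}=\bar E_k$ with $E_k$ forward oriented: you correctly identify this as the real obstacle and correctly describe the failure mode (if $H_k(w_k)=\delta_k w_k\delta_k^{-1}$ then $w_k(t)\equiv w_k$ and the quadratic contribution is lost), whereas the paper disposes of this case with the single sentence ``for $E_k=\bar E_{k+1}$ we use the hypothesis that $W$ is reduced.''

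However, your argument for excluding the relation $H_k(w_k)=\delta_kw_k\delta_k^{-1}$ has a genuine gap. From cyclic centralizers you conclude that $\delta_k$ is forced into a cyclic subgroup generated by (a conjugate of) an elliptic element, and you treat this as ``contradicting that $\delta_k$ is forward oriented.'' That inference conflates ordinary conjugacy with $D_k^{-1}$-conjugacy: an ordinary conjugate of a vertex-group element need not be $D_k^{-1}$-conjugate to an element of length $0$. For example, for the standard efficient Dehn twist $D$ on the graph-of-groups with vertex groups $\langle a,b\rangle$, $\langle c,d\rangle$, edge group $\langle z\rangle$, $f_{\bar e}(z)=a$, $f_e(z)=c$ and twistor $z$, the element $\delta = s b s^{-1}$ with $s:=t_e d t_e^{-1}$ is an ordinary conjugate of the vertex-group element $b$, yet $(D^{-1})^{(t)}(\delta)=\prod_{j=0}^{t-1}a^{-j}sa^jba^{-j}s^{-1}a^j$ grows quadratically, so $\delta$ is \emph{not} $D^{-1}$-zero; membership of $\delta_k$ in such a cyclic subgroup therefore contradicts nothing. (Your deduction is sound only when $w_k$ lies in the base vertex group itself, so that its centralizer lies in that vertex group and hence consists of length-$0$ elements.) Your two fallbacks do not close the general case either: ``the finer cancellation analysis of Section 4'' is a pointer, not an argument, and ``passing to a cyclically $H$-reduced representative'' is unavailable, since with trivial edge groups the cyclically reduced word representing $[g]$ is unique up to cyclic permutation, so the backtrack $t_kw_kt_k^{-1}$ persists in every representative (and the right-hand side of the claimed formula is defined in terms of that representative). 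What actually closes the case — and is presumably what the paper's one-line remark stands for — is to use efficiency of the local twist via Lemma \ref{non-bonded}: the relation $D_k(w_k)=\delta_kw_k\delta_k^{-1}$ says that $w_k$ commutes with $\delta_k^{-1}\tau$ in the mapping torus $G_{\tau(E_k)}\rtimes_{D_k}\Z$ acting on the Bass--Serre tree of $\cal G_{\tau(E_k)}$; non-local-zeroness of $\delta_k$ makes $\delta_k^{-1}\tau$ hyperbolic, and then (treating $w_k$ elliptic and $w_k$ hyperbolic separately) one produces a non-trivial element stabilizing every edge along its axis, whose image in $\Gamma(\cal G_{\tau(E_k)})$ is a non-trivial bonded loop — contradicting Lemma \ref{non-bonded}. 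Until an argument of this kind is supplied, the backtrack case, and with it the proposition for words containing $t_kw_kt_k^{-1}$, remains unproved in your write-up.
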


\begin{proof}
We consider the cyclically reduced word $W$ which represents the conjugacy class $[g]$ and its images $H^t(W)$ representing $\phi^t([g])$, for increasing exponents $t \geq 0$ (or, similarly, for decreasing exponents $t \leq 0$). Since the vertex groups of $\cal G$ are free factors of $\pi_1 \cal G$ and of $\Pi(\cal G)$, any cancellation between adjacent elements of distinct vertex groups, or between distinct conjugates of the same vertex group, are a priori bounded, for any chosen basis $\cal A$ of $\FN$. Hence we can consider separately the growth of each ``syllable'' $t_k w_k t_{k+1}$ in $W$ under iteration of $H$, which has been described above in (\ref{syllable-growth}) by the elements $w_k(t)$ (while the stable letters stay constant and can hence be ignored when passing to the limit quotient by $t^2$).
Since $W$ is furthermore assumed to be cyclically reduced, we can also similarly consider the ``cyclic syllable�� $t_q w_q w_0 t_0$ and its $H$-itarations given through $w_{q,0}(t)$ in (\ref{q,0}), to obtain:
$$\lim_{t \to \infty} \frac{\|\phi^{t}([g])\|_\cal A}{t^2} = 
{\big (}\sum_{k = 1}^{q-1} 
\lim_{t \to \infty} \frac{|w_k(t)|_\cal A}{t^2} {\big )}
+ \lim_{t \to \infty} \frac{|w_{q,0}(t)|_\cal A}{t^2}
$$
For each of the $w_k(t)$ we can apply 
Proposition \ref{limit-bounded}, which gives 
as limit quotient 
$\|[\delta(E_k)^{-1}]_{D^{-1}_{\tau(E_k)}}\|_\cal A + \|[\delta(\bar E_{k+1})^{-1}]_{D^{-1}_{\tau(E_k)}}\|_\cal A$, which by Remark \ref{D-length} (3) is equal to
$\|[\delta(E_k)]_{D_{\tau(E_k)}}\|_\cal A + \|[\delta(\bar E_{k+1})]_{D_{\tau(E_k)}}\|_\cal A$.
Similarly, for $w_{q,0}(t)$ we obtain the limit quotient
$\|[\delta(E_q)]_{D_{\tau(E_q)}}\|_\cal A + \|[\delta(\bar E_{1})]_{D_{\tau(E_q)}}\|_\cal A$.
We note here that the hypothesis $U^{-1} V D_*(U) \neq W$ from Proposition \ref{limit-bounded} is satisfied since for $E_k \neq \bar E_{k+1}$ the correction terms $\delta(E_k)$ and $\delta(\bar E_{k+1})$ are not $D_{\tau(E_k)}^{-1}$-conjugate, by the definition of an efficient 2-level Dehn twist (see Definition \ref{2-level-efficient}). For $E_k = \bar E_{k+1}$ we use the hypothesis that $W$ is reduced. The analogous arguments apply to $E_q$ and $\bar E_1$, since $W$ is furthermore cyclically reduced.
We thus have:
\begin{align*}
\lim_{t \to \infty} \frac{\|\phi^{t}([g])\|_\cal A}{t^2} 
& ={\big (}\sum_{k = 1}^{q-1} 
\|[\delta(E_k)]_{D_{\tau(E_k)}}\|_\cal A + \|[\delta(\bar E_{k+1})]_{D_{\tau(E_k)}}\|_\cal A {\big )}\\
&\qquad \qquad \qquad  + \|[\delta(E_q)]_{D_{\tau(E_q)}}\|_\cal A + \|[\delta(\bar E_{1})]_{D_{\tau(E_q)}}\|_\cal A\\
&= \sum_{i = 1}^{q} 
\|[\delta(E_i)]_{D_{\tau(E_i)}}\|_\cal A + \|[\delta(\bar E_{i})]_{D_{\tau(\bar E_i)}}\|_\cal A
\end{align*}

Since from the definition of an efficient 2-level Dehn twist we know that for each edge $E$ of $\cal G$ precisely one of 
$E$ or $\bar E$
is locally zero
(and hence 
$\delta(E)$ or $\delta(\bar E)$
has zero $D_V$-length, for $V = \tau(E)$ or $V = \tau(\bar E)$ respectively), 
the last sum 
gives 
precisely 
the desired result.
\end{proof}

\begin{rem}
\label{other-lengths}
For any length function $\| \cdot \|$ on the conjugacy classes of $\FN$ which is induced by a length function that is quasi-isometric to the one given by any basis $\cal A$ of $\FN$, the equation
$$\lim_{t \to \infty} \frac{\|\phi^{t}([g])\|}{t^2} = 
\sum_{i = 1}^{q} \|[\delta_i]_{D_i}\|
$$
analogous to the result in Proposition \ref{conjugacy-limit-growth} stays valid, 
since the linear quasi-isometry constants disappear in the limit when considering the quotient by $t^2$,
and the previously used cancellation arguments apply to $\| \cdot \|$ as well.

\end{rem}

Remark \ref{other-lengths} applies in particular to translation length functions $\| \cdot \|_{\tilde \Gamma}$ on a metric tree $\tilde \Gamma$ given as universal covering of a metric graph $\Gamma$ equipped with a marking isomorphism $\theta: \pi_1 \Gamma \to \FN$. Such length functions define, after passing to the projective class $[\Gamma]$, a point in 
Outer space $\CVN$. 
For background on Outer space $\CVN$ 
and its ``Thurston compactification'' $\CVNbar = \CVN \cup \partial \CVN$
we refer the reader to \cite{Vog}.

On the other hand, any graph-of-groups $\cal G$ with marking isomorphism $\pi_1 \cal G \cong \FN$ defines a simplex $\Delta_\cal G$ in the boundary $\partial \CVN$ of $\CVN$ (or in $\CVN$, if all vertex groups of $\cal G$ are 
trivial), where a point in $\Delta_\cal G$ is 
given by defining 
an edge length $L(E) \geq 0$ 
for any edge $E$ 
of $\cal G$,
with $L(E) = L(\bar E)$.
Thus, for $E^+(\cal G)$ as 
given 
in Definition \ref{2-level-efficient} (2), 
the Bass-Serre tree $T_{(\cal G, (L(E))_{E \in E^+(\cal G)})}$ 
associated to the point $[\cal G, (L(e))_{E \in E^+(\cal G)}] \in \Delta_\cal G$ becomes a metric simplicial tree, equipped canonically with an action of $\FN$ by isometries. 
%
We obtain:

\begin{thm}
\label{parabolic-dynamics}
Let 
$[\Gamma]$
be any point in Outer space $\CVN$, given by a 
marked metric graph $\Gamma$.
Then for any automorphism $\phi \in \Out(\FN)$, 
represented by an efficient 2-level Dehn twist $D: \cal G \to \cal G$, 
the $\phi$-orbit of 
$[\Gamma]$ 
is parabolic, with 
limit point contained 
in the 
interior of the 
simplex $\Delta_\cal G \subset \partial\CVN$.
More precisely, one has:
$$\lim_{t \to \pm \infty} \phi^t(
[\Gamma]
) = [\cal G, (\|[\delta(E)]_{D_{\tau(E)}}\|_{
\tilde \Gamma})_{E \in E^+(\cal G)}]$$
\end{thm}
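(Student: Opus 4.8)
The plan is to work in the length-function model of the Culler--Morgan/Thurston compactification $\CVNbar$, where a point is recorded by its translation length function $[g] \mapsto \|g\|$ on conjugacy classes of $\FN$, taken up to projective scaling, and where $[\ell_t] \to [\ell]$ means that after rescaling by suitable positive constants $\lambda_t$ one has $\lambda_t\,\ell_t([g]) \to \ell([g])$ for every conjugacy class $[g]$ (see \cite{Vog}). Accordingly, I would first identify the candidate limit point concretely: it is the point of $\Delta_\cal G$ whose edge-length datum assigns to each $E \in E^+(\cal G)$ the value $L(E) := \|[\delta(E)]_{D_{\tau(E)}}\|_{\tilde\Gamma}$, with $L(\bar E) = L(E)$. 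By Bass--Serre theory (see \cite{Bass93}) the translation length of a conjugacy class $[g]$ in the associated Bass--Serre tree $T_{(\cal G, L)}$ is $0$ when $[g]$ is elliptic, i.e. conjugate into a vertex group so that its cyclically reduced word has $\cal G$-length $0$, and otherwise equals $\sum_{i=1}^{q} L(E_i)$, where $E_1, \ldots, E_q$ are the edges crossed by the cyclically reduced word $W = w_0 t_1 w_1 \ldots w_{q-1} t_q w_q$ representing $[g]$. Since $L(E) = L(\bar E)$, this is precisely $\sum_{i=1}^{q}\|[\delta_i]_{D_i}\|_{\tilde\Gamma}$, with $\delta_i, D_i$ as in Proposition \ref{conjugacy-limit-growth}.

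The heart of the argument is then a direct comparison. For $t \to +\infty$ the point $\phi^t([\Gamma])$ has translation length function $[g] \mapsto \|\phi^{t}([g])\|_{\tilde\Gamma}$ (with the opposite action convention one replaces $\phi$ by $\phi^{-1}$; by the parabolicity established below this is immaterial). Taking $\lambda_t = 1/t^2$, Proposition \ref{conjugacy-limit-growth}, in the form valid for the translation length function $\|\cdot\|_{\tilde\Gamma}$ by Remark \ref{other-lengths} (together with Remark \ref{length-zero} for the elliptic classes), yields for every conjugacy class $[g]$
$$\lim_{t \to \infty} \frac{\|\phi^{t}([g])\|_{\tilde\Gamma}}{t^2} = \sum_{i=1}^{q}\|[\delta_i]_{D_i}\|_{\tilde\Gamma},$$
which by the previous paragraph is exactly the translation length of $[g]$ in $T_{(\cal G, L)}$. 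This is the required pointwise convergence of the rescaled length functions; since the limit function is the translation length function of a genuine tree of $\Delta_\cal G \subset \CVNbar$, it follows that $\phi^t([\Gamma]) \to [\cal G, (L(E))_{E \in E^+(\cal G)}]$ in $\CVNbar$, and the limit function is not identically zero (see the next paragraph), so it defines a genuine point of $\partial\CVN$.

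To see that the limit lies in the \emph{interior} of $\Delta_\cal G$ I would check that every coordinate $L(E)$ is strictly positive. By efficiency (Definition \ref{2-level-efficient}) each forward-oriented edge $E \in E^+(\cal G)$ is not locally zero, so $\delta(E)$ is $D_{\tau(E)}$-conjugate to a $D_{\tau(E)}$-reduced word based on a non-trivial loop; hence, by the formula of Remark \ref{D-length} (2), $\|[\delta(E)]_{D_{\tau(E)}}\|_{\tilde\Gamma} = \frac{1}{2}\sum_i \|z_{e_i}\|_{\tilde\Gamma}$ is a sum of translation lengths of non-trivial conjugacy classes of $\FN$ (the local twistors being non-trivial because an efficient Dehn twist has no unused edge), each positive since $\tilde\Gamma$ is a point of $\CVN$. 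Thus $L(E) > 0$ for all $E \in E^+(\cal G)$, so every unoriented edge of $\cal G$ receives positive length, which is exactly the condition defining the interior of $\Delta_\cal G$.

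Finally, parabolicity requires that the backward orbit converge to the same point. For $t \to -\infty$ I would run the identical computation for $\phi^{-1}$, which is again represented by an efficient 2-level Dehn twist on the same graph-of-groups $\cal G$, now carrying the reversed forward orientation. The candidate backward limit then assigns to each unoriented edge $E$ the $D^{-1}$-length of the $H^{-1}$-correction term of its (now forward) orientation; the key point is that, by the symmetry recorded in Remark \ref{D-length} (3) --- namely $\|[W^{-1}]_{D^{-1}}\|_{\tilde\Gamma} = \|[W]_D\|_{\tilde\Gamma}$, which holds because the twistors of $D^{-1}$ are the inverses of those of $D$ and $\|z^{-1}\|_{\tilde\Gamma} = \|z\|_{\tilde\Gamma}$ --- this coordinate again equals $L(E)$. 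Hence both the forward and the backward limits are the single point $[\cal G, (L(E))_{E \in E^+(\cal G)}]$, and the orbit is parabolic. I expect this last matching of the two limits --- pinning down precisely how the forward orientation, the correction terms, and the local twistors transform under $H \mapsto H^{-1}$ so that the two edge-length vectors coincide on the nose (and verifying that $H^{-1}$ is indeed efficient on $\cal G$ with reversed orientation) --- to be the main obstacle, the remainder being an assembly of Proposition \ref{conjugacy-limit-growth} with standard facts about translation lengths in Bass--Serre trees and the length-function topology on $\CVNbar$.
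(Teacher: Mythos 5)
Your proposal is correct and follows essentially the same route as the paper: the paper's proof consists precisely of invoking Proposition \ref{conjugacy-limit-growth} (through Remark \ref{other-lengths} applied to $\|\cdot\|_{\tilde\Gamma}$, with Remark \ref{length-zero} covering elliptic classes) to identify the rescaled limit of the length functions with the translation length function of the Bass--Serre tree with edge lengths $L(E)=\|[\delta(E)]_{D_{\tau(E)}}\|_{\tilde\Gamma}$, together with exactly your interiority observation that non-trivial conjugacy classes have positive $\tilde\Gamma$-length. What you make explicit --- and the paper leaves implicit, since Proposition \ref{conjugacy-limit-growth} is stated only for $t\to+\infty$ and its proof merely notes that decreasing exponents are handled ``similarly'' --- is the matching of the backward limit with the forward one, and you correctly identify Remark \ref{D-length} (3) as the key symmetry. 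One detail in your last paragraph is wrong, though harmlessly so: the forward orientation of $H^{-1}$ is not reversed but coincides with that of $H$. Indeed, from $H_*(t_E)=\delta(\bar E)\, t_E\, \delta(E)^{-1}$ one computes $\delta_{H^{-1}}(E)=H_{\tau(E)}^{-1}\bigl(\delta_H(E)^{-1}\bigr)$; since $X\simeq_\Phi \Phi(X)$ for any automorphism $\Phi$, and $g\simeq_\Phi h$ if and only if $g^{-1}\simeq_{\Phi^{-1}}h^{-1}$ (see the paragraph following Definition \ref{twisted-conjugacy}), the correction term $\delta_{H^{-1}}(E)$ is both $D_{\tau(E)}$- and $D_{\tau(E)}^{-1}$-conjugate to $\delta_H(E)^{-1}$, so $E$ is locally zero for $H^{-1}$ exactly when it is locally zero for $H$. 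Consequently, for $E\in E^+(\cal G)$ the backward edge length is $\|[\delta_H(E)^{-1}]_{D_{\tau(E)}^{-1}}\|_{\tilde\Gamma}=\|[\delta_H(E)]_{D_{\tau(E)}}\|_{\tilde\Gamma}=L(E)$ by Remark \ref{D-length} (3), which is precisely the matching you wanted, obtained even more directly than you anticipated.
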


\begin{proof}
This is a direct consequence of Proposition \ref{conjugacy-limit-growth}, where for the ``interior point'' 
claim 
we observe that for any point $[\Gamma]$ in $\CVN$ any non-trivial conjugacy class has non-zero $\tilde \Gamma$-length,
so that for any edge $E$ which is not locally zero one has $\|[\delta(E)]_{D_{\tau(E)}}\|_{\tilde \Gamma} > 0$.
\end{proof}

\begin{rem}
\label{interior-points}
The ``interior points'' statement in Theorem \ref{parabolic-dynamics} is useful, since 
if two 
marked graph-of-groups $\cal G$ and $\cal G'$ give rise to distinct
simplexes $\Delta_\cal G \neq \Delta_{\cal G'}$ in $\CVNbar$,
then $\Delta_\cal G$ and 
$\Delta_{\cal G'}$ 
can not
intersect in points that are interior in both $\Delta_\cal G$ and 
$\Delta_{\cal G'}$.

This follows from the fact that for any interior point $[\cal G, (L(e))_{E \in E^+(\cal G)}] \in \Delta_\cal G$ all edges $e$ have length $L(e) > 0$, so that the associated tree $T_{(\cal G, (L(E))_{E \in E^+(\cal G)})}$, after forgetting the metric, is $\FN$-equivariantly homeomorphic to the Bass-Serre tree $T_\cal G$ associated to $\cal G$.

\end{rem}


\section{Normal from for 
quadratically growing automorphisms}

From the geometric result in the previous section we can now derive that efficient 2-level  
Dehn twists constitute indeed a normal from for the induced outer automorphisms:

\begin{thm}
\label{normal-form}
Two efficient 2-level Dehn twists $H:\cal G \to \cal G$ and $H':\cal G' \to \cal G'$ represent outer automorphisms $\hat H$ and $\hat H'$ of a free group $\FN$ which are conjugate in $\Out(\FN)$ if and only if there exists a graph-of-groups isomorphism $F: \cal G \to \cal G'$ which satisfies:
$$\hat H = \hat F^{-1} \hat H' \hat F$$
\end{thm}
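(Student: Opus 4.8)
The plan is to deduce the asserted equivalence from the parabolic dynamics established in Theorem \ref{parabolic-dynamics}, together with the rigidity of the limit simplices recorded in Remark \ref{interior-points}. The implication from right to left is immediate: a graph-of-groups isomorphism $F:\cal G\to\cal G'$ induces, via the two marking isomorphisms $\pi_1\cal G\cong\FN\cong\pi_1\cal G'$, an element $\hat F\in\Out(\FN)$, and the relation $\hat H=\hat F^{-1}\hat H'\hat F$ exhibits $\hat H$ and $\hat H'$ as conjugate in $\Out(\FN)$.

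For the converse I would start from a conjugator $\psi\in\Out(\FN)$ with $\hat H'=\psi\hat H\psi^{-1}$ and use it to match the two simplices. Since $\psi$ acts as a homeomorphism of $\CVNbar$ and intertwines the $\hat H$- and $\hat H'$-actions, it carries each $\hat H$-orbit to an $\hat H'$-orbit, hence each $\hat H$-limit point to the corresponding $\hat H'$-limit point. Picking any $[\Gamma]\in\CVN$, Theorem \ref{parabolic-dynamics} places $P:=\lim_{t\to\infty}\hat H^{t}([\Gamma])$ in the interior of $\Delta_\cal G$, while the same theorem applied to $\hat H'$ places $\psi(P)=\lim_{t\to\infty}(\hat H')^{t}(\psi[\Gamma])$ in the interior of $\Delta_{\cal G'}$. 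On the other hand $\psi(P)$ lies in $\psi(\Delta_\cal G)$, which is itself a simplex of the same kind, namely $\Delta_{\cal G''}$ for $\cal G$ equipped with the marking precomposed by $\psi$. Thus $\Delta_{\cal G''}$ and $\Delta_{\cal G'}$ share an interior point, and Remark \ref{interior-points} forces $\Delta_{\cal G''}=\Delta_{\cal G'}$.

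From the equality of simplices I would extract the desired isomorphism. By the observation in Remark \ref{interior-points}, an interior point determines the underlying Bass-Serre tree after forgetting the metric, so $\Delta_{\cal G''}=\Delta_{\cal G'}$ yields an $\FN$-equivariant isomorphism of the simplicial trees $T_{\cal G''}$ and $T_{\cal G'}$; by Bass-Serre theory this is precisely a graph-of-groups isomorphism $F:\cal G\to\cal G'$. Tracking the two markings through this equivariant tree isomorphism shows that the induced outer automorphism $\hat F$ equals $\psi$ in $\Out(\FN)$ (the equivariant tree isomorphism is unique up to automorphisms inducing the identity on $\pi_1$, so $\hat F$ is well defined). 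Substituting $\hat F=\psi$ into $\hat H'=\psi\hat H\psi^{-1}$ gives $\hat H=\hat F^{-1}\hat H'\hat F$, as required.

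The main obstacle is this second-and-third step, i.e. turning the purely dynamical matching of limit points into an honest graph-of-groups isomorphism carrying $\hat H$ to $\hat H'$. All the genuinely hard analytic work — the quadratic growth estimate of Proposition \ref{quadratic-limit}, the cancellation bounds, and their assembly into the parabolic convergence of Theorem \ref{parabolic-dynamics} — is already in hand, so here the delicate points are matters of bookkeeping: verifying that the simplices are \emph{literally} equal (not merely projectively close), that a single interior limit point already suffices to invoke Remark \ref{interior-points}, and that the equivariant tree isomorphism produces $\hat F$ \emph{equal} to the given conjugator $\psi$, rather than merely agreeing with it up to a symmetry of $\Delta_{\cal G'}$. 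I would stress that no separate comparison of the local Dehn twists or of the correction terms is then needed, since the identity $\hat F=\psi$ transports the entire automorphism $\hat H$ onto $\hat H'$ at once.
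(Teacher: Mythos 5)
Your proposal is correct and follows essentially the same route as the paper's own proof: conjugators carry $\hat H$-orbits and their parabolic limit points to $\hat H'$-orbits and limit points, the interior-point rigidity of Remark \ref{interior-points} forces the (marking-twisted) simplex $\psi(\Delta_{\cal G})$ to coincide with $\Delta_{\cal G'}$, and the resulting $\FN$-equivariant identification of non-metric Bass-Serre trees descends, by the standard fact cited in the paper (Lemma 4.5 of \cite{CL99}, Proposition 4.4 of \cite{Bass93}), to a graph-of-groups isomorphism $F$ with $\hat F = \psi$. The only cosmetic difference is that the paper pins down the tree isomorphism by observing that $\psi$ takes the center point of $\Delta_{\cal G}$ (all edge lengths $1$) to the center point of $\Delta_{\cal G'}$, whereas you pass directly from equality of simplices to the equivariant tree isomorphism; both amount to the same appeal to Bass--Serre rigidity.
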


\begin{proof}
The ``if'' direction is obvious. To show the ``only if'' direction we note that any conjugating automorphism $\psi \in \Out(\FN)$, with $\hat H = \psi^{-1} \hat H' \psi$, must map any $\hat H$-orbit of the 
$\Out(\FN)$-action on $\CVN$ to an $\hat H'$-orbit, 
and hence the limit point of the former to the limit point of the latter.
It follows from Remark \ref{interior-points} that $\psi$ maps
the limit simplex $\Delta_{\cal G}$ from Theorem \ref{parabolic-dynamics} to the analogous simplex $\Delta_{\cal G'}$. 

In particular, 
the automorphism $\psi$ maps the center point of $\Delta_{\cal G}$, defined by setting all edge lengths equal to 1, to the analogously defined center point of $\Delta_{\cal G'}$. Hence $\psi$ conjugates the $\FN$-action on the non-metric Bass-Serre tree $T_{\cal G}$ to that on the analogous tree $T_{\cal G'}$, thus inducing a graph-of-groups isomorphism $F: \cal G \to \cal G'$ which satisfies $\hat F = \psi$. This last conclusion is a standard fact for graph-of-groups, see for instance Lemma 4.5 of \cite{CL99}
or Proposition 4.4 of \cite{Bass93}.
%
\end{proof}

\begin{rem}
\label{left-right}
The natural group action of $\Out(\FN)$ on Outer space $\CVN$ is a right action, but of course it can be canonically transformed into a left action by setting $\phi \cdot [\Gamma] := [\Gamma] \cdot \phi^{-1}$. The watchful reader will notice that in either case, in the above setting, the conjugating automorphism $\psi$ maps $\hat D$-orbits to a $\hat D'$-orbits.
\end{rem}


In \cite{Ye03} {\em iterated Dehn twists of level $k \geq 1$} have been introduced as certain iteratively defined graph-of-groups automorphisms $H: \cal G \to \cal G$. For $k = 1$ one obtains ordinary Dehn twists, and for $k=2$ this notion agrees with what 
is called here
``2-level Dehn twists''.
It has been shown in Proposition 1.1 of \cite{Ye03} that every polynomially growing automorphism $\phi \in \Out(\FN)$ has a positive power which is represented by an iterated Dehn twist of some level $k \geq 1$.

Furthermore, 
it can be shown 
that any iterated Dehn twist $H: \cal G \to \cal G$ of level $k \geq 2$ represents an automorphism $\phi$ which either can be represented by an  iterated Dehn twist of level $k - 1$, or else there is a conjugacy class in $\FN$ which has polynomial growth of degree precisely equal to $k$, under iteration of $\phi$.  This has been shown in \cite{Ye02} for $k=2$; a proof for $k \geq 3$ is obtained by analogous arguments, except that it becomes easier since the graph-of-groups in question for $k \geq 3$ have trivial edge groups
(compare also \cite{R}, Proposition 4.21).
Hence every quadratically growing automorphism $\phi \in \Out(\FN)$ 
has a positive power which is represented by an efficient 2-level Dehn twist $H: \cal G \to \cal G$.

The following extension of Theorem \ref{normal-form} gives hence a normal form for any quadratically growing automorphism of $\FN$.

\begin{cor}
\label{roots-normal-form}
(1)
Every automorphism $\phi \in \Out(\FN)$ with exponent 
$m \geq 1$ 
such that $\phi^m$ is represented by an efficient 2-level Dehn twist $H: \cal G \to \cal G$ can be represented by a graph-of-groups automorphism $R: \cal G \to \cal G$.

\smallskip
\noindent
(2)
Two graph-of-groups automorphism $R: \cal G \to \cal G$ and $R': \cal G' \to \cal R'$ as in part (1) represent outer automorphisms $\hat R$ and $\hat R'$ of a free group $\FN$ which are conjugate in $\Out(\FN)$ if and only if there exists a graph-of-groups isomorphism $F: \cal G \to \cal G'$ which satisfies:
$$\hat R = \hat F^{-1} \hat R' \hat F$$
\end{cor}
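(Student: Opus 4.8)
The plan is to adapt the method of the proof of Theorem \ref{normal-form}, the new point being that a root of an efficient 2-level Dehn twist commutes with, and hence shares the limit simplex of, that Dehn twist. Throughout I use that by Theorem \ref{parabolic-dynamics} the $\hat H$-orbit of any $[\Gamma] \in \CVN$ converges to a point $L([\Gamma])$ in the interior of $\Delta_\cal G$, and that by Remark \ref{interior-points} two distinct such simplices cannot share an interior point; consequently the limit simplex attached to an outer automorphism is an intrinsic invariant. A second elementary observation I will use repeatedly is that a power $\hat H^k$ $(k \geq 1)$, being a subsequence of a convergent orbit, has the same limit points, hence the same limit simplex $\Delta_\cal G$, as $\hat H$ itself. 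Finally, since the $\Out(\FN)$-action on length functions is by precomposition and thus preserves translation lengths of individual elements, any $\psi \in \Out(\FN)$ carrying $\Delta_\cal G$ onto a simplex $\Delta_{\cal G'}$ does so by a coordinate permutation of edge lengths \emph{without rescaling}, and therefore sends the barycenter (all edge lengths equal to $1$, i.e.\ the non-metric Bass-Serre tree $T_\cal G$) to the barycenter $T_{\cal G'}$.

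For part (1), write $\phi = \hat R$ with $\phi^m = \hat H$. Since $\phi$ commutes with $\phi^m = \hat H$, it maps $\hat H$-orbits to $\hat H$-orbits and therefore, by continuity on $\CVNbar$, satisfies $\phi(L([\Gamma])) = L(\phi([\Gamma]))$; hence $\phi$ preserves the interior of $\Delta_\cal G$ and, being a homeomorphism, the closed simplex $\Delta_\cal G$ itself. By the preceding paragraph $\phi$ fixes the barycenter $[T_\cal G]$, so it conjugates the $\FN$-action on the non-metric tree $T_\cal G$ to itself. By the standard fact invoked in Theorem \ref{normal-form} (Lemma 4.5 of \cite{CL99}, or Proposition 4.4 of \cite{Bass93}) this conjugacy is induced by a graph-of-groups automorphism $R: \cal G \to \cal G$ with $\hat R = \phi$, which is exactly the assertion.

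For part (2) the ``if'' direction is immediate, so I assume $\hat R = \psi^{-1}\hat R'\psi$ for some $\psi \in \Out(\FN)$, with $\hat R^m = \hat H$ and $\hat R'^{m'} = \hat H'$ for efficient 2-level Dehn twists $H, H'$. The first task is to identify the two limit simplices. Raising the conjugacy to the $m$-th power gives $\hat R'^m = \psi\, \hat H\, \psi^{-1}$, so the common power $\chi := \hat R'^{mm'}$ admits the two factorizations $\chi = (\hat H')^m = \psi\, \hat H^{m'}\, \psi^{-1}$. Applying $\chi$ repeatedly to a fixed $[\Gamma]$ and using the power-invariance of limit simplices, the single sequence $\chi^k([\Gamma])$ converges both to an interior point of $\Delta_{\cal G'}$ (from the first factorization) and to an interior point of $\psi(\Delta_\cal G)$ (from the second, since $\psi$ is continuous on $\CVNbar$); by Remark \ref{interior-points} this forces $\psi(\Delta_\cal G) = \Delta_{\cal G'}$. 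Now $\psi$ carries $\Delta_\cal G$ onto $\Delta_{\cal G'}$, hence, as noted above, the barycenter $[T_\cal G]$ onto $[T_{\cal G'}]$, and the same standard fact yields a graph-of-groups isomorphism $F: \cal G \to \cal G'$ with $\hat F = \psi$. Substituting $\hat F = \psi$ into $\hat R = \psi^{-1}\hat R'\psi$ gives $\hat R = \hat F^{-1}\hat R'\hat F$, as required.

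The main obstacle is precisely the identification of the limit simplices in part (2): unlike an efficient 2-level Dehn twist, a root $\hat R$ need not have orbits converging to a single point (the leading coefficient computed via Proposition \ref{conjugacy-limit-growth} varies with the residue class modulo $m$), so one cannot read off $\Delta_\cal G$ directly from the dynamics of $\hat R$. The device that circumvents this is to pass to the common power $\chi = \hat R'^{mm'}$, which is genuinely (conjugate to) a power of an efficient 2-level Dehn twist and therefore \emph{does} have convergent orbits with an intrinsically determined limit simplex; the two factorizations of $\chi$ then pin down $\psi(\Delta_\cal G) = \Delta_{\cal G'}$ through Remark \ref{interior-points}, entirely avoiding the question of whether a power of an efficient 2-level Dehn twist is again efficient. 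The one further point that requires care, recorded in the first paragraph, is that the $\Out(\FN)$-action rescales no edge length, so that barycenters are matched exactly and the resulting $F$ satisfies $\hat F = \psi$ rather than merely carrying the one splitting to the other.
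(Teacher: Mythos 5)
Your proposal is correct and follows essentially the same route as the paper: use the parabolic dynamics of Theorem \ref{parabolic-dynamics} together with the interior-point disjointness of Remark \ref{interior-points} to show that $\phi$ (resp.\ $\psi$) carries the limit simplex $\Delta_{\cal G}$ to itself (resp.\ onto $\Delta_{\cal G'}$), then match barycenters and invoke the standard Bass--Serre fact (Lemma 4.5 of \cite{CL99} or Proposition 4.4 of \cite{Bass93}) to produce the graph-of-groups (auto/iso)morphism inducing the given outer automorphism. Your passage to the common power $\hat R'^{mm'}$ in part (2) is a slightly more careful rendering of the step the paper states loosely as ``$\psi$ maps $H$-orbits to $H'$-orbits for suitable powers of $R$ and $R'$'', and it neatly sidesteps the question of whether a power of an efficient 2-level Dehn twist is again efficient.
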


\begin{proof}
(1) From the hypothesis $\phi^m = \hat H$ we see that $\phi$ permutes the $\hat H$-orbits in $\CVN$, and hence also their limit points. But since by Theorem \ref{parabolic-dynamics} all of the latter are contained in the interior of the simplex $\Delta_\cal G$, it follows from Remark \ref{interior-points} that $\phi$ must map $\Delta_\cal G$ to itself, and hence fix its center point. Hence the claim follows by the exactly the same argument as given in the second paragraph of the proof of Theorem \ref{normal-form}.

\smallskip
\noindent
(2) As in the proof of Theorem \ref{normal-form} and of part (1) above, any automorphism $\psi \in \Out(\FN)$ with $\hat R = \psi^{-1} \hat R' \psi$ must map $H$-orbits in $\CVN$ to $H'$-orbits, where 
$H$ and $H'$ are positive powers of $R$ and $R'$ respectively which 
are efficient 2-level Dehn twists. As in the proof of Theorem \ref{normal-form} we deduce the existence of the desired graph-of-groups isomorphism $F$, and hence the ``only if'' part of the claim. The ``if'' part is again obvious.
\end{proof}

\begin{rem}
\label{conjugacy-split}
${}^{}$
It is well known (see for instance section 2 of \cite{Ye01}) that the relationship between 
graphs-of-groups and their isomorphisms on one hand, and simplicial trees with equivariant homeomorphisms on the other, though conceptually very appealing, is on a technical level not quite as smooth as one would wish.  The ``fault'' lies entirely on the graph-of-groups side, which is technically loaded with data that are non-uniquely determined by the corresponding Bass-Serre trees.

For example, if $T$ is a simplicial tree with $G$-action, and $\cal G$ and $\cal G'$ two graph of groups with $\pi_1 \cal G = \pi_1 \cal G' = G$ and $G$-equivariant identifications $T = T_\cal G = T_{\cal G'}$, then there is indeed a graph-of-groups isomorphism $F: \cal G \to \cal G'$ which induces on the fundamental group the above identification.

Furthermore, if $\tilde H: T \to T$ a homeomorphism which commutes with an automorphism $\Phi: G \to G$ (in the sense that for any $g \in G$ one has $\tilde H g = \Phi(g) \tilde H: T \to T$), then $\tilde H$ descends to graph-of-groups automorphisms $H: \cal G \to \cal G$ and $H': \cal G' \to \cal G'$ which satisfy: 
$$\hat H = \hat F^{-1} \hat H' \hat F$$

However, whether $F$ (or $F$, $H$ and $H'$) can be chosen so that one obtains actually
$$ H = F^{-1} H' F \, ,$$
remains a question to which even under natural additional assumptions (like minimality of $T$) an answer seems in general not to be known.
\end{rem}




















\end{document}